\theoremstyle{plain}
   \newtheorem{theorem}{Theorem}[section]
   \newtheorem{proposition}[theorem]{Proposition}
   \newtheorem{lemma}[theorem]{Lemma}
   \newtheorem{corollary}[theorem]{Corollary}
   \newtheorem{conjecture}[theorem]{Conjecture}
   \newtheorem{problem}[theorem]{Problem}
\theoremstyle{definition}
   \newtheorem{definition}{Definition}[section]
   \newtheorem{question}{Question}[section]
   \newtheorem{example}{Example}[section] 
\theoremstyle{remark}
 \newtheorem{remark}{Remark}[section]
\newcommand{\R}{\mathbb{R}}
\newcommand{\C}{\mathbb{C}}
\newcommand{\Z}{\mathbb{Z}}
\newcommand{\CC}{\mathcal{C}}
\newcommand{\DD}{\mathcal{D}}
\newcommand{\RR}{\mathcal{R}}
\newcommand{\I}{\mathcal{I}}
\newcommand{\HH}{\mathcal{H}}
\newcommand{\A}{\mathcal{A}}
\def\newop#1{\expandafter\def\csname #1\endcsname{\mathop{\rm
#1}\nolimits}}
\keywords{}
\subjclass[2000]{}
\begin{document}
\title{Subdivisions of Shellable Complexes}

\date{\today}

\author{Max Hlavacek}
\address{UC Berkeley, Berkeley, CA, United States}
\email{mhlava@math.berkeley.edu}

\author{Liam Solus}
%\date{\today}
\address{Institutionen f\"or Matematik, KTH, SE-100 44 Stockholm, Sweden}
\email{solus@kth.se}

\begin{abstract}
%In geometric, algebraic, and topological combinatorics, the unimodality and log-concavity of polynomials is frequently studied.  
%A popular approach is to show that the polynomial admits a stronger property; namely, real-rootedness.  
%Currently, many of the open questions on real-rootedness and unimodality of combinatorial polynomials pertain to $f$- and $h$-polynomials, which enumerate the faces of cell complexes.  
%When proving that a polynomial is real-rooted, we often rely on the theory of interlacing polynomials and their recursive nature.  
%In this paper, we relate the recursive nature of interlacing polynomials to the recursive structure, typically termed `shellability,' of cell complexes.  
%We derive a sufficient condition for real-rootedness of the $h$-polynomial of a subdivision of a shellable complex.  
%As applications, we solve two open problems: 
%We positively answer a question posed first by Brenti and Welker, and then again by Mohammadi and Welker, on the barycentric subdivision of a cubical polytope.
%We also give a positive solution to a second problem of Mohammadi and Welker on edgewise subdivisions of cell complexes.

In geometric, algebraic, and topological combinatorics, the unimodality of combinatorial generating polynomials is frequently studied.  
Unimodality follows when the polynomial is (real) stable, a property often deduced via the theory of interlacing polynomials.  
Many of the open questions on stability and unimodality of polynomials pertain to the enumeration of faces of cell complexes.  
%When proving that a polynomial is real-rooted, we often rely on the theory of interlacing polynomials and their recursive nature.  
In this paper, we relate the theory of interlacing polynomials to the shellability of cell complexes.  
We first derive a sufficient condition for stability of the $h$-polynomial of a subdivision of a shellable complex.  
To apply it, we generalize the notion of reciprocal domains for convex embeddings of polytopes to abstract polytopes and use this generalization to define the family of stable shellings of a polytopal complex.  
We characterize the stable shellings of cubical and simplicial complexes, and apply this theory to answer a question of Brenti and Welker on barycentric subdivisions for the well-known cubical polytopes.
%As applications, we solve two open problems: 
%We positively answer a question posed first by Brenti and Welker, and then again by Mohammadi and Welker, on the barycentric subdivision of a cubical polytope.
We also give a positive solution to a problem of Mohammadi and Welker on edgewise subdivisions of cell complexes.
We end by relating the family of stable line shellings to the combinatorics of hyperplane arrangements.  
We pose related questions, answers to which would resolve some long-standing problems while strengthening ties between the theory of interlacing polynomials and the combinatorics of hyperplane arrangements.
\end{abstract}

%%---Subject Classifications-----------
%05E45 - Algebraic Combinatorics, Combinatorial Aspects of Simplicial Complexes - 3 
%05A20 - Enumerative Combinatorics, Combinatorial Inequalities - 4
%52B05 - Convex and Discrete Geometry, Combinatorial Properties of Polytopes and Polyhedra - 2
%52B22 - Convex and Discrete Geometry, Shellability of Polytopes and Polyhedra - 1

%%---Keywords-----
%real-rooted
%shellability
%polytopal complex
%unimodality
%polytope
%subdivision 

\maketitle
\thispagestyle{empty}

%---SECTION: Introduction------------
\section{Introduction}
\label{sec: introduction}
Many endeavors in modern combinatorics aim to derive inequalities that hold amongst a sequence of nonnegative numbers $p_0,\ldots,p_d$ that encode some algebraic, geometric, and/or topological data \cite{Braun16,B89,B94b,B16,S89}. 
These inequalities are typically assigned to the \emph{generating polynomial} $p = p_0+p_1x+\cdots+p_dx^d$ associated to the sequence $p_0,\ldots,p_d$.  
The generating polynomial $p$ is called \emph{unimodal} if there exists $t\in[d]$ such that $p_0\leq \cdots \leq p_t \geq \cdots \geq p_d$.  
It is called \emph{log-concave} if $p_k^2\geq p_{k-1}p_{k+1}$ for all $k\in[d]$, and it is called \emph{real-rooted} (or \emph{(real) stable}) if $p\equiv 0$ or $p$ has only real zeros.  
A classic result states that $p$ is both log-concave and unimodal whenever it is real-rooted \cite[Theorem 1.2.1]{B89}.  
Since real-rootedness is the strongest of these three conditions, many conjectures in the literature ask when certain generating polynomials are not only unimodal or log-concave, but also real-rooted.  
Most proofs of such conjectures rely on interlacing polynomials \cite{B16}, which are inherently tied to recursions associated to the generating polynomials of interest.  

In algebraic, geometric, and topological combinatorics, the generating polynomials of interest are typically the $f$- or $h$-polynomial associated to a cell complex.  
A foundational result in the field, known as the $g$-theorem, implies that the $h$-polynomial associated to the boundary complex of a simplicial polytope is unimodal \cite{S89}.  
In the years following the proof of the $g$-theorem, extensions via the relationship between $h$-polynomials of simplicial complexes and their subdivisions became of interest \cite{A16,S92}.  
In \cite{BW08}, Brenti and Welker significantly strengthen the unimodality result implied by the $g$-theorem for one family of simplicial complexes when they showed that the $h$-polynomial of the barycentric subdivision of the boundary complex of a simplicial polytope is real-rooted.  
%In \cite{BW08}, the unimodality result implied by the $g$-theorem was strengthened significantly for one family of simplicial complexes by Brenti and Welker, who showed that the $h$-polynomial of the barycentric subdivision of a simplicial polytope is real-rooted.  
They then asked if their result generalizes to all polytopes \cite[Question 1]{BW08}.  
In \cite{MW13}, Mohammadi and Welker again raised this question and suggested cubical polytopes as a good starting point.  

In the same way that proofs of real-rootedness via interlacing polynomials often rely on polynomial recursions, proofs pertaining to the geometry of polytopal complexes often make use of the recursive structure of the complex (when it exists).  
This recursive property of polytopal complexes is termed \emph{shellability}. %, and a classic result of Bruggesser and Mani \cite{BM72} states that the boundary complex of a polytope always admits this property.  
In this paper, we relate the recursive structure of interlacing polynomials to the notion of shellability so as to derive a sufficient condition for the $h$-polynomial of a subdivision of a shellable complex to be real-rooted.  
It turns out that, in many cases, this sufficient condition can be applied to the same shelling order of a complex for different subdivisions.  
Shelling orders to which this phenomenon applies are termed \emph{stable shellings} in this paper, and they arise via a generalization of the notion of \emph{reciprocal domains} for convex embeddings of polytopes, as studied by Ehrhart \cite{E67,E67b} and Stanley \cite{S75}.  
By generalizing reciprocal domains to abstract polytopes, we introduce the family of stable shellings for an abstract polytopal complex.  
The stable shellings of both simplicial and cubical complexes are then characterized.
As an application, we recover a positive answer to the question of Brenti and Welker for the well-known families of cubical polytopes; namely, the {\em cuboids} \cite{G67}, the {\em capped cubical polytopes} \cite{J93}, and the {\em neighborly cubical polytopes} \cite{BBC97}.  

The remainder of the paper is structured as follows:  
In Section~\ref{sec: preliminaries}, we develop the necessary preliminaries pertaining to polytopal complexes, interlacing polynomials, and lattice point enumeration.
In Section~\ref{sec: shellable complexes}, we derive a sufficient condition for the real-rootedness of the $h$-polynomial of  a subdivision of a shellable complex (Theorem~\ref{thm: subdivisions of shellable complexes}).  
%In Section~\ref{sec: shellable complexes}, we show that the $h$-polynomial of a subdivision of a shellable complex whose shelling order induces a collection of relative complexes with $h$-polynomials forming an interlacing sequence is real-rooted.  
We then define the family of stable shellings, and we characterize the such shellings for simplicial and cubical complexes. 
In Section~\ref{sec: apps}, we apply the results of Section~\ref{sec: shellable complexes} to deduce that the $h$-polynomial of the barycentric subdivision of the boundary complexes of the aforementioned cubical polytopes are real-rooted. %cubical polytope is real-rooted.  
We then apply these techniques to give an alternative proof of the original result of Brenti and Welker \cite{BW08}, establishing stable shelling methods as a common solution to \cite[Question 1]{BW08} for all known examples. 
We also apply stable shellings to solve a second problem proposed by Mohammadi and Welker \cite{MW13} pertaining to edgewise subdivisions of cell complexes, thereby demonstrating how the same stable shelling can be used to recover real-rootedness results for multiple different subdivisions of a given complex.
In Section~\ref{sec: stable line shellings}, we end by relating the theory of stable line shellings to the combinatorics of hyperplane arrangements and the geometry of realization spaces of polytopes.  
Some questions are proposed; answers to which could potentially settle the question of Brenti and Welker in its fullest generality.
%In Section~\ref{sec: families of cubical polytopes} we investigate some alternative proofs of the problem Brenti, Welker, and Mohammadi for well-studied families of cubical polytopes.  
%Namely, we give an alternative proof for all capped cubical polytopes, and for families of cuboids.  
%We end with a conjecture pertaining to the latter examples and their recursive definition.

%---SECTION: Preliminaries------------
\section{Preliminaries}
\label{sec: preliminaries}
The results in this paper are concerned with the $f$- and $h$-polynomials of polytopal complexes.  
A collection $\CC$ of polytopes is called a {\em polytopal complex} if 
\begin{enumerate}
	\item the empty polytope $\emptyset$ is in $\CC$,
	\item if $P\in\CC$ then all faces of $P$ are also in $\CC$, and if 
	\item if $P,Q\in\CC$, then their intersection $P\cap Q$ is a face of both $P$ and $Q$. 
\end{enumerate}
The elements of $\CC$ are called its {\em faces}, and its maximal faces (with respect to inclusion) are called its {\em facets}.
If all the facets of $\CC$ have the same dimension then $\CC$ is called {\em pure}. 
When all facets of $\CC$ are simplices, we call $\CC$ a {\em simplicial complex}, and when all facets of $\CC$ are cubes, we call $\CC$ a {\em cubical complex}.  
%The {\em dimension} of a polytopal complex $\CC$ is the dimension of the largest polytope contain in $\CC$.  
Note that, in this definition, we do not require our polytopal complex to be embedded in some Euclidean space, but instead treat it as an abstract cell complex.  
Given an abstract polytope $P$, or convex polytope $P\subset\R^n$, we can naturally produce two associated (abstract) polytopal complexes: the complex $\CC(P)$ consisting of all faces in $P$ and the complex $\CC(\partial P)$ consisting of all faces in $\partial P$, the boundary of $P$.  
The {\em facets} of the polytope $P$ are the facets of the complex $\CC(\partial P)$.  
We call $\CC(\partial P)$ the {\em boundary complex} of $P$. 
In a similar fashion, given a collection of polytopes $P_1,\ldots,P_m$, we can define the polytopal complex $\CC(P_1\cup\cdots\cup P_m) = \cup_{i\in[m]}\CC(P_i)$.  
Given a polytopal complex $\CC$, a polytopal complex $\DD$ is called a {\em subcomplex} of $\CC$ if every face of $\DD$ is also a face of $\CC$.  
We refer to the difference $\CC\setminus \DD = \{P\in \CC : P\notin\DD\}$ as a {\em relative (polytopal) complex}, and we define the {\em dimension} of $\CC\setminus\DD$ to be the largest dimension of a polytope in $\CC\setminus\DD$.  
When $\DD = \emptyset$, note that $\CC \setminus \DD = \CC$.  

The {\em $f$-polynomial} of a $(d-1)$-dimensional polytopal complex $\CC$ is the polynomial 
\[
f(\CC;x) := f_{-1}(\CC)+f_0(\CC)x+f_1(\CC)x^2+\cdots+f_{d-1}(\CC)x^d,
\]
where $f_{-1}(\CC):=1$ when $\CC\neq \emptyset$ and $f_k(\CC)$ denotes the number of $k$-dimensional faces of $\CC$ for $0\leq k\leq d-1$.  
Given a subcomplex $\DD$ of $\CC$, the {\em $f$-polynomial} of the relative complex $\CC\setminus\DD$ is then 
\[
f(\CC\setminus\DD;x) := f(\CC;x) - f(\DD;x).
\]
The {\em $h$-polynomial} of the $(m-1)$-dimensional relative complex $\CC\setminus\DD$ is the polynomial
\[
h(\CC\setminus\DD;x):=(1-x)^mf\left(\CC\setminus\DD;\frac{x}{1-x}\right).
\]
We write $h(\CC\setminus\DD;x) = h_0(\CC\setminus\DD)+h_1(\CC\setminus\DD)x+\cdots+h_m(\CC\setminus\DD)x^m$ when expressing $h(\CC\setminus\DD;x)$ in the standard basis, and we similarly write $f(\CC\setminus\DD;x) = f_0(\CC\setminus\DD)+f_1(\CC\setminus\DD)x+\cdots+f_m(\CC\setminus\DD)x^m$.
The following lemma, whose proof is an exercise, relates the $h$-polynomial of a polytopal complex to those of its relative complexes. 
%---LEMMA: Relative Decomposition---
\begin{lemma}
\label{lem: relative decomposition}
Let $\CC$ be a $(d-1)$-dimensional polytopal complex and suppose that $\CC$ can be written as the disjoint union
\[
\CC = \bigsqcup_{i=1}^s\RR_i
\]
where $\RR_i$ are relative $(d-1)$-dimensional polytopal complexes.  Then 
\[
h(\CC;x) = \sum_{i=1}^sh(\RR_i;x).
\]
\end{lemma}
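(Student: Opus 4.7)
The plan is to reduce the lemma about $h$-polynomials to a straightforward additivity statement about $f$-polynomials, and then push this additivity through the defining substitution $x \mapsto x/(1-x)$.

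First, I would verify the $f$-polynomial analogue: if $\CC = \bigsqcup_{i=1}^s \RR_i$ as a disjoint union of relative $(d-1)$-dimensional polytopal complexes, then
\[
f(\CC; x) = \sum_{i=1}^s f(\RR_i; x).
\]
For each $k \geq 0$, the number of $k$-dimensional faces of $\CC$ splits as the sum over $i$ of the number of $k$-faces in $\RR_i$, directly from the partition. The only bookkeeping step is the $k = -1$ coefficient: by the definition $f(\CC_i \setminus \DD_i; x) = f(\CC_i; x) - f(\DD_i; x)$, a relative complex $\RR_i = \CC_i \setminus \DD_i$ satisfies $f_{-1}(\RR_i) = 1$ exactly when $\DD_i = \emptyset$, i.e. when the empty face belongs to $\RR_i$. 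Since the partition of $\CC$ places the empty face in exactly one $\RR_i$, the constant terms add to $f_{-1}(\CC) = 1$, as required.

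Next, I would simply apply the definition of the $h$-polynomial to both sides. Because every $\RR_i$ has the same dimension $d - 1$ as $\CC$, the same prefactor $(1-x)^d$ works uniformly, and
\[
h(\CC; x) = (1-x)^d f\!\left(\CC; \tfrac{x}{1-x}\right) = \sum_{i=1}^{s} (1-x)^d f\!\left(\RR_i; \tfrac{x}{1-x}\right) = \sum_{i=1}^{s} h(\RR_i; x).
\]
The only potential pitfall, and the step I would take the most care with, is confirming that the pure-dimensionality hypothesis on $\CC$ and the $(d-1)$-dimensionality of each $\RR_i$ do indeed match up, so that one does not need to inflate the $f$-polynomial of any $\RR_i$ by an extra factor of $(1-x)$ before multiplying. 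This is precisely why the hypothesis states that all $\RR_i$ are $(d-1)$-dimensional rather than merely of dimension at most $d - 1$; otherwise the $h$-polynomial normalizations would be incompatible and the lemma would fail.
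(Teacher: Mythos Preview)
Your proposal is correct and is exactly the natural argument; the paper itself does not give a proof, stating only that ``the proof is an exercise,'' so there is nothing to compare against beyond noting that your write-up is the intended routine verification. Your attention to the constant term $f_{-1}$ and to the uniform dimension hypothesis are precisely the two points one should flag in such an exercise.
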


%---SUBSECTION: Subdivisions and Local h-Polynomials
\subsection{Subdivisions and local $h$-polynomials}
\label{subsec: subdivisions and local h-polynomials}
Given a polytopal complex $\CC$, a {\em (topological) subdivision} of $\CC$ is a polytopal complex $\CC^\prime$ such that each face of $F\in\CC$ is subdivided into a ball by faces of $\CC^\prime$ such that the boundary of this ball is a subdivision of the boundary of $F$.  
The subdivision is further called {\em geometric} if both $\CC$ and $\CC^\prime$ admit {\em geometric realizations}, $G$ and $G^\prime$, respectively; that is to say, each face of $\CC$ and $\CC^\prime$ is realized by a convex polytope in some real-Euclidean space such that $G$ and $G^\prime$ both have the same underlying set of vertices and each face of $G^\prime$ is contained in a face of $G$. 
When referring to a subdivision $\CC^\prime$ of $\CC$, we may instead refer to its associated inclusion map $\varphi: \CC^\prime\rightarrow\CC$.  
While the main result of this paper applies to general topological subdivisions, the applications of these results will pertain to some special families of subdivisions that are well-studied in the literature.  
These include the barycentric subdivision and the edgewise subdivision of a complex.  

%Aside from $h$-polynomials for relative complexes, we will also make use of another polynomial invariant associated to a subdivided simplex.  
%Let $2^{[d]}$ be an (abstract) $(d-1)$-dimensional simplex, and let $\varphi:\CC\rightarrow2^{[d]}$ be a subdivision of $2^{[d]}$.  
%For a face $F\in2^{[d]}$ (i.e., a subset of $[d]$), we let $\CC_F:=\varphi^{-1}(2^F)$ denote the subdivision of $2^{[d]}$ by $\CC$ restricted to $F$.  
%The {\em local $h$-polynomial} of $2^{[d]}$ with respect to $\CC$ is then defined to be
%\[
%\ell_{[d]}(\CC_{[d]};x) := \sum_{F\subset[d]}(-1)^{d-|F|}h(\CC_{F};x).
%\]
%Stanley gave the following expression of the $h$-polynomial of a simplicial subdivision of a simplicial complex in terms of local $h$-polynomials:
%
%\begin{theorem}\cite[Theorem 3.2]{S92}
%\label{thm: local h}
%Let $\CC$ be a pure $d$-dimensional simplicial complex and let $\CC^\prime$ be a simplicial subdivison of $\CC$. 
%\[
%h(\CC^\prime; x) = \sum_{\Delta \in \CC} h(\link_\CC(\Delta);x)\ell_\Delta(\CC^\prime_\Delta;x).
%\]
%\end{theorem}
%
%In Theorem~\ref{thm: local h}, the complex $\link_\CC(\Delta)$, which is called the {\em link} of $\Delta$ in $\CC$, is the simplicial complex formed from the simplices in $\CC$ that are disjoint from $\Delta$ but are contained in a face of $\CC$ that also contains $\Delta$.

%---SUBSECTION: Interlacing Polynomials
\subsection{Interlacing polynomials}
\label{subsec: interlacing polynomials}
Two real-rooted polynomials $p,q\in\R[x]$ are said to {\em interlace} if there is a zero of $p$ between each pair of zeros of $q$ (counted with multiplicity) and vice versa.  
If $p$ and $q$ are interlacing, it follows that the {\em Wronskian} $W[p,q] = p^\prime q- pq^\prime$ is either nonpositive or nonnegative on all of $\R$. 
We will write $p\prec q$ if $p$ and $q$ are real-rooted, interlacing, and the Wronskian $W[p,q]$ is nonpositive on all of $\R$.  
We also assume that the zero polynomial $0$ is real-rooted and that $0\prec p$ and $p\prec 0$ for any real-rooted polynomial $p$. 
%---REMARK: interlacing----
\begin{remark}
\label{rmk: interlacing}
Notice that if the signs of the leading coefficients of two real-rooted polynomials $p$ and $q$ are both positive, then $p\prec q$ if and only if
\[
\cdots \leq \beta_2 \leq \alpha_2\leq \beta_1 \leq \alpha_1,
\]
where $\ldots,\beta_2,\beta_1$ and $\ldots,\alpha_2,\alpha_1$ are the zeros of $p$ and $q$, respectively. 
In particular, when we work with combinatorial generating polynomials, $p\prec q$ is equivalent to $p$ and $q$ being real-rooted and interlacing.  
\end{remark}

A polynomial $p \in \C[x]$ is called {\em stable} if $p$ is identically zero or if all of its zeros have nonpositive imaginary parts. 
The Hermite-Biehler Theorem relates the relation $p\prec q$ to stability in such a way that we can derive some useful tools for proving results about interlacing polynomials:
%---THEOREM: Hermite-Biehler---
\begin{theorem}\cite[Theorem 6.3.4]{RG02}
\label{thm: hermite-biehler}
If $p,q\in\R[x]$ then $p\prec q$ if and only if $q+ip$ is stable.
\end{theorem}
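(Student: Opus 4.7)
The plan is to analyze both implications through the rational function $g := q/p$, after reducing to the coprime case. Any common zero of $p$ and $q$ is necessarily real (in the forward direction because $p$ is real-rooted; in the reverse direction because such a zero would lie in both $f$ and $\bar{f}$, which sit in opposite closed half planes), so it can be divided out without affecting $p \prec q$ or stability of $q + ip$; degenerate cases such as $p \equiv 0$ are handled by direct inspection using the conventions stated above.

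For the forward direction, suppose $p \prec q$. The hypothesis $W[p,q] \leq 0$ on $\R$ is equivalent to $(q/p)'(x) \geq 0$ off the zero set of $p$. Combined with real-rootedness and simplicity of the zeros $\beta_1, \dots, \beta_n$ of $p$ (which follow from the interlacing hypothesis after coprimalization), this yields a partial fraction expansion
\[
g(x) \;=\; Ax + B + \sum_k \frac{c_k}{x - \beta_k}
\]
with $A \geq 0$ and $c_k \leq 0$. For $z = x+iy$ with $y > 0$,
\[
\mathrm{Im}\, g(z) \;=\; Ay \;-\; y \sum_k \frac{c_k}{|z - \beta_k|^2} \;\geq\; 0,
\]
so $g(z) \neq -i$ on the open upper half plane. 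Since $p$ is real-rooted, $p(z) \neq 0$ there, hence $q(z) + ip(z) = p(z)\bigl(g(z) + i\bigr) \neq 0$ throughout $\{\mathrm{Im}(z) > 0\}$, which is precisely stability of $q + ip$.

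For the reverse direction, suppose $f := q + ip$ is stable. First, I would extract $W[p,q] \leq 0$ on $\R$ directly from the identity
\[
\frac{f'(x)}{f(x)} \;=\; \frac{q'q + p'p + i\, W[p,q]}{p^2 + q^2},
\]
whose imaginary part must be nonpositive on the real line, since $f'(x)/f(x) = \sum (x - z_j)^{-1}$ and each zero $z_j$ of $f$ has $\mathrm{Im}(z_j) \leq 0$. To secure real-rootedness of $q$, I would use the homotopy $f_t := q + itp$ for $t \in [0,1]$: at $t = 1$, $f_1 = f$ is stable, and continuity of zeros in $t$ combined with the observation that any real-axis crossing $(t_0, x_0)$ with $t_0 > 0$ would force $q(x_0) = p(x_0) = 0$ (contradicting coprimality) ensures $f_t$ stays stable throughout $t \in (0, 1]$. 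Letting $t \to 0^+$ places all zeros of $q = f_0$ in the closed lower half plane, whence they are real since $q \in \R[x]$. An analogous homotopy $f_s := sq + ip$ delivers real-rootedness of $p$. Finally, with $p$ real-rooted and $W[p,q] \leq 0$ in hand, the partial fraction analysis from the forward direction runs in reverse to supply the interlacing of the zeros of $q$ with those of $p$, completing $p \prec q$.

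The main obstacle is the reverse direction, specifically the extraction of real-rootedness of both $p$ and $q$ from stability of the single polynomial $f$. The homotopy argument demands care when $\deg p \neq \deg q$, since one zero of $f_t$ may escape to infinity as $t \to 0^+$; however, the absence of real-axis crossings for $t \in (0, 1]$ forces the escape to happen through the closed lower half plane, so the limiting argument still goes through. Coprimality of $p$ and $q$ is the essential ingredient throughout.
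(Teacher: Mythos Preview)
The paper does not prove this theorem; it is quoted without proof as \cite[Theorem 6.3.4]{RG02}, so there is no in-paper argument to compare against. Your proposal is a correct and standard proof of the Hermite--Biehler theorem via the Nevanlinna/Pick-type analysis of the rational function $g=q/p$: the forward direction is clean, and in the reverse direction the extraction of $W[p,q]\le 0$ from $\mathrm{Im}(f'/f)\le 0$ on $\R$ together with the homotopy $f_t=q+itp$ (and its companion $sq+ip$) is exactly the right mechanism. The only place that is slightly compressed is the closing line ``the partial fraction analysis runs in reverse'': to finish, you should make explicit that $W[p,q]\le 0$ forces each residue $c_k=q(\beta_k)/p'(\beta_k)$ to be strictly negative (hence the zeros of $p$ are simple once $p,q$ are coprime), so that $g$ increases monotonically from $-\infty$ to $+\infty$ on each interval $(\beta_k,\beta_{k+1})$ and therefore $q$ has exactly one zero there; the boundary intervals and the degree bound $|\deg p-\deg q|\le 1$ then fall out of the behavior of $A$ at infinity. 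With that sentence added, the argument is complete.
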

Remark~\ref{rmk: interlacing} and Theorem~\ref{thm: hermite-biehler} allow us to quickly derive some useful results.
%---LEMMA: Basic Facts----
\begin{lemma}
\label{lem: basic facts}
If $p$ and $q$ are real-rooted polynomials in $\R[x]$ then
\begin{enumerate}
	\item $p\prec \alpha p$ for all $\alpha\in\R$,
	\item $p\prec q$ if and only if $\alpha p\prec \alpha q$ for any $\alpha\in\R\setminus\{0\}$,
	\item $p\prec q$ if and only if $-q\prec p$, and \label{eqn: condition1}
	\item if $p$ and $q$ have positive leading coefficients then $p\prec q$ if and only if $q\prec xp$.  \label{eqn: times x}
\end{enumerate}
\end{lemma}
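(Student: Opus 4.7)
The plan is to derive all four statements using the Hermite--Biehler Theorem (Theorem~\ref{thm: hermite-biehler}), which recasts the interlacing relation $p \prec q$ as the stability of $q + ip \in \C[x]$, together with the zero-ordering characterization in Remark~\ref{rmk: interlacing}. The first three parts follow by direct manipulation of the associated complex polynomial, while the fourth requires a more careful analysis of how the extra zero $0$ of $xp$ interacts with the zero-sequence of $q$.

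For part (1), I compute $W[p,\alpha p] = \alpha(p'p - pp') = 0$; since $p$ and $\alpha p$ share the same zero set (and the degenerate cases $\alpha = 0$ or $p = 0$ are handled by the convention that $0 \prec p$ and $p \prec 0$), the Wronskian and interlacing conditions are both immediate. For part (2), Hermite--Biehler gives $\alpha p \prec \alpha q$ iff $\alpha q + i\alpha p = \alpha(q+ip)$ is stable; since multiplication by a nonzero $\alpha \in \R$ does not change the zero set, this holds iff $q + ip$ is stable, i.e., iff $p \prec q$. For part (3), the identity $p - iq = -i(q + ip)$ shows that $p + i(-q)$ and $q + ip$ have identical zero sets, so by Hermite--Biehler $-q \prec p$ iff $p \prec q$.

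Part (4) is the substantive case. The strategy is to unfold both sides using Remark~\ref{rmk: interlacing}. Writing the zeros of $p$ and $q$ as $\beta_m \leq \cdots \leq \beta_1$ and $\alpha_n \leq \cdots \leq \alpha_1$, respectively, the condition $p \prec q$ becomes the chain $\cdots \leq \beta_2 \leq \alpha_2 \leq \beta_1 \leq \alpha_1$. The zeros of $xp$ are the zeros of $p$ together with the new zero $0$. In the combinatorial setting that motivates the paper, where the polynomials have nonnegative coefficients and hence only nonpositive zeros, $0$ sits at the top of the zero-sequence of $xp$. Expanding the interlacing condition $q \prec xp$ via Remark~\ref{rmk: interlacing} therefore produces the same chain of inequalities as $p \prec q$, augmented only by the automatically satisfied inequality $\alpha_1 \leq 0$. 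The main obstacle will be the careful bookkeeping when zeros coincide or when the degrees of $p$ and $q$ differ, so that no inequality silently reverses at an endpoint of the chain; these boundary cases are handled by a short case analysis guided by the standard conventions on interlacing of polynomials of differing degrees.
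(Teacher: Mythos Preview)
The paper gives no explicit proof of this lemma, stating only that Remark~\ref{rmk: interlacing} and Theorem~\ref{thm: hermite-biehler} ``allow us to quickly derive'' these facts; your proposal follows exactly this indicated route---Hermite--Biehler for (1)--(3), the zero-ordering characterization for (4)---and the arguments are correct. Your caution in part (4), restricting to polynomials with nonpositive zeros so that the new zero $0$ of $xp$ sits at the top of the zero sequence, is well-placed and matches the only context in which the paper invokes this part of the lemma.
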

%
%\begin{equation}
%\label{eqn: condition1}
%p\prec q
%\qquad
%\text{if and only if}
%\qquad 
%-q\prec p.
%\end{equation}
%We also note that if $p$ and $q$ have only nonpositive zeros then 
%\begin{equation}
%\label{eqn: times x}
%p\prec q
%\qquad
%\text{if and only if}
%\qquad 
%q\prec xp.
%\end{equation}
We will also require the following proposition.
%---PROPOSITION: Convex cones----
\begin{proposition}
\cite[Lemma 2.6]{BB}
\label{prop: convex cones}
Let $p$ be a real-rooted polynomial that is not identically zero.  
Then the following two sets are convex cones:
\[
\{q\in\R[x] : p\prec q\}
\quad
\text{and}
\quad
\{q\in\R[x] : q\prec p\}.
\]
\end{proposition}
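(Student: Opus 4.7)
The plan is to first reduce the two cones to one: by Lemma~\ref{lem: basic facts}(3), $q \prec p$ is equivalent to $-p \prec q$, so $\{q : q \prec p\} = \{q : -p \prec q\}$, and it suffices to show that $\{q \in \R[x] : p \prec q\}$ is a convex cone for any real-rooted $p \neq 0$. Closure under nonnegative scaling is immediate from the Wronskian identity $W[p, \alpha q] = \alpha\, W[p, q]$ together with the convention that $p \prec 0$.

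For closure under sums, suppose $p \prec q_1$ and $p \prec q_2$. By Theorem~\ref{thm: hermite-biehler}, the polynomials $q_1 + ip$ and $q_2 + ip$ are stable. Consider the linear family
\[
F_\lambda(x) = \bigl(\lambda q_1(x) + (1-\lambda) q_2(x)\bigr) + i\, p(x), \qquad \lambda \in [0,1],
\]
every member of which has imaginary part $p$. Since the set of stable complex polynomials of bounded degree is closed under locally uniform limits (Hurwitz's theorem), the key point is to forbid any zero of $F_\lambda$ from crossing the real axis into the open upper half-plane as $\lambda$ varies. Any such crossing would force some $\lambda^* \in [0,1]$ and a real $x_0$ with $F_{\lambda^*}(x_0) = 0$; splitting real and imaginary parts yields $p(x_0) = 0$ together with
\[
\lambda^* q_1(x_0) + (1-\lambda^*) q_2(x_0) = 0.
\]

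The heart of the argument rules this out via the interlacing hypothesis. After reducing to the case where $p$ has only simple zeros disjoint from the zeros of $q_1$ and $q_2$, we use that at any simple zero $x_0$ of $p$,
\[
W[p, q_i](x_0) = p'(x_0)\, q_i(x_0) \leq 0 \quad (i=1,2),
\]
so $q_1(x_0)$ and $q_2(x_0)$ are both nonzero and share a common sign (opposite to $p'(x_0)$); no convex combination with coefficients in $(0,1)$ can then vanish, and the endpoint cases $\lambda^* \in \{0,1\}$ are ruled out directly by the disjointness of zeros. Consequently $F_\lambda$ is stable throughout $[0,1]$, and evaluating at $\lambda = 1/2$ together with Theorem~\ref{thm: hermite-biehler} and Lemma~\ref{lem: basic facts}(2) yields $p \prec q_1 + q_2$.

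The main obstacle is the reduction to the generic case together with degree bookkeeping for the Hurwitz argument: one must verify that common roots of $p$ and some $q_i$ (in particular, multiple roots of $p$, which are forced to be roots of each $q_i$ by the Wronskian inequality $W[p, q_i] \leq 0$) can be factored out while preserving the relations $p \prec q_i$, and one must confirm that leading-coefficient cancellations along the family $F_\lambda$ do not allow zeros to escape to infinity. These are standard but require care; the continuity argument itself is clean once one is in the generic setting.
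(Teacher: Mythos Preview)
The paper does not supply its own proof of this proposition; it is quoted from \cite[Lemma~2.6]{BB} and used as a black box. So there is no argument in the paper to compare against directly.

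Your outline is sound. The reduction of the second cone to the first via Lemma~\ref{lem: basic facts}\eqref{eqn: condition1} and the scaling closure are correct. In the generic case your Hurwitz argument works: the family $F_\lambda$ has no real zeros, the zeros start in the open lower half-plane at $\lambda=0$, and continuity then keeps them there. One small slip: your appeal to Lemma~\ref{lem: basic facts}(2) at the end does not give $p\prec \tfrac12(q_1+q_2)\Rightarrow p\prec q_1+q_2$, since that lemma scales $p$ and $q$ simultaneously; but the implication follows immediately from the positive-scaling closure you already established. The reduction to the generic case and the control of zeros escaping to infinity are, as you say, routine---though they are where all the work in your approach actually lives.

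For comparison, the argument in Borcea--Br\"and\'en is much shorter and avoids both the genericity reduction and the degree bookkeeping. The key is the refinement of Hermite--Biehler that characterizes $p\prec q$ (for real-rooted $p\neq 0$) by the condition $\mathrm{Im}\bigl(q(z)/p(z)\bigr)\geq 0$ for all $z$ in the open upper half-plane; this can be read off from the partial-fraction expansion of $q/p$, whose residues $q(\beta_j)/p'(\beta_j)$ are nonpositive precisely because $W[p,q](\beta_j)=p'(\beta_j)q(\beta_j)\leq 0$. Once this is in hand, closure under addition is a single line: $\mathrm{Im}\bigl((q_1+q_2)/p\bigr)=\mathrm{Im}(q_1/p)+\mathrm{Im}(q_2/p)\geq 0$. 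Your route recovers the same conclusion but trades this analytic characterization for a homotopy of stable polynomials, at the cost of the perturbation and compactness arguments you flag.
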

%We will also make use of the notion of an interlacing sequence of polynomials.

It follows from Proposition~\ref{prop: convex cones} that we can sum a pair of interlacing polynomials to produce a new polynomial with only real-roots.  
More generally, we will work with recursions for which we need to sum several polynomials to produce a new real-rooted polynomial.  
Let $(p_i)_{i=0}^s = (p_0,\ldots,p_s)$ be a sequence of real-rooted polynomials.  
We say that the sequence of polynomials $(p_i)_{i=0}^s$ is an {\em interlacing sequence} if $p_i\prec p_j$ for all $1\leq i\leq j\leq s$.  
Note that, by Proposition~\ref{prop: convex cones}, any convex combination of polynomials in an interlacing sequence is real-rooted.  

%Recently, \cite{BS19} characterized when a polynomial interlaces its own reciprocal, and used this characterization to settle some conjectures on real-rootedness of generating polynomials.  
For a polynomial $p\in\R[x]$ of degree at most $d$, we let $\I_d(p) := x^dp(1/x)$.
When $d$ is the degree of $p$, then $\I_d(p)$ is the reciprocal of $p$.  
A polynomial $p = p_0+p_1x+\cdots+p_dx^d\in\R[x]$ is called {\em symmetric} with respect to degree $d$ if $p_k = p_{d-k}$ for all $k = 0,\ldots, d$.  
If $p$ is a degree $d$ generating polynomial that is both real-rooted and symmetric with respect to $d$ then $\I_d(p)\prec p$.  
However, non-symmetric polynomials also satisfy the latter condition, making it a natural generalization of symmetry for real-rooted polynomials.  
In \cite{BS19}, the authors characterized the condition $\I_d(p)\prec p$ in terms of the {\em symmetric decomposition} of $p$, which has been of recent interest \cite{A14,A17,AS13,BJM16,BR07,BS19,S19}.
%If $p\in\R[x]$ is a polynomial of degree at most $d$, then there exist unique polynomials $a,b\in\R[x]$ satisfying
%\[
%\I_d(a) = a,
%\qquad
%\I_{d-1}(b) = b, 
%\quad
%\text{and}
%\quad 
%p = a+xb.
%\]
%%\begin{enumerate}
%%	\item $\I_d(a) = a$,
%%	\item $\I_{d-1}(b) = b$, and 
%%	\item $p = a+xb$.
%%\end{enumerate}
%We call the ordered pair $(a,b)$ the {\em $\I_d$-decomposition} of $p$, and any such decomposition (for any $d$) is termed a symmetric decomposition of $p$.  
%In \cite{BS19} the authors proved the following characterization of the condition $\I_d(p)\prec p$.
%%---THEOREM: Interlacing Reciprocals---
%\begin{theorem}
%\cite[Theorem 2.7]{BS19}
%Let $p\in\R[x]$ have degree at most $d$, and suppose $p$ has $\I_d$-decomposition $(a,b)$ where $a$ and $b$ both have only nonnegative coefficients.  Then $\I_d(p)\prec p$ if and only if $b\prec a$. 
%\end{theorem}
%%-----------------------------------------------------
%If the polynomials $(a,b)$ have only nonnegative coefficients then $a$ and $b$ being unimodal implies that $p$ is {\em alternatingly increasing}; that is $p = p_0+p_1x+\cdots+p_dx^d$ satisfies the inequalities
%\[
%p_0\leq p_d\leq p_1\leq p_{d-1}\leq p_2\leq p_{d-2}\leq \cdots.
%\]
%The alternatingly increasing property has been of recent interest in algebraic, geometric, and topological combinatorics (see, for instance, \cite{XXX,XXX,XXX,XXX}).  
In this paper, the polynomials that we aim to show have only real zeros are known to be symmetric with respect to their degree.  
However, we will make use of the more general phenomenon $\I_d(p)\prec p$ in some of the proofs.  

%---SUBSECTION: Ehrhart Theory
\subsection{Ehrhart Theory}
\label{subsec: ehrhart theory}
In Section~\ref{sec: apps}, we will use some techniques originating from discrete geometry and the theory of lattice point enumeration in convex polytopes.  
A subset $P\subset\R^n$ is called a {\em $d$-dimensional lattice polytope} if it is the convex hull of finitely many points in $\Z^n$ whose affine span is a $d$-dimensional affine subspace of $\R^n$.  
For $t\in\Z_{>0}$ we call $tP:=\{tp\in\R^n : p\in P\}$ the {\em $t^{th}$ dilate of $P$}, and we call the function $i(P;t) := |tP\cap\Z^n|$ the {\em Ehrhart function} of $P$.  
In an analogous fashion, we can let $\HH = \{\langle a,y\rangle = b\}$ be a subset of the facet-defining hyperplanes of $P$ and set 
\[
S_\HH := \{z\in\R^n : \langle a,z\rangle = b \text{ for some $\langle a,y\rangle = b\in\HH$}\}.
\]
We then define the {\em half-open polytope} $P\setminus S_\HH$, which we may denote by $P\setminus \HH$ when we need to highlight the facet-defining hyperplanes that capture the points in $S_\HH$.  
As before, the $t^{th}$ dilate of $P\setminus S_\HH$ is $t(P\setminus S_\HH):=\{tp\in\R^n : p\in P\setminus S_\HH\}$ and the Ehrhart function of $P\setminus S_\HH$ is defined to be $i(P\setminus S_\HH;t) := |t(P\setminus S_\HH)\cap\Z^n|$ for $t>0$ (see \cite[Section 5.3]{BS18}).
Notice that if $\HH = \emptyset$ then $P = P\setminus S_\HH$, and if $\HH$ is the complete collection of facet-defining hyperplanes of $P$ then $P\setminus S_\HH =: P^\circ$, the relative interior of $P$.  
The {\em Ehrhart series} of the relative interior of $P$ is defined as 
\[
\Ehr_{P^\circ}(x) :=\sum_{t>0}i(P^\circ;t)x^t.
\]
In the case that $\HH$ is not the complete set of facet-defining hyperplanes of $P$, the {\em Ehrhart series} of $P\setminus S_\HH$ is defined as
\[
\Ehr_{P\setminus S_\HH}(x) :=\sum_{t\geq0}i(P\setminus S_\HH;t)x^t,
\]
and the constant term is computed to be the Euler characteristic of $P\setminus S$ (see \cite[Theorem 5.1.8]{BS18}).  
When written in a closed rational form, the Ehrhart series of $P\setminus S_\HH$, for any choice of $\HH$, is 
\[
\Ehr_{P\setminus S_\HH}(x) = \frac{h^\ast_0+h^\ast_1x+\cdots+h^\ast_dx^d}{(1-x)^{d+1}},
\]
and the polynomial $h^\ast(P\setminus S_\HH;x) := h^\ast_0+h^\ast_1x+\cdots+h^\ast_dx^d$ is called the {\em (Ehrhart) $h^\ast$-polynomial} of $P\setminus S_\HH$. 
It is well-known that $h^\ast(P\setminus S_\HH;x)$ has only nonnegative integral coefficients (see for instance \cite{JS15}).
Since a lattice polytope $P$ is a subset of $\R^n$ with vertices in $\Z^n$, it is natural to consider its subdivisions into polyhedral complexes whose $0$-dimensional faces correspond to the lattice points in $P\cap\Z^n$.
When such a (geometric) subdivision consists of only simplices, we call it a {\em triangulation} of $P$.  
When each simplex $\Delta$ in a triangulation of $P$ has $h^\ast(\Delta;x) = 1$, we call it a {\em unimodular triangulation} of $P$.  
The following lemma is also well-known, and a proof appears in \cite[Chapter 10]{BR07}.
%---LEMMA: whole polytope----
\begin{lemma}
\label{lem: whole polytope}
Let $P\subset\R^n$ be a $d$-dimensional lattice polytope and let $T$ be a unimodular triangulation of $P$.  
Then
\[
h^\ast(P;x) = h(T;x).
\]
\end{lemma}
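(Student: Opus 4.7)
\emph{Proof plan.} I will compute $h^\ast(P;x)$ via a half-open decomposition of $P$ induced by $T$ and match the result term-by-term with the expansion of $h(T;x)$ produced by Lemma~\ref{lem: relative decomposition}.

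The first step is to establish the basic building block: if $\Delta$ is a $d$-dimensional unimodular simplex and $\HH$ is a choice of $k$ of its $d+1$ facet-defining hyperplanes, then an affine change of coordinates to the standard simplex followed by an elementary lattice-point count gives
\[
\Ehr_{\Delta\setminus S_\HH}(x) \;=\; \frac{x^k}{(1-x)^{d+1}},
\]
and hence $h^\ast(\Delta\setminus S_\HH;x) = x^k$; in particular $h^\ast(\Delta;x) = 1$.

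The second step is to fix a shelling order $\Delta_1,\ldots,\Delta_s$ of the facets of $T$; such a shelling exists because triangulations of a convex polytope can be line-shelled, and alternatively one may take the visibility order induced by a generic point in the interior of $P$. For each $i$, let $R_i$ denote the set of facets of $\Delta_i$ contained in $\Delta_1\cup\cdots\cup\Delta_{i-1}$ and let $\HH_i$ be the corresponding facet-defining hyperplanes of $\Delta_i$. The shelling property then yields the disjoint geometric decomposition
\[
P \;=\; \bigsqcup_{i=1}^{s}\bigl(\Delta_i\setminus S_{\HH_i}\bigr),
\]
and additivity of lattice-point counts, combined with the building block from the first step, gives $h^\ast(P;x) = \sum_{i=1}^{s} x^{|R_i|}$.

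On the combinatorial side, the same shelling produces the restriction decomposition $T = \bigsqcup_i \RR_i$, where $\RR_i$ is the relative complex consisting of the faces of $\Delta_i$ not contained in any earlier $\Delta_j$. A short direct calculation from the definition of the $h$-polynomial shows $h(\RR_i;x) = x^{|R_i|}$, so Lemma~\ref{lem: relative decomposition} yields $h(T;x) = \sum_i x^{|R_i|}$, matching the formula above for $h^\ast(P;x)$. The main obstacle is to ensure that the \emph{same} data $\{R_i\}$ simultaneously governs both the half-open geometric decomposition of $P$ and the abstract restriction decomposition of $T$; this is precisely what the shelling (or its generic-point substitute) buys, and once it is in hand the identity $h^\ast(P;x) = h(T;x)$ reduces to the common combinatorial sum $\sum_i x^{|R_i|}$.
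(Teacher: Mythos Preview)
Your argument has a genuine gap: the claim that ``triangulations of a convex polytope can be line-shelled'' is false. Line shellings apply to boundary complexes of polytopes, not to triangulations of solid polytopes, and more to the point there exist nonshellable geometric triangulations of convex $3$-balls (Rudin's classical example). So as written, step two fails for a general unimodular triangulation $T$. Your parenthetical alternative---the generic interior point---does rescue the argument, but not in the way you phrase it: a generic $q\in P^\circ$ does not produce a shelling \emph{order}; it produces a half-open decomposition directly. You must redefine $R_i$ not as ``facets of $\Delta_i$ contained in earlier $\Delta_j$'' but as ``facets of $\Delta_i$ visible from $q$,'' i.e.\ those whose supporting hyperplane separates $q$ from $\Delta_i^\circ$. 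With that definition one has the disjoint union $P=\bigsqcup_i(\Delta_i\setminus S_{\HH_i})$ and the matching face partition $T=\bigsqcup_i\RR_i$ without any ordering, and the rest of your argument (the building block $h^\ast=x^{|R_i|}$, Lemma~\ref{lem: relative decomposition}, and the common sum $\sum_i x^{|R_i|}$) goes through.

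Once repaired, your route is genuinely different from the paper's. The paper does not prove Lemma~\ref{lem: whole polytope} directly but cites \cite[Chapter~10]{BR07}, and the method---visible in the paper's proof of the closely analogous Lemma~\ref{lem: not whole polytope}---is to decompose $P$ into the \emph{open} simplices $\Delta^\circ$ of \emph{all} dimensions (one per face of $T$), use $\Ehr_{\Delta^\circ}(x)=x^{\dim\Delta+1}/(1-x)^{\dim\Delta+1}$ for each, and sum to obtain the $f$-polynomial of $T$ directly. That approach requires no genericity or half-open machinery and is entirely order-free from the start; yours trades that simplicity for working only with full-dimensional pieces, which is closer in spirit to the shelling framework used elsewhere in the paper.
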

We will need a slight generalization of Lemma~\ref{lem: whole polytope}, whose proof is analogous to that of Lemma~\ref{lem: whole polytope}.  However, we provide it below for the sake of completeness.
In the following, given a facet-defining hyperplane $H$ of $P$, we will let $F_H$ denote the facet of $P$ defined by $H$.  
%---LEMMA: not whole polytope----
\begin{lemma}
\label{lem: not whole polytope}
Let $P\subset\R^n$ be a $d$-dimensional lattice polytope with a unimodular triangulation $T$, and let $\HH$ be a subset of its facet-defining hyperplanes.  
If the Euler characteristic of $P\setminus S_\HH$ is $0$ then 
\[
h^\ast(P\setminus S_\HH;x) = h(T\setminus(\cup_{H\in \HH}T{\big|}_{F_H});x).
\]
\end{lemma}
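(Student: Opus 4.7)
The plan is to parallel the proof of Lemma~\ref{lem: whole polytope} by computing both sides face-by-face via the topological decomposition of $P$ into the relative interiors of the faces of $T$. First I would use that $P = \bigsqcup_{\sigma \in T,\, \sigma \neq \emptyset} \sigma^\circ$, and observe that for any face $F_H$ of $P$ and any face $\sigma$ of $T$, either $\sigma \subseteq F_H$ (in which case $\sigma^\circ \subseteq F_H$) or $\sigma \not\subseteq F_H$ (in which case $\sigma^\circ \cap F_H = \emptyset$). Therefore $\sigma^\circ \subseteq P \setminus S_\HH$ if and only if $\sigma \not\subseteq F_H$ for every $H \in \HH$, which is precisely the condition that $\sigma$ belongs to the relative complex $T \setminus \bigcup_{H \in \HH} T|_{F_H}$.

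Next I would apply the standard Ehrhart computation for an open unimodular $k$-simplex, $\Ehr_{\sigma^\circ}(x) = x^{k+1}/(1-x)^{k+1}$. Each $\Ehr_{\sigma^\circ}(x)$ has zero constant term by the convention that open-polytope Ehrhart series are summed over $t > 0$, and the hypothesis $\chi(P \setminus S_\HH) = 0$ ensures the same for $\Ehr_{P \setminus S_\HH}(x)$. Summing over the contributing faces yields
\[
\Ehr_{P \setminus S_\HH}(x) \;=\; \sum_\sigma \frac{x^{\dim \sigma + 1}}{(1-x)^{\dim \sigma + 1}},
\]
where $\sigma$ ranges over the nonempty faces of $T \setminus \bigcup_H T|_{F_H}$, and multiplying by $(1-x)^{d+1}$ gives
\[
h^\ast(P \setminus S_\HH; x) \;=\; \sum_\sigma x^{\dim \sigma + 1}(1-x)^{d - \dim \sigma}.
\]

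For the right-hand side of the lemma, since $\emptyset$ lies in $T|_{F_H}$ for every $H \in \HH$, the $f$-polynomial of the relative complex has no $f_{-1}$ contribution and equals $f(T \setminus \bigcup_H T|_{F_H}; x) = \sum_\sigma x^{\dim \sigma + 1}$ over the same index set. The definition of the $h$-polynomial then produces $\sum_\sigma x^{\dim \sigma + 1}(1-x)^{d - \dim \sigma}$ after the substitution $x \mapsto x/(1-x)$ and multiplication by $(1-x)^{d+1}$, matching the Ehrhart side and completing the proof. The main obstacle is the bookkeeping around the empty face and verifying that the Euler-characteristic hypothesis is exactly the condition that eliminates what would otherwise be a constant-term discrepancy between the two sides; without $\chi(P \setminus S_\HH) = 0$ an extra constant on the Ehrhart side would have no combinatorial counterpart and the identity would fail.
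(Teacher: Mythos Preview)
Your proposal is correct and follows essentially the same route as the paper: decompose $P\setminus S_\HH$ as the disjoint union of the open simplices $\sigma^\circ$ with $\sigma$ in the relative complex, use $\Ehr_{\sigma^\circ}(x)=x^{\dim\sigma+1}/(1-x)^{\dim\sigma+1}$ for unimodular $\sigma$, and invoke the Euler-characteristic hypothesis to match constant terms. The paper presents the same computation, citing Ehrhart--MacDonald reciprocity for the open-simplex formula and organizing the final sum as in the proof of \cite[Theorem~10.3]{BR07}; your explicit remark that $\emptyset$ is removed from the relative complex (since $\HH\neq\emptyset$ under the hypothesis) is a nice clarification of the $f_{-1}$ bookkeeping that the paper leaves implicit.
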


\begin{proof}
To prove the claim, we first write $P\setminus S_\HH$ as a disjoint union of the (nonempty) open simplices in the relative complex $T\setminus(\cup_{H\in S_\HH}T{\big|}_{F_H})$:
\[
P\setminus S_\HH = \bigsqcup_{\Delta\in T\setminus(\cup_{H\in S_\HH}T{|}_{F_H})}\Delta^\circ,
\]
and we note that
\[
i(P\setminus S_\HH; t) = \sum_{\Delta\in T\setminus(\cup_{H\in S_\HH}T{|}_{F_H})}i(\Delta^\circ;t).
\]   
%This decomposition and the equality of these polynomials is a mild generalization of \cite[Problem 5.3]{BR07}.
It then follows that 
\begin{equation*}
\begin{split}
\Ehr_{P\setminus S_\HH}(x) 
& = \sum_{n\geq 0}i(P\setminus S_\HH)x^t,	\\
%& = \sum_{n\geq 0}\left(\sum_{\Delta\in T\setminus(\cup_{H\in S_\HH}T{|}_{F_H})}i(\Delta^\circ;t)\right)x^t,	\\
& = i(P\setminus S_\HH; 0) +\sum_{n > 0}\left(\sum_{\Delta\in T\setminus(\cup_{H\in S_\HH}T{|}_{F_H})}i(\Delta^\circ;t)\right)x^t,\\
& = i(P\setminus S_\HH; 0) +\sum_{\Delta\in T\setminus(\cup_{H\in S_\HH}T{|}_{F_H})}\left(\sum_{n > 0}i(\Delta^\circ;t)x^t\right),\\
&= \sum_{\Delta\in T\setminus(\cup_{H\in S_\HH}T{|}_{F_H})}\Ehr_{\Delta^\circ}(x),\\
\end{split}
\end{equation*}
where the last equality follows from the definition of the Ehrhart series of the relative interior of a lattice polytope and the fact that $i(P\setminus S_\HH;0)$ is the Euler characteristic of $P\setminus S_\HH$ (which we have assumed to be zero).
Since each $\Delta^\circ$ is the interior of a unimodular simplex, it follows by Ehrhart-MacDonald reciprocity \cite[Theorem 4.1]{BR07} that
\[
\Ehr_{\Delta^\circ}(x) = \frac{x^{\dim(\Delta)+1}}{(1-x)^{\dim(\Delta)+1}}.
\]

Therefore, in analogous fashion to the proof of \cite[Theorem 10.3]{BR07}, we have that
\begin{equation*}
\begin{split}
\Ehr_{P\setminus S_\HH}(x) 
&= \sum_{\Delta\in T\setminus(\cup_{H\in S_\HH}T{|}_{F_H})}\Ehr_{\Delta^\circ}(x),	\\
&= \sum_{\Delta\in T\setminus(\cup_{H\in S_\HH}T{|}_{F_H})}\frac{x^{\dim(\Delta)+1}}{(1-x)^{\dim(\Delta)+1}},	\\
&= \sum_{k=-1}^df_k(T\setminus(\cup_{H\in S_\HH}T{|}_{F_H}))\left(\frac{x}{1-x}\right)^{k+1},	\\
&= \frac{\sum_{k=-1}^df_k(T\setminus(\cup_{H\in S_\HH}T{|}_{F_H}))x^{k+1}(1-x)^{d-k}}{(1-x)^{d+1}},	\\
&= \frac{\sum_{k=0}^{d+1}f_{k-1}(T\setminus(\cup_{H\in S_\HH}T{|}_{F_H}))x^{k}(1-x)^{d-k+1}}{(1-x)^{d+1}},	\\
&= \frac{h(T\setminus(\cup_{H\in S_\HH}T{\big|}_{F_H});x)}{(1-x)^{d+1}},	\\
\end{split}
\end{equation*}
which completes the proof.  
\end{proof}
To prove the desired results in Section~\ref{sec: apps}, we will use well-chosen sets $\HH$ and $S_\HH$.  
Let $q\in\R^n$, and let $P\subset \R^n$ a $d$-dimensional convex polytope.  
A point $p\in P$ is called {\em visible} from $q$ if the open line segment $(q,p)$ in $\R^n$ does not meet the interior of $P$.  
Let $B\subset \partial P$ denote the collection of all points visible from $q$, and set $D := \overline{\partial P\setminus B}$, the closure of $\partial P\setminus B$.  
Given a facet $F$ of $P$, the point $q$ is said to be {\em beyond} $F$ if $q\notin T_F(P)$, the tangent cone of $F$ in $P$.  
It follows that $q$ is beyond $F$ if and only if the closed line segment $[q,p]$ satisfies $[q,p]\cap P = \{p\}$ for all $p\in F$ \cite[Section 3.7]{BS18}.  
Otherwise, the point $q$ is said to be {\em beneath} $F$.
Hence, $B$ consists of all points in $\partial P$ that lie in a facet which $q$ is beyond; that is,
$
P\setminus B = P \setminus \HH_B,
$
where $\HH_B$ denotes the collection of facets which $q$ is beyond.
Similarly, $D$ consists of all points in $\partial P$ that lie in a facet which $q$ is beneath; that is,
$
P\setminus D = P \setminus \HH_D,
$
where $\HH_D$ denotes the collection of facets which $q$ is beneath.
Stanley observed in \cite[Proposition 8.2]{S75}, that $i(P\setminus B;t)$ is a polynomial and that classical Ehrhart-MacDonald reciprocity \cite[Theorem 4.1]{BR07} can be extended to 
\begin{equation}
\label{eqn: recip}
(-1)^{d}i(P\setminus D; t) = i(P\setminus B;-t).
\end{equation}
In \cite{E67,E67b}, Ehrhart referred to the half-open polytopes $P\setminus B$ and $P\setminus D$ as {\em reciprocal domains}, since they satisfy the reciprocity law in equation~\eqref{eqn: recip}.  
The notion of reciprocal domains, and a natural generalization thereof, will be important to us in the coming sections as we derive real-rootedness results via shellings of polytopal complexes.  
A first lemma that will help us along the way is the following, which translates equation~\eqref{eqn: recip} into a statement about $h^\ast$-polynomials.  
Its proof is left as an exercise.  
%---LEMMA: Half-open Reciprocity----
\begin{lemma}
\label{lem: half-open reciprocity}
Let $P\subset \R^n$ be a $d$-dimensional lattice polytope, and let $q\in \R^n$. 
Let $B$ denote the points in $\partial P$ that are visible from $q$, and set $D := \overline{\partial P\setminus B}$.
If $B$ and $D$ are both nonempty then
\[
h^\ast(P\setminus D ; x) = \I_{d+1}h^\ast(P\setminus B; x).
\]
\end{lemma}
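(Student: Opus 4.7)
The plan is to translate the extended Ehrhart--Macdonald reciprocity in equation~\eqref{eqn: recip} into a functional equation relating the Ehrhart series of $P \setminus B$ and $P \setminus D$, and then rewrite that equation in terms of their $h^*$-polynomials.

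First, I would invoke the standard rational-function identity that, for any polynomial $p(t)$ of degree at most $d$,
\[
\sum_{t \geq 0} p(t) \, x^t = -\sum_{t \geq 1} p(-t) \, x^{-t}
\]
as rational functions in $x$. Applying this to the Ehrhart polynomial $p(t) = i(P \setminus B; t)$ (which is a polynomial by Stanley's extension of Ehrhart reciprocity) and using~\eqref{eqn: recip} to replace $i(P \setminus B; -t)$ by $(-1)^d i(P \setminus D; t)$, I would obtain
\[
\Ehr_{P \setminus B}(x) = (-1)^{d+1}\bigl(\Ehr_{P \setminus D}(1/x) - i(P \setminus D; 0)\bigr).
\]

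Next, I would argue that the constant term $i(P \setminus D; 0)$, which equals the compactly supported Euler characteristic of $P \setminus D$, is zero. Since both $B$ and $D$ are nonempty, the point $q$ lies outside $P$, and the closed invisible region $D$ is a topological $(d-1)$-disk inside $\partial P$; by additivity of $\chi_c$ for closed subsets, $\chi_c(P \setminus D) = \chi(P) - \chi(D) = 1 - 1 = 0$. With this, the identity collapses to $\Ehr_{P \setminus B}(x) = (-1)^{d+1}\Ehr_{P \setminus D}(1/x)$.

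Finally, substituting $\Ehr_{P \setminus S}(x) = h^*(P \setminus S; x)/(1-x)^{d+1}$ on both sides and using $(1-1/x)^{d+1} = (-1)^{d+1}(1-x)^{d+1}/x^{d+1}$ to cancel the signs, I arrive at
\[
h^*(P \setminus B; x) = x^{d+1} h^*(P \setminus D; 1/x) = \I_{d+1} h^*(P \setminus D; x).
\]
Applying the involution $\I_{d+1}$ to both sides yields the claimed $h^*(P \setminus D; x) = \I_{d+1} h^*(P \setminus B; x)$. The main obstacle in the argument is the Euler characteristic step: rigorously justifying that $D$ (and symmetrically $B$) is a topological disk requires the classical convex-geometric description of the visibility region from a point exterior to a convex polytope, a fact that is standard but not emphasized in the excerpt and would need to be cited or briefly verified.
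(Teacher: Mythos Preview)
The paper leaves the proof of this lemma as an exercise, so there is nothing to compare against directly. Your argument is correct and is precisely the intended derivation: you pass equation~\eqref{eqn: recip} through the standard rational-function identity for generating series of polynomials, kill the constant term via the Euler-characteristic computation $\chi_c(P\setminus D)=\chi(P)-\chi(D)=0$, and then clear denominators to read off the $h^\ast$-relation. Your self-identified ``obstacle''---that $D$ is a $(d-1)$-ball when $B,D\neq\emptyset$---is indeed the only point needing outside input, and it is the classical fact (underlying Bruggesser--Mani) that the beneath-facets of a convex polytope from an exterior generic point form a shellable, hence ball, subcomplex of $\partial P$; citing \cite{BM72} or \cite[Chapter~8]{Z12} suffices.
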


%\begin{proof}
%Since $i(P\setminus B; t)$ is a polynomial in $t$ then 
%\[
%\sum_{t\geq0}i(P\setminus B; t)x^t
%\quad
%\text{ and }
%\quad
%\sum_{t< 0}i(P\setminus B; t)x^t
%\]
%both evaluate to rational functions, and 
%\[
%\sum_{t\geq0}i(P\setminus B; t)x^t
%+
%\sum_{t< 0}i(P\setminus B; t)x^t
%=
%0.
%\]
%(See, for instance, \cite[Exercise 4.7]{BR07}).
%Since $(-1)^{d}i(P\setminus D; t) = i(P\setminus B;-t)$, it follows that
%\begin{equation*}
%\begin{split}
%\Ehr_{P\setminus B}\left(\frac{1}{x}\right)
%&= \sum_{t\leq0}i(P\setminus B; -t)x^t, 			\\
%&= -\sum_{t>0}i(P\setminus B; -t)x^t, 			\\
%&= \sum_{t>0}(-1)^{d+1}i(P\setminus D; t)x^t, 		\\
%&= (-1)^{d+1}\Ehr_{P\setminus D}(x).			\\
%\end{split}
%\end{equation*}
%Here, we used the fact that 
%\[
%i(P\setminus B; 0) = \chi(P\setminus B) = 0 = \chi(P\setminus D) = i(P\setminus D; 0).
%\]
%This follows from the fact that both $B$ and $D$ are assumed to be nonempty and that $B$, $D$ and $P$ are all contractible topological spaces.
%It then follows that
%\begin{equation*}
%\begin{split}
%\frac{h^\ast(P\setminus D; x)}{(1-x)^{d+1}}
%= \Ehr_{P\setminus D}(x)					
%= (-1)^{d+1}\Ehr_{P\setminus B}\left(\frac{1}{x}\right)
%= \frac{\I_{d+1}h^\ast(P\setminus B; z)}{(1-x)^{d+1}}.	\\
%\end{split}
%\end{equation*}
%Thus, we conclude that $h^\ast(P\setminus D ; x) = \I_{d+1}h^\ast(P\setminus B; x).$
%\end{proof}

%---SECTION: Subdivisions of Shellable Complexes and Stable Shellings------------
\section{Subdivisions of Shellable Complexes and Stable Shellings}
\label{sec: shellable complexes}
In this section, we provide a sufficient condition for the $h$-polynomial of a subdivision of a polytopal complex to have only real zeros.  
As part of this condition, we will require the complex to be shellable.
%---DEFINITION: Shellable Complex---
\begin{definition}
\label{def: shellable}
Let $\CC$ be a pure $d$-dimensional polytopal complex.  
A {\em shelling} of $\CC$ is a linear ordering $(F_1,F_2,\ldots, F_s)$ of the facets of $\CC$ such that either $\CC$ is zero-dimensional (and thus the facets are points), or it satisfies the following two conditions:
\begin{enumerate}
	\item The boundary complex $\CC(\partial F_1)$ of the first facet in the linear ordering has a shelling, and
	
	\item For $j\in[s]$, the intersection of the facet $F_j$ with the union of the previous facets is nonempty and it is the beginning segment of a shelling of the $(d-1)$-dimensional boundary complex of $F_j$; that is, 
	\[
	F_j\cap\bigcup_{i=1}^{j-1}F_i = G_1\cup G_2\cup\cdots G_r
	\]
	for some shelling $(G_1,\ldots, G_r,\ldots, G_t)$ of the complex $\CC(\partial F_j)$ and $r\in[t]$.  
\end{enumerate}
A polytopal complex is {\em shellable} if it is pure and admits a shelling.  
\end{definition}
A shelling of a polytopal complex presents a natural way to decompose the complex into disjoint, relative polytopal complexes.
Given a shelling order $(F_1,\ldots, F_s)$ of a polytopal complex $\CC$, and a subdivision $\varphi: \CC^\prime\rightarrow\CC$, we can let 
\[
\RR_i := \CC^\prime{\big|}_{F_i}\setminus\left(\bigcup_{k = 1}^{i-1}\CC^\prime{\big|}_{F_k}\right),
\]
to produce the decomposition of $\CC^\prime$ into disjoint relative complexes
\begin{equation}
\label{eqn: shelling decomposition}
\CC^\prime = \bigsqcup_{i=1}^s\RR_i,
\end{equation}
with respect to the shelling $(F_1,\ldots, F_s)$ of $\CC$. 
For a fixed shelling $(F_1,\ldots,F_s)$ of a polytopal complex $\CC$ and subdivision $\varphi: \CC^\prime \rightarrow \CC$, for $i\in[s]$,  we will call the relative complex
$
\RR_i
$
the \emph{relative complex associated to $F_i$ by $(F_1,\ldots,F_s)$ and $\varphi$}.

The recursive structure of shelling orders of polytopal complexes, and the additive nature of the $h$-polynomials of their associated relative simplicial complexes (see Lemma~\ref{lem: relative decomposition}) pairs nicely with the properties of interlacing polynomials discussed in Subsection~\ref{subsec: interlacing polynomials}.  
In particular, Lemma~\ref{lem: relative decomposition} allows us to combine the fact that a shellable complex $\CC$ always admits a decomposition as in equation~\eqref{eqn: shelling decomposition} with the facts about interlacing sequences collected in Subsection~\ref{subsec: interlacing polynomials}. 
We can then prove a theorem that directly relates the recursive nature of shelling orders to the recursive nature of interlacing sequences of polynomials.  
%---THEOREM: Subdivisions of Shellable Complexes---
\begin{theorem}
\label{thm: subdivisions of shellable complexes}
Let $\CC$ be a shellable polytopal complex with shelling $(F_1,\ldots,F_s)$ and subdivision $\varphi: \CC^\prime\rightarrow \CC$. %, and let 
%\[
%\RR_i := \CC^\prime{\big|}_{F_i}\setminus\left(\bigcup_{k = 1}^{i-1}\CC^\prime{\big|}_{F_k}\right),
%\] 
%for all $i\in[s]$.  
If $(h(\RR_{\sigma(i)};x))_{i = 1}^s$ is an interlacing sequence for some $\sigma\in\mathfrak{S}_s$, the symmetric group on $[s]$, then $h(\CC^\prime;x)$ is real-rooted.
\end{theorem}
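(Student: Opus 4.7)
The plan is to combine the additive decomposition in Lemma~\ref{lem: relative decomposition} with the convex-cone structure of the interlacing relation $\prec$ established in Proposition~\ref{prop: convex cones}. The theorem is essentially a translation: the shelling of $\CC$ supplies the partition of $\CC^\prime$ into the relative complexes $\RR_1,\ldots,\RR_s$ displayed in equation~(\ref{eqn: shelling decomposition}), and the interlacing hypothesis on their $h$-polynomials forces the corresponding sum to lie in a single convex cone of polynomials interlaced by a common ``top'' polynomial, all of which are real-rooted.

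Concretely, I would first observe that the decomposition $\CC^\prime = \bigsqcup_{i=1}^s \RR_i$ satisfies the hypothesis of Lemma~\ref{lem: relative decomposition} (each $\RR_i$ is a relative polytopal complex of dimension at most $\dim \CC^\prime$, since $\CC^\prime$ subdivides the pure complex $\CC$), yielding
\[
h(\CC^\prime;x) = \sum_{i=1}^s h(\RR_i;x) = \sum_{i=1}^s h(\RR_{\sigma(i)};x),
\]
since reindexing a finite sum by $\sigma \in \mathfrak{S}_s$ does not change its value. Setting $p_i := h(\RR_{\sigma(i)};x)$, the interlacing hypothesis gives $p_i \prec p_s$ for all $i < s$, while Lemma~\ref{lem: basic facts}(1) yields $p_s \prec p_s$ trivially. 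Hence every $p_i$ lies in the convex cone $\{q \in \R[x] : q \prec p_s\}$ from Proposition~\ref{prop: convex cones}, and so the sum $\sum_{i=1}^s p_i = h(\CC^\prime;x)$ lies in this cone as well. By the definition of $\prec$, every polynomial in this cone is real-rooted, which gives the conclusion.

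The only subtle point is the degenerate case $p_s \equiv 0$, since Proposition~\ref{prop: convex cones} explicitly requires a nonzero reference polynomial. This is handled by instead choosing the largest index $k \in [s]$ with $p_k \not\equiv 0$ and using $p_k$ as the common interlacer; the truncated subsequence $(p_1,\ldots,p_k)$ remains interlacing, the remaining $p_{k+1},\ldots,p_s$ vanish identically and contribute nothing to the sum, and the cone argument goes through unchanged. If no such $k$ exists, then $h(\CC^\prime;x) \equiv 0$, which is real-rooted by convention. Since the two main analytic ingredients are already in hand from Section~\ref{sec: preliminaries}, there is no serious obstacle; the content of the theorem is precisely the clean bridge it provides between the recursive geometry of shellings (via Lemma~\ref{lem: relative decomposition}) and the cone structure of interlacing sequences (via Proposition~\ref{prop: convex cones}).
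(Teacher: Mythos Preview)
Your proposal is correct and follows essentially the same approach as the paper: both arguments use the shelling decomposition~\eqref{eqn: shelling decomposition} together with Lemma~\ref{lem: relative decomposition} to write $h(\CC^\prime;x)$ as the sum $\sum_i h(\RR_i;x)$, and then invoke Proposition~\ref{prop: convex cones} on the interlacing sequence to conclude real-rootedness. Your treatment is in fact slightly more careful than the paper's in spelling out the cone argument and in handling the degenerate case where some $p_i$ vanish; the only place where the paper is marginally more explicit is in verifying that each $\RR_i$ has full dimension $d$ (needed for the hypothesis of Lemma~\ref{lem: relative decomposition}), which you gloss as ``dimension at most $\dim\CC^\prime$'' but which follows since each $\RR_i$ retains the top-dimensional face $F_i$.
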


\begin{proof}
Notice first that since $\CC$ is a shellable polytopal complex of dimension $d$, and since $\CC^\prime$ is a (topological) subdivision of $\CC$, then each subcomplex $\CC^\prime{\big|}_{F_i} := \varphi^{-1}(2^F)$ is also $d$-dimensional.  
Moreover, since $\RR_i$ only removes faces of dimension strictly less than $d$, then each $\RR_i$ is a $d$-dimensional relative simplicial complex.  
So, by Lemma~\ref{lem: relative decomposition}, we have that 
\[
h(\Omega;x) = \sum_{i=1}^sh(\RR_i;x).
\]
Supposing now that there exists $\sigma\in\mathfrak{S}_s$ such that $(h(\RR_{\sigma(i)};x))_{i = 1}^s$ is an interlacing sequence, it then follows from Proposition~\ref{prop: convex cones} that $h(\Omega;x)$ is real-rooted.
\end{proof}

While the proof of Theorem~\ref{thm: subdivisions of shellable complexes} is straightforward to derive (once we have carefully defined and identified all of the necessary ingredients), its applications are much more interesting.
%In the coming section, we will apply Theorem~\ref{thm: subdivisions of shellable complexes} to some open problems in the literature as well as use it to reprove the results that motivated these problems.  
%From this perspective, it seems that Theorem~\ref{thm: subdivisions of shellable complexes} may offer a unifying approach to questions on the real-rootedness of $h$-polynomials of subdivisions of shellable complexes.   
%
The key to applying Theorem~\ref{thm: subdivisions of shellable complexes} lies in our ability to identify a shelling order $(F_1,\ldots, F_s)$ of the polytopal complex $\CC$ such that the relative complexes $\RR_i$ constructed for each facet $F_i$ with respect to the subdivision $\varphi: \CC^\prime\rightarrow \CC$ will all have real-rooted and interlacing $h$-polynomials $h(\RR_{i};x)$.
While the real-rootedness and interlacing conditions are inherently tied to the choice of subdivision $\varphi$, we will see in Section~\ref{sec: apps} that for many complexes, the type of shelling to which Theorem~\ref{thm: subdivisions of shellable complexes} applies is independent of the choice of $\varphi$ when $\varphi$ is chosen from amongst the most commonly studied subdivisions. 
Here, the `most commonly studied subdivisions' refers to {\em uniform subdivisions} \cite{A20}, such as the barycentric and edgewise subdivisions, which will be the focus of our results in Section~\ref{sec: apps}.  
Indeed, when the geometry of a given polytopal complex is such that the barycentric subdivision of the complex has a real-rooted $h$-polynomial, it is often the case that the edgewise subdivision admits the same property for the given complex.  
This phenomenon was noted, and formalized, in the case of simplicial complexes in the recent preprint \cite{A20}, where the author studied the class of {\em $\mathcal{F}$-uniform triangulations}.  
In this paper, we make a similar observation for more general polytopal complexes, and apply this reasoning to shelling orders.  
Namely, we are interested in the class of shellings $(F_1,\ldots,F_s)$ of a polytopal complex $\CC$ for which the associated relative complexes $\RR_1,\ldots,\RR_s$ with respect to $(F_1,\ldots,F_s)$ and uniform subdivisions, such as the barycentric subdivision and edgewise subdivision, fulfill the hypotheses of Theorem~\ref{thm: subdivisions of shellable complexes}.  
As it turns out, such shellings arise when we insist that the relative complexes $\RR_i$ are given by a generalization of Ehrhart's reciprocal domains for convex lattice polytopes (see subsection~\ref{subsec: ehrhart theory}). 

%---SUBSECTION: Stable Shellings---- 
\subsection{Stable shellings}
\label{subsec: stable shellings}
The family of shellings to which we will apply Theorem~\ref{thm: subdivisions of shellable complexes} will be called \emph{stable shellings}.  
To define them we first need to generalize the reciprocal domains for convex embeddings of rational polytopes to general (abstract) polytopes.  
Let $\mathcal{P} = ([n],<_\mathcal{P})$ be a partially ordered set on elements $[n]$ with partial order $<_\mathcal{P}$.   
If $\mathcal{P}$ has a unique minimal element, which we will denote by $\hat0$, then we can define its set of {\em atoms} to be all elements $i\in[n]$ such that $\hat0<_\mathcal{P} i$ and there is no $j\in[n]$ such that $\hat0<_\mathcal{P} j<_\mathcal{P} i$.  
Given a poset $\mathcal{P}$ with a unique minimal element, we will denote its set of atoms by $A(\mathcal{P})$.  
The {\em dual poset} of $\mathcal{P}$ is the poset $\mathcal{P}^\ast$ on elements $[n]$ with partial order $<_{\mathcal{P}^\ast}$ in which $i <_{\mathcal{P}^\ast} j$ if and only if $j<_{\mathcal{P}} i$.  
Given two elements $i,j\in[n]$, the {\em closed interval} between $i$ and $j$ in $\mathcal{P}$ is the set $[i,j]:= \{k\in[n] : i\leq_\mathcal{P} k \leq_\mathcal{P} j\}.$
Note that we can view the closed interval $[i,j]$ as a subposet of $\mathcal{P}$ by allowing it to inherit the partial order $<_\mathcal{P}$ from $\mathcal{P}$. 

Let $P$ be a $d$-dimensional polytope with face lattice $L(P)$; that is, $L(P)$ is the partially ordered set whose elements are the faces of $P$ and for which the partial order is given by inclusion.  
Since $L(P)$ is a {\em lattice} (see \cite[Chapter 3.3]{S11}), it follows that $L(P)$ has a unique minimal and maximal element, corresponding to the faces $\emptyset$ and $P$ of $P$. 
Let $\CC(P)$ denote the polytopal complex consisting of all faces of $P$.  
Given a face $F$ of $P$ we call the pair of relative complexes 
\[
\CC(P)\setminus \CC(A([F,P]^\ast)) 
\qquad 
\mbox{and}
\qquad
\CC(P)\setminus \CC(A(L(P)^\ast)\setminus A([F,P]^\ast))
\]
the {\em reciprocal domains} associated to $F$ in $P$. 
We call a relative complex $\RR$ {\em stable} if it is isomorphic to one of the reciprocal domains associated to a face $F$ in a polytope $P$, for some polytope $P$.  
Using this terminology, we can now define the family of shellings, to which we will apply Theorem~\ref{thm: subdivisions of shellable complexes}.
%---DEFINITION: Stable Shelling----
\begin{definition}
\label{def: stable shelling}
Let $\CC$ be a polytopal complex.  
A shelling $(F_1,\ldots, F_s)$ of $\CC$ is {\em stable} if for all $i\in[s]$ the relative complex $\RR_i$ associated to $F_i$ by the shelling $(F_1,\ldots,F_s)$ and the trivial subdivision $\varphi: \CC\rightarrow\CC$ is stable.  
\end{definition}

The set of stable shellings of a (shellable) simplicial complex is, in fact, the set of all shellings of the complex. % stable shellings include any shelling of any (shellable) simplicial complex. 
%---PROPOSITION: Stable Shellings of Simplicial Complexes---
\begin{proposition}
\label{prop: stable shellings of simplicial complexes}
Let $\CC$ be a shellable simplicial complex.  Then any shelling $(F_1,\ldots,F_s)$ of $\CC$ is stable.  
\end{proposition}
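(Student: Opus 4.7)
The plan is to match the classical description of the relative complex coming from a shelling of a simplicial complex with the second reciprocal domain introduced just above the proposition. Concretely, I would argue that for each $i$, the relative complex $\RR_i$ attached to the facet $F_i$ by the shelling is precisely the set of faces of $F_i$ containing a distinguished face $\sigma_i \subseteq F_i$ (the restriction face), and that this interval is exactly $\CC(F_i)\setminus\CC(A(L(F_i)^\ast)\setminus A([\sigma_i,F_i]^\ast))$.

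First, I would recall the standard restriction-face description of shellings of simplicial complexes: since each $F_i$ is a simplex and $F_i\cap\bigcup_{k<i}F_k$ is the beginning segment of a shelling of $\CC(\partial F_i)$, this intersection equals the union of a set of facets of $F_i$, namely the set $\{F_i\setminus\{v\} : v\in\sigma_i\}$ for some $\sigma_i\subseteq F_i$. One then observes that a face $G\subseteq F_i$ lies in $\RR_i$ if and only if $\sigma_i\subseteq G\subseteq F_i$, since $G$ fails to lie in $\bigcup_{k<i}\CC|_{F_k}$ precisely when $G$ is not contained in any $F_i\setminus\{v\}$ with $v\in\sigma_i$, i.e.\ when $G$ contains every vertex of $\sigma_i$.

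Second, I would translate the second reciprocal domain for the polytope $F_i$ with distinguished face $\sigma_i$ into the same description. The atoms of $L(F_i)^\ast$ are the facets of $F_i$, i.e.\ the sets $F_i\setminus\{v\}$ for $v\in F_i$. The atoms of $[\sigma_i,F_i]^\ast$ are the elements covering $F_i$ in the dual, equivalently the facets of $F_i$ containing $\sigma_i$, which are exactly the $F_i\setminus\{v\}$ for $v\in F_i\setminus\sigma_i$. Consequently
\[
A(L(F_i)^\ast)\setminus A([\sigma_i,F_i]^\ast) = \{F_i\setminus\{v\} : v\in\sigma_i\},
\]
and $\CC(A(L(F_i)^\ast)\setminus A([\sigma_i,F_i]^\ast))$ consists of all faces missing at least one vertex of $\sigma_i$. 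Removing these from $\CC(F_i)$ leaves precisely the faces of $F_i$ that contain $\sigma_i$, matching $\RR_i$.

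Combining the two identifications shows that each $\RR_i$ is isomorphic to the second reciprocal domain associated to $\sigma_i$ in the simplex $F_i$, hence stable in the sense of the definition just above the proposition, and so the shelling $(F_1,\ldots,F_s)$ is stable. The only real step that needs care is the restriction-face description of $\RR_i$; once that is in hand, the identification with the reciprocal domain is a direct computation of atoms in $L(F_i)^\ast$. I do not anticipate a genuine obstacle, as the simplex case is essentially combinatorial bookkeeping; the subtlety is simply to choose the second reciprocal domain (rather than the first), which is dictated by the fact that shellings remove lower facets on the boundary corresponding to $\sigma_i$, not to its complement.
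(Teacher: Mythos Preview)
Your argument is correct and follows essentially the same approach as the paper: identify the relative complex $\RR_i$ as a reciprocal domain of the simplex $F_i$. The only cosmetic difference is that the paper chooses the face $F = G_1\cap\cdots\cap G_r$ (the complement of your restriction face $\sigma_i$) and matches $\RR_i$ to the \emph{first} reciprocal domain $\CC(F_i)\setminus\CC(A([F,F_i]^\ast))$, whereas you use $\sigma_i$ and the second reciprocal domain; since $F$ and $\sigma_i$ are complementary faces of the simplex, these two descriptions coincide, so your remark that the second domain is ``dictated'' is not quite right---either works.
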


\begin{proof}
Suppose that $\CC$ is $d$-dimensional, let $\varphi: \CC\rightarrow\CC$ denote the trivial subdivision of $\CC$, and fix a facet $F_i$ of $\CC$.  
If $i=1$, then since $(F_1,\ldots,F_s)$ is a shelling, it follows that the relative complex associated to $F_i$ by the shelling $(F_1,\ldots,F_s)$ and $\varphi$ is $\CC(F_i)$.  
Fix the face $F = F_i$ of the simplex $F_i$.  
Then the reciprocal domains associated to $F$ in $F_i$ are $\CC(F_i)$ and $\CC(F_i)\setminus \CC(A(L(F_i)^\ast))$.  
Hence, $\CC(F_i)$ is a stable relative complex.  

Suppose now that $i>1$.  Since $(F_1,\ldots, F_s)$ is a shelling order, it follows that $F_i\cap(F_1\cup\cdots\cup F_{i-1}) = G_1\cup \cdots\cup G_r$ is the initial segment of a shelling order $(G_1,\ldots,G_r,\ldots,G_d)$ of the boundary complex $\CC(\partial F_i)$.  
Hence, $G_1,\ldots,G_r$ are facets of $F_i$.  
Since $F_i$ is a simplex, each face of $F_i$ corresponds to an intersection of a subset of its facets $G_1,\ldots, G_d$.  
Consider the face $F$ of $F_i$ given by $G_1\cap\cdots\cap G_r$.  
It follows that the reciprocal domains associated to $F$ in $F_i$ are
\[
\CC(F_i)\setminus \CC(A([F,F_i]^\ast)) = \CC(F_i)\setminus \CC(G_1\cup\cdots\cup G_r),
\]
and 
\[
\CC(F_i)\setminus \CC(A(L(F_i)^\ast)\setminus A([F,F_i]^\ast)) = \CC(F_i)\setminus \CC(G_{r+1}\cup\cdots\cup G_d),
\]
Hence, it follows that the complex $\RR_i = \CC(F_i)\setminus \CC(G_1\cup\cdots\cup G_r)$ is stable, which completes the proof. 
\end{proof}

Hence, any shelling of a simplicial complex is stable.  
As we will see in Section~\ref{sec: apps}, this observation coincides with the recent results on $\mathcal{F}$-uniform triangulations \cite{A20}, in the sense that any we will be able to apply Theorem~\ref{thm: subdivisions of shellable complexes} to any shelling of a simplicial complex with respect to uniform subdivisions like the barycentric and edgewise subdivisions.  

%---SUBSECTION: Stable Shellings of Cubical Complexes---- 
\subsection{Stable shellings of cubical complexes}
\label{subsec: stable shellings of cubical complexes}
In general, it is not the case that all shellings of a polytopal complex are stable.  
Already in the case of cubical complexes, stable shellings become a proper subclass of the class of all shellings.  
To see this, we can characterize which relative complexes of the combinatorial $d$-cube are stable.  

In the following, we let $\square_d$ denote the (abstract) $d$-dimensional cube.
%Let $F_1,\ldots,F_d,\ldots,F_{2d}$  denote the facets of $\square_d$
When we consider a standard geometric realization of $\square_d$, such as the cube $[-1,1]^d\in\R^d$, we assign each facet $F$ of $\square_d$ to a facet-defining hyperplane $x_i = \pm 1$ of $[-1,1]^d$. 
Given a facet $F$ of $\square_d$ we will say that $F$ is \emph{opposite (or opposing)} the facet $G$ of $\square_d$ whenever $F$ is identified with $x_i =1$ and $G$ is identified with $x_i = -1$ (or vice versa), for some $i\in[d]$.  
In this case, we call the pair of facets $F,G$ an \emph{opposing pair}.  
Given the embedding $[-1,1]^d$ we can also define the half-open polytopes
\[
[-1,1]^d_\ell :=
[-1,1]^d_\ell\setminus\{x_d=1,\ldots,x_{d+1-\ell}=1\}
\]
for $0\leq \ell\leq d$, and
\[
[-1,1]^d_\ell :=
[-1,1]^d_\ell\setminus\{x_d=1,\ldots,x_{1}=1\}\cup\{x_d=-1,\ldots,x_{2d+1-\ell}=1\}
\]
for $d+1\leq \ell\leq 2d$.
The following lemma gives two characterizations of stable relative subcomplexes of the cube, one in terms of its possible geometric realizations and the other in terms of opposing pairs of facets.
%---LEMMA: Opposing Pairs Characterization----
\begin{lemma}
\label{lem: opposing pairs}
Let $\square_d$ be the $d$-dimensional (combinatorial) cube, and let $\mathcal{D}$ be a subcomplex of $\CC(\square_d)$.  
Then the following are equivalent:
\begin{enumerate}
	\item The relative complex $\CC(\square_d)\setminus\mathcal{D}$ is stable,
	\item $\CC(\square_d)\setminus\mathcal{D}$ has geometric realization $[-1,1]^d_\ell$ for some $0\leq \ell\leq 2d$.  
	\item The set of codimension $1$ faces of $\CC(\square_d)\setminus\mathcal{D}$ or the set of facets of $\mathcal{D}$ does not contain an opposing pair.  
\end{enumerate}
\end{lemma}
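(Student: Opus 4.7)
The plan is to establish $(1)\Leftrightarrow(2)$ and $(2)\Leftrightarrow(3)$ separately, both flowing from an explicit description of the two reciprocal domains of $\square_d$. The key observation is that a face $F$ of $\square_d$ of codimension $k\leq d$ is defined in the realization $[-1,1]^d$ by a system of $k$ coordinate equations $x_{i_j}=\epsilon_j$ with $\epsilon_j\in\{-1,+1\}$, so that the facets of $\square_d$ containing $F$ are exactly the $k$ coordinate facets $\{x_{i_j}=\epsilon_j\}$, one drawn from each of $k$ distinct opposing pairs. Translating into the face lattice, $A([F,\square_d]^\ast)$ is the set of coatoms of $[F,\square_d]$, which is precisely this collection, while $A(L(\square_d)^\ast)\setminus A([F,\square_d]^\ast)$ consists of the complementary $2d-k$ facets.

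From this I would deduce $(1)\Leftrightarrow(2)$. The first reciprocal domain at $F$ removes the subcomplex generated by the $k$ chosen coordinate facets, and since the symmetries of $\square_d$ act transitively on $k$-subsets of facets drawn from $k$ distinct opposing pairs, one can normalize these facets to $\{x_d=1,\ldots,x_{d+1-k}=1\}$, producing the realization $[-1,1]^d_k$ for each $k\in\{0,\ldots,d\}$. The second reciprocal domain removes the complementary $2d-k$ facets, and an analogous normalization yields $[-1,1]^d_{2d-k}$ for each $k\in\{0,\ldots,d\}$, so the two families together exhaust $\ell\in\{0,\ldots,2d\}$ (with the extreme cases $\ell=0$ and $\ell=2d$ also arising from $F=\emptyset$). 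Since a stable $\CC(\square_d)\setminus\mathcal{D}$ is by definition isomorphic to a reciprocal domain in some polytope $P$, and the ambient polytopal complex $\CC(\square_d)$ forces $P\cong\square_d$, (1) and (2) are equivalent. The equivalence $(2)\Leftrightarrow(3)$ is then a direct count of opposing pairs: in $[-1,1]^d_\ell$ with $\ell\leq d$ the excised facets all have sign $+1$ and distinct coordinate indices, so no opposing pair lies in $\mathcal{D}$; for $\ell\geq d$ the surviving codimension $1$ faces all have sign $-1$ and distinct indices, so no opposing pair remains among them. Conversely, given (3), a permutation of coordinates and sign-flips bring $\mathcal{D}$ into one of the two standard forms, producing the required realization.

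The main obstacle I expect is the step in $(1)\Rightarrow(2)$ that lifts an abstract isomorphism of relative complexes into an isomorphism $P\cong\square_d$ of ambient polytopes. One has to exploit the fact that $\mathcal{D}$ has dimension strictly less than $d$, so the unique top-dimensional cell of $\CC(\square_d)\setminus\mathcal{D}$ survives along with all of its incidence data, and this data determines the combinatorial type of the ambient. A secondary subtlety is that condition (3) is sharp only when $\mathcal{D}$ is pure of dimension $d-1$, which is the natural case for cubical shellings, where each relative complex $\RR_i$ removes a union of codimension $1$ faces of the facet; in this regime ``facets of $\mathcal{D}$'' unambiguously refers to a subset of the $2d$ codimension $1$ faces of $\square_d$, making the opposing-pair condition well-posed.
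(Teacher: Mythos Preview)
Your approach is essentially the same as the paper's: both prove $(1)\Leftrightarrow(2)$ and $(2)\Leftrightarrow(3)$ separately, using the fact that the facets containing a codimension-$k$ face $F$ form a transversal of $k$ opposing pairs, and then normalizing via the hyperoctahedral symmetry (coordinate permutations and sign reflections) of $[-1,1]^d$. The two subtleties you flag are real and are glossed over in the paper: the proof of $(1)\Rightarrow(2)$ there begins by tacitly assuming the ambient polytope is $\square_d$, and the equivalence with $(3)$ implicitly assumes $\mathcal{D}$ is generated by codimension-$1$ faces of $\square_d$ (otherwise ``facets of $\mathcal{D}$'' need not be facets of the cube and the opposing-pair language breaks down). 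Your proposed fixes---recovering the ambient polytope from the unique top cell and its incidences, and restricting to the pure $(d-1)$-dimensional case that actually arises in shellings---are correct and make your argument slightly more careful than the paper's.
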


\begin{proof}
We first prove the equivalence of $(1)$ and $(2)$.  
Suppose that $\CC(\square_d)\setminus\mathcal{D}$ is stable.  
Then $\CC(\square_d)\setminus\mathcal{D}$ is isomorphic to a reciprocal domain $\CC(\square_d)\setminus \CC(A([F,\square_d]^\ast))$ or $\CC(\square_d)\setminus \CC(A(L(\square_d)^\ast)\setminus A([F,\square_d]^\ast))$ for some face $F$ of the cube $\square_d$.  
Let $F_1,\ldots,F_{2d}$ denote the set of facets of $\square_d$.  
Given the geometric realization $[-1,1]^d$ of $\square_d$, without loss of generality, we can assume that the facet $F_i$ is identified with the facet-defining hyperplane $x_i = 1$ and that the facet $F_{d+i}$ is identified with the facet-defining hyperplane $x_i = -1$ of $[-1,1]^d$, for all $i\in[d]$.  
Given this identification, the face $F$ of $\square_d$ corresponds to the intersection of the facet-defining hyperplanes identified with the atoms in the closed interval $[F,\square_d]^\ast$.  
Since the geometric realization of any nonempty $F$ cannot lie in both $x_i =1$ and $x_i = -1$ for any $i\in[d]$, it follows that the hyperplanes corresponding to the atoms in $[F,\square_d]^\ast$ are of the form
\[
\{x_{i_1}=1,\ldots,x_{i_s}=1\}\cup\{x_{j_1}=-1,\ldots,x_{j_t}=-1\},
\]
where the sets $\{i_1,\ldots,i_s\},\{j_1,\ldots,j_t\}\subset[d]$ are disjoint.
Hence, by reflecting over the hyperplanes $x_{j_1} = 0,\ldots, x_{j_t} = 0$, we can instead identify the face $F$ with the intersection of the hyperplanes 
\[
\{x_{i_1}=1,\ldots,x_{i_s}=1\}\cup\{x_{j_1}=1,\ldots,x_{j_t}=1\}.
\]
Finally, by a simple permutation of coordinates, we can identify $F$ with the intersection of the hyperplanes 
\[
\{x_{d}=1,\ldots,x_{d+1-\ell}=1\},
\]
where $\ell = |\{x_{i_1}=1,\ldots,x_{i_s}=1\}\cup\{x_{j_1}=1,\ldots,x_{j_t}=1\}|.$
Hence, a geometric realization of $\CC(\square_d)\setminus\mathcal{D}$ is given either by $[-1,1]^d_{\ell}$ or $[-1,1]^d_{2d-\ell}$.  

Conversely, suppose that $\CC(\square_d)\setminus\mathcal{D}$ has geometric realization $[-1,1]^d_\ell$ for some $0\leq \ell\leq 2d$.  
If $0\leq \ell\leq d$, then let $F$ be the face of $\square_d$ whose geometric realization is the intersection of the hyperplanes $x_{d} = 1,\ldots,x_{d+1-\ell} = 1$.  
Then $\CC(\square_d)\setminus\mathcal{D} = \CC(\square_d)\setminus \CC(A([F,\square_d]^\ast))$.  
If $d<\ell \leq 2d$ then let $F$ be the face of $\square_d$ whose geometric realization is the intersection of the hyperplanes $x_1 = -1,\ldots, x_{2d-\ell} = -1$.  
Then $\CC(\square_d)\setminus\mathcal{D} =\CC(\square_d)\setminus \CC(A(L(\square_d)^\ast)\setminus A([F,\square_d]^\ast))$. 

We now prove the equivalence of $(2)$ and $(3)$.  
Suppose first that $\CC(\square_d)\setminus\mathcal{D}$ has geometric realization $[-1,1]^d_\ell$ for some $0\leq \ell\leq 2d$.  
Then $\CC(\square_d)\setminus\mathcal{D}$ is isomorphic to one of two types of half-open cubes:
\begin{enumerate}[(a)]
	\item  
		$
		[-1,1]^d_\ell = [-1,1]^d\setminus\{x_d = 1,\ldots,x_{d+1-\ell} = 1\},
		$
		for $0\leq \ell\leq d$, and 
	\item
		$
		[-1,1]^d_\ell = [-1,1]^d\setminus\{x_d = 1,\ldots,x_1 = 1\}\cup\{x_d = -1,\ldots,x_{2d+1-\ell} = -1\},
		$
		for $d+1\leq \ell \leq 2d$. 
\end{enumerate}
Suppose that $\CC(\square_d)\setminus\mathcal{D}$ has geometric realization corresponding to a half-open cube given in (a).
Then the subset of facets in $\mathcal{D}$ are, without loss of generality, given by a subset of the facets $x_1 = 1,\ldots, x_d = 1$ of $[-1,1]^d$.  
Hence, the set of facets of $\mathcal{D}$ cannot contain an opposing pair, as this set contains no facet with defining hyperplane $x_i = -1$.  
In that case that $\CC(\square_d)\setminus\mathcal{D}$ has geometric realization corresponding to a half-open cube given in (b), the codimension $1$ faces of $\CC(\square_d)\setminus\mathcal{D}$ correspond to a subset of the facet-defining hyperplanes $x_1 = -1,\ldots, x_d = -1$ of $[-1,1]^d$, and therefore cannot contain an opposing pair.  

Conversely, suppose that $\CC(\square_d)\setminus\mathcal{D}$ is such that the set of codimension $1$ faces of $\CC(\square_d)\setminus\mathcal{D}$ or the set of facets of $\mathcal{D}$ does not contain an opposing pair.  
In the former of these two cases, the set of codimension $1$ faces of $\CC(\square_d)\setminus\mathcal{D}$ is given by a subset of facets of $[-1,1]^d$ of the form 
\[
\{x_{i_1}=1,\ldots,x_{i_s}=1\}\cup\{x_{j_1}=-1,\ldots,x_{j_t}=-1\},
\]
for two disjoint subsets, $\{i_1,\ldots,i_s\}$ and $\{j_1,\ldots,j_t\}$, of $[d]$.  
As in the proof of equivalence of $(1)$ and $(2)$, by reflecting through the hyperplanes $x_{j_1} = 0,\ldots, x_{j_t} = 0$, we obtain that $\CC(\square_d)\setminus\mathcal{D}$ is isomorphic to 
\[
\mathcal{H} := [-1,1]^d\setminus\{x_{i_1}=1,\ldots,x_{i_s}=1\}\cup\{x_{j_1}=1,\ldots,x_{j_t}=1\}.
\]
By applying the correct permutation matrix to $\R^n$, we recover that $\mathcal{H}$ is unimodularly equivalent to $[-1,1]^d_\ell$ where $\ell = |\{x_{i_1}=1,\ldots,x_{i_s}=1\}\cup\{x_{j_1}=1,\ldots,x_{j_t}=1\}|\leq d$. 
Hence, $\CC(\square_d)\setminus\mathcal{D}$ has geometric realization $[-1,1]^d_\ell$ for some $0\leq \ell \leq d$.  

In the second case, we assume that the set of facets of $\mathcal{D}$ does not contain an opposing pair.  
By applying a similar argument, we see that the set of facets of $\mathcal{D}$ can be identified with a subset of the hyperplanes $x_1 = -1,\ldots,x_d = -1$, which we assume without loss of generality is $x_1 = -1, \ldots, x_\ell = -1$ for some $\ell\leq d$.  
Following the same steps as in the previous case, we find that $\CC(\square_d)\setminus\mathcal{D}$ is isomorphic to $[-1,1]^d_{d+\ell}$, which completes the proof. 
\end{proof}

Lemma~\ref{lem: opposing pairs} characterizes the possible relative complexes that can be associated to the facets of a shellable cubical complex by a stable shelling, and hence, it characterizes the stable shellings of cubical complexes.  
Furthermore, Lemma~\ref{lem: opposing pairs} shows that, in the case of cubical complexes, any linear ordering of the facets of a complex for which all associated relative complexes are stable is a shelling order.
%In the case of cubical complexes, the previous lemma shows that the stablility property of a linear order implies that it is a shelling order.
%---LEMMA: Stability implies Shelling---
\begin{lemma}
\label{lem: stability implies shelling}
Let $\CC$ be a pure $d$-dimensional cubical complex and let $(F_1,\ldots,F_s)$ be a linear ordering of the facets of $\CC$ such that the relative complex
\[
\RR_i := F_i \setminus (F_1\cup\cdots\cup F_{i-1})
\]
associated to $F_i$ by $(F_1,\ldots,F_s)$ is stable for all $i\in[s]$.  
Then $(F_1,\ldots,F_s)$ is a shelling order.  
\end{lemma}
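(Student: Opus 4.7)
The plan is to verify both conditions of Definition~\ref{def: shellable} for the linear ordering $(F_1, \ldots, F_s)$. For $j = 1$, the complex $\CC(\partial F_1)$ is the boundary complex of a cube, which admits a shelling by the classical Bruggesser--Mani theorem. So the main work is, for each $j \geq 2$, to show that $\mathcal{D}_j := F_j \cap (F_1 \cup \cdots \cup F_{j-1})$ is the initial segment of some shelling of $\CC(\partial F_j)$.

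Stability of $\RR_j$ together with Lemma~\ref{lem: opposing pairs} gives a geometric realization $F_j = [-1,1]^d$ in which $\mathcal{D}_j$ is a union of full facets of $F_j$, and such that one of two cases holds: (a) the facets comprising $\mathcal{D}_j$ contain no opposing pair, or (b) the facets of $\CC(\partial F_j)$ not contained in $\mathcal{D}_j$ contain no opposing pair.

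In case (a), after a coordinate permutation and reflecting through the hyperplanes $x_i = 0$ (both symmetries of $[-1,1]^d$), I may assume the facets of $\mathcal{D}_j$ are exactly $\{x_i = 1 : i \in S\}$ for some $S \subseteq [d]$. Then the point $q \in \R^d$ with $q_i = N$ for $i \in S$ and $q_i = 0$ otherwise (for any $N > 1$) lies beyond precisely the facets of $\mathcal{D}_j$ and beneath every other facet of $F_j$; consequently the Bruggesser--Mani line shelling from $q$ produces a shelling of $\CC(\partial F_j)$ whose initial $|S|$ facets are exactly those comprising $\mathcal{D}_j$. In case (b), the symmetric argument yields a shelling of $\CC(\partial F_j)$ beginning with the facets of $\CC(\partial F_j) \setminus \mathcal{D}_j$; reversing this shelling---valid because the reverse of any shelling of a polytope boundary is again a shelling---produces a shelling with $\mathcal{D}_j$ as its initial segment.

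The principal obstacle is that Lemma~\ref{lem: opposing pairs} must deliver both a cube embedding and the guarantee that $\mathcal{D}_j$ is pure of codimension one in $\CC(\partial F_j)$; fortunately, the equivalence of parts (2) and (3) in that lemma supplies exactly this data. Once the embedding is fixed, what remains is the routine beneath-beyond verification of the visibility pattern from $q$ and an appeal to the standard reversibility of polytope-boundary shellings.
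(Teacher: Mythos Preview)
Your argument is correct and reaches the same conclusion, but by a somewhat different route than the paper's. The paper invokes a known characterization (attributed to \cite[Exercise 8.1(i)]{Z12}) that a collection of facets of $\square_d$ forms a shellable subcomplex of $\CC(\partial\square_d)$ if and only if it is empty, is everything, or contains at least one facet without its opposite, together with the extendable shellability of $\CC(\partial\square_d)$. You instead build the required shelling by hand: once Lemma~\ref{lem: opposing pairs} places $\mathcal{D}_j$ (or its complement) inside $\{x_i = 1 : i\in S\}$, a Bruggesser--Mani line shelling from your point $q$ picks up exactly those facets first, and in case~(b) you reverse the resulting shelling of the polytope boundary. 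Your argument is more explicit and self-contained --- it essentially \emph{proves} the cited exercise rather than quoting it --- while the paper's is terser because it outsources that work. Both rely on Lemma~\ref{lem: opposing pairs} in the same essential way, so the difference is one of packaging rather than of idea.

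One small remark worth making explicit: Definition~\ref{def: shellable} asks that $\mathcal{D}_j$ be nonempty for $j\ge 2$, and stability alone does not preclude $\mathcal{D}_j=\emptyset$ (the full complex $\CC(F_j)$ is itself a stable relative complex). Neither your proof nor the paper's addresses this edge case directly; in the applications it does not arise, but you may wish to note it.
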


\begin{proof}
A well-known result states that a set of facets of a $d$-dimensional cube $\square_d$ forms a shellable subcomplex of the boundary complex of $\square_d$ if and only if either it contains no facets of $\square_d$, contains all facets of $\square_d$, or if it contains at least one facet such that its opposing facet is not in the complex (see, for instance, \cite[Exercise 8.1(i)]{Z12}).  
Moreover, it follows from this result that the boundary complex of the $d$-dimensional cube is {\em extendably shellable}; meaning that any partial shelling of the complex can be continued to a complete shelling.  
Hence, it suffices to prove that $F_i\cap(F_1\cup\cdots\cup F_{i-1})$ determines a shellable subcomplex of the boundary of the $d$-dimensional cube for all $i\in[s]$.  

Notice first that $F_1$ is a $d$-dimensional cube, and hence the boundary complex $\CC(\partial F_1)$ is shellable.  
So let $i>1$ and consider the complex determined by $F_i\cap(F_1\cup\cdots\cup F_{i-1})$.
This complex has facets given by the set of facets of $F_i$ that are not codimension $1$ faces of $\RR_i$. 
Since $\RR_i$ is a stable complex, by Lemma~\ref{lem: opposing pairs}, it follows that either the set of codimension $1$ faces of $\RR_i$ or the set of facets of $F_i\cap(F_1\cup\cdots\cup F_{i-1})$ does not contain an opposing pair.  
Moreover, all facets of the cube $F_i$ are either codimension $1$ faces of $\RR_i$ or facets of $F_i\cap(F_1\cup\cdots\cup F_{i-1})$.  

Suppose first that $F_i\cap(F_1\cup\cdots\cup F_{i-1})$ does not contain an opposing pair.  
Then either the set of facets of $F_i\cap(F_1\cup\cdots\cup F_{i-1})$ that are facets of $F_i$ is empty, or $F_i\cap(F_1\cup\cdots\cup F_{i-1})$ contains a facet of $F_i$ but not its opposite.  
In either case, the set of facets of $F_i\cap(F_1\cup\cdots\cup F_{i-1})$ form a shellable subcomplex of $\CC(\partial F_i)$.  
Suppose, on the other hand, that $\RR_i$ does not contain an opposing pair.  
Then either $F_i\cap(F_1\cup\cdots\cup F_{i-1})$ is the entire complex $\CC(\partial F_i)$ (i.e., the boundary complex of a $d$-dimensional cube), or $\RR_i$ contains a facet of $F_i$ but not its opposite.  
In the latter case, it follows that $F_i\cap(F_1\cup\cdots\cup F_{i-1})$ contains a facet of $F_i$ but not its opposite.  
Hence, in either case, $F_i\cap(F_1\cup\cdots\cup F_{i-1})$ forms a shellable subcomplex of $\CC(\partial F_i)$, which completes the proof.
\end{proof}

%---FIGURE: Relative 3-Cubes----
\begin{figure}
\label{fig: relative 3-cubes}
\centering
\begin{tabular}{c c c c}
\begin{tikzpicture}[scale=0.6]

	\draw[fill=blue!25] (0,0,0) -- (2,0,0) -- (2,0,2) -- (0,0,2) -- cycle;
	\draw[fill=blue!25] (0,0,0) -- (0,2,0) -- (2,2,0) -- (2,0,0) -- cycle;
	\draw[fill=blue!25] (0,0,0) -- (0,2,0) -- (0,2,2) -- (0,0,2) -- cycle;
	\draw[fill=blue!25] (2,2,2) -- (2,2,0) -- (2,0,0) -- (2,0,2) -- cycle;
	\draw[fill=blue!25] (2,2,2) -- (2,2,0) -- (0,2,0) -- (0,2,2) -- cycle;
	\draw[fill=blue!25] (2,2,2) -- (2,0,2) -- (0,0,2) -- (0,2,2) -- cycle;

	%---NODES---
	
	\node [circle, draw, fill=black!100, inner sep=2pt, minimum width=2pt] (0) at (0,0,0) {};
	\node [circle, draw, fill=black!100, inner sep=2pt, minimum width=2pt] (1) at (2,0,0) {};
	\node [circle, draw, fill=black!100, inner sep=2pt, minimum width=2pt] (2) at (0,2,0) {};
	\node [circle, draw, fill=black!100, inner sep=2pt, minimum width=2pt] (3) at (0,0,2) {};
	\node [circle, draw, fill=black!100, inner sep=2pt, minimum width=2pt] (4) at (2,2,0) {};
	\node [circle, draw, fill=black!100, inner sep=2pt, minimum width=2pt] (5) at (2,0,2) {};
	\node [circle, draw, fill=black!100, inner sep=2pt, minimum width=2pt] (6) at (0,2,2) {};
	\node [circle, draw, fill=black!100, inner sep=2pt, minimum width=2pt] (7) at (2,2,2) {};

	%---EDGES---
 	 \draw   	 (0) -- (1) ;
 	 \draw   	 (0) -- (2) ;
 	 \draw   	 (0) -- (3) ;
	 \draw   	 (1) -- (4) ;
	 \draw   	 (1) -- (5) ;
	 \draw   	 (2) -- (4) ;
	 \draw   	 (2) -- (6) ;
	 \draw   	 (3) -- (5) ;
	 \draw   	 (3) -- (6) ;
 	 \draw   	 (4) -- (7) ;
 	 \draw   	 (5) -- (7) ;
 	 \draw   	 (6) -- (7) ;
	 
\end{tikzpicture}

 	& 
	
\begin{tikzpicture}[scale=0.6]

	\draw[fill=blue!25] (0,0,0) -- (2,0,0) [dashed]-- (2,0,2) -- (0,0,2) -- cycle;
	\draw[fill=blue!25] (0,0,0) -- (0,2,0) -- (2,2,0) [dashed]-- (2,0,0) -- cycle;
	\draw[fill=blue!25] (0,0,0) -- (0,2,0) -- (0,2,2) -- (0,0,2) -- cycle;
%	\draw[fill=blue!40] (2,2,2) [dashed]-- (2,2,0) [dashed]-- (2,0,0) [dashed]-- (2,0,2) [dashed]-- cycle;
	\draw[fill=blue!25] (2,2,2) [dashed]-- (2,2,0) -- (0,2,0) -- (0,2,2) -- cycle;
	\draw[fill=blue!25] (2,2,2) [dashed]-- (2,0,2) -- (0,0,2) -- (0,2,2) -- cycle;

	%---NODES---
	
	\node [circle, draw, fill=black!100, inner sep=2pt, minimum width=2pt] (0) at (0,0,0) {};
	\node [circle, draw, fill=black!00, inner sep=2pt, minimum width=2pt] (1) at (2,0,0) {};
	\node [circle, draw, fill=black!100, inner sep=2pt, minimum width=2pt] (2) at (0,2,0) {};
	\node [circle, draw, fill=black!100, inner sep=2pt, minimum width=2pt] (3) at (0,0,2) {};
	\node [circle, draw, fill=black!00, inner sep=2pt, minimum width=2pt] (4) at (2,2,0) {};
	\node [circle, draw, fill=black!00, inner sep=2pt, minimum width=2pt] (5) at (2,0,2) {};
	\node [circle, draw, fill=black!100, inner sep=2pt, minimum width=2pt] (6) at (0,2,2) {};
	\node [circle, draw, fill=black!00, inner sep=2pt, minimum width=2pt] (7) at (2,2,2) {};

	%---EDGES---
 	 \draw   	 (0) -- (1) ;
 	 \draw   	 (0) -- (2) ;
 	 \draw   	 (0) -- (3) ;
	 \draw[dashed]   	 (1) -- (4) ;
	 \draw[dashed]   	 (1) -- (5) ;
	 \draw   	 (2) -- (4) ;
	 \draw   	 (2) -- (6) ;
	 \draw   	 (3) -- (5) ;
	 \draw   	 (3) -- (6) ;
 	 \draw[dashed]  	 (4) -- (7) ;
 	 \draw[dashed]   	 (5) -- (7) ;
 	 \draw   	 (6) -- (7) ;
	 
\end{tikzpicture}

	&

\begin{tikzpicture}[scale=0.6]

	\draw[fill=blue!25, draw = blue!25] (0,0,0) -- (2,0,0) -- (2,0,2) -- (0,0,2) -- cycle;
%	\draw[fill=blue!25, draw = white] (0,0,0) -- (0,2,0) -- (2,2,0) -- (2,0,0) -- cycle;
	\draw[fill=blue!25, draw = blue!25] (0,0,0) -- (0,2,0)  -- (0,2,2) -- (0,0,2) -- cycle;
%	\draw[fill=blue!40, draw = white] (2,2,2) -- (2,2,0) -- (2,0,0) -- (2,0,2) -- cycle;
	\draw[fill=blue!25, draw = blue!25] (2,2,2) -- (2,2,0) -- (0,2,0) -- (0,2,2) -- cycle;
	\draw[fill=blue!25, draw = blue!25] (2,2,2) -- (2,0,2) -- (0,0,2) -- (0,2,2) -- cycle;

	%---NODES---
	
	\node [circle, draw, fill=black!00, inner sep=2pt, minimum width=2pt] (0) at (0,0,0) {};
	\node [circle, draw, fill=black!00, inner sep=2pt, minimum width=2pt] (1) at (2,0,0) {};
	\node [circle, draw, fill=black!00, inner sep=2pt, minimum width=2pt] (2) at (0,2,0) {};
	\node [circle, draw, fill=black!100, inner sep=2pt, minimum width=2pt] (3) at (0,0,2) {};
	\node [circle, draw, fill=black!00, inner sep=2pt, minimum width=2pt] (4) at (2,2,0) {};
	\node [circle, draw, fill=black!00, inner sep=2pt, minimum width=2pt] (5) at (2,0,2) {};
	\node [circle, draw, fill=black!00, inner sep=2pt, minimum width=2pt] (6) at (0,2,2) {};
	\node [circle, draw, fill=black!00, inner sep=2pt, minimum width=2pt] (7) at (2,2,2) {};

	%---EDGES---
 	 \draw[dashed]   	 (0) -- (1) ;
 	 \draw[dashed]   	 (0) -- (2) ;
 	 \draw   	 (0) -- (3) ;
	 \draw[dashed]   	 (1) -- (4) ;
	 \draw[dashed]   	 (1) -- (5) ;
	 \draw[dashed]   	 (2) -- (4) ;
	 \draw   	 (2) -- (6) ;
	 \draw   	 (3) -- (5) ;
	 \draw   	 (3) -- (6) ;
 	 \draw[dashed]  	 (4) -- (7) ;
 	 \draw[dashed]   	 (5) -- (7) ;
 	 \draw    	 (6) -- (7) ;
	 
\end{tikzpicture}

	&

\begin{tikzpicture}[scale=0.6]

	\draw[fill=blue!25, draw = blue!25] (0,0,0) -- (2,0,0) -- (2,0,2) -- (0,0,2) -- cycle;
%	\draw[fill=blue!25, draw = white] (0,0,0) -- (0,2,0) -- (2,2,0) -- (2,0,0) -- cycle;
	\draw[fill=blue!25, draw = blue!25] (0,0,0) -- (0,2,0)  -- (0,2,2) -- (0,0,2) -- cycle;
%	\draw[fill=blue!40, draw = white] (2,2,2) -- (2,2,0) -- (2,0,0) -- (2,0,2) -- cycle;
%	\draw[fill=blue!25, draw = blue!25] (2,2,2) -- (2,2,0) -- (0,2,0) -- (0,2,2) -- cycle;
	\draw[fill=blue!25, draw = blue!25] (2,2,2) -- (2,0,2) -- (0,0,2) -- (0,2,2) -- cycle;

	%---NODES---
	
	\node [circle, draw, fill=black!00, inner sep=2pt, minimum width=2pt] (0) at (0,0,0) {};
	\node [circle, draw, fill=black!00, inner sep=2pt, minimum width=2pt] (1) at (2,0,0) {};
	\node [circle, draw, fill=black!00, inner sep=2pt, minimum width=2pt] (2) at (0,2,0) {};
	\node [circle, draw, fill=black!100, inner sep=2pt, minimum width=2pt] (3) at (0,0,2) {};
	\node [circle, draw, fill=black!00, inner sep=2pt, minimum width=2pt] (4) at (2,2,0) {};
	\node [circle, draw, fill=black!00, inner sep=2pt, minimum width=2pt] (5) at (2,0,2) {};
	\node [circle, draw, fill=black!00, inner sep=2pt, minimum width=2pt] (6) at (0,2,2) {};
	\node [circle, draw, fill=black!00, inner sep=2pt, minimum width=2pt] (7) at (2,2,2) {};

	%---EDGES---
 	 \draw[dashed]   	 (0) -- (1) ;
 	 \draw[dashed]   	 (0) -- (2) ;
 	 \draw   	 (0) -- (3) ;
	 \draw[dashed]   	 (1) -- (4) ;
	 \draw[dashed]   	 (1) -- (5) ;
	 \draw[dashed]   	 (2) -- (4) ;
	 \draw[dashed]   	 (2) -- (6) ;
	 \draw   	 (3) -- (5) ;
	 \draw   	 (3) -- (6) ;
 	 \draw[dashed]  	 (4) -- (7) ;
 	 \draw[dashed]   	 (5) -- (7) ;
 	 \draw[dashed]    	 (6) -- (7) ;

\end{tikzpicture}
	\\

\begin{tikzpicture}[scale=0.6]

%	\draw[fill=blue!25, draw = blue!25] (0,0,0) -- (2,0,0) -- (2,0,2) -- (0,0,2) -- cycle;
%	\draw[fill=blue!25, draw = white] (0,0,0) -- (0,2,0) -- (2,2,0) -- (2,0,0) -- cycle;
	\draw[fill=blue!25, draw = blue!25] (0,0,0) -- (0,2,0)  -- (0,2,2) -- (0,0,2) -- cycle;
%	\draw[fill=blue!40, draw = white] (2,2,2) -- (2,2,0) -- (2,0,0) -- (2,0,2) -- cycle;
%	\draw[fill=blue!25, draw = blue!25] (2,2,2) -- (2,2,0) -- (0,2,0) -- (0,2,2) -- cycle;
	\draw[fill=blue!25, draw = blue!25] (2,2,2) -- (2,0,2) -- (0,0,2) -- (0,2,2) -- cycle;

	%---NODES---
	
	\node [circle, draw, fill=black!00, inner sep=2pt, minimum width=2pt] (0) at (0,0,0) {};
	\node [circle, draw, fill=black!00, inner sep=2pt, minimum width=2pt] (1) at (2,0,0) {};
	\node [circle, draw, fill=black!00, inner sep=2pt, minimum width=2pt] (2) at (0,2,0) {};
	\node [circle, draw, fill=black!00, inner sep=2pt, minimum width=2pt] (3) at (0,0,2) {};
	\node [circle, draw, fill=black!00, inner sep=2pt, minimum width=2pt] (4) at (2,2,0) {};
	\node [circle, draw, fill=black!00, inner sep=2pt, minimum width=2pt] (5) at (2,0,2) {};
	\node [circle, draw, fill=black!00, inner sep=2pt, minimum width=2pt] (6) at (0,2,2) {};
	\node [circle, draw, fill=black!00, inner sep=2pt, minimum width=2pt] (7) at (2,2,2) {};

	%---EDGES---
 	 \draw[dashed]   	 (0) -- (1) ;
 	 \draw[dashed]   	 (0) -- (2) ;
 	 \draw[dashed]   	 (0) -- (3) ;
	 \draw[dashed]   	 (1) -- (4) ;
	 \draw[dashed]   	 (1) -- (5) ;
	 \draw[dashed]   	 (2) -- (4) ;
	 \draw[dashed]   	 (2) -- (6) ;
	 \draw[dashed]   	 (3) -- (5) ;
	 \draw[dashed]   	 (3) -- (6) ;
 	 \draw[dashed]  	 (4) -- (7) ;
 	 \draw[dashed]   	 (5) -- (7) ;
 	 \draw[dashed]    	 (6) -- (7) ;
	 
\end{tikzpicture}

 	& 
	
\begin{tikzpicture}[scale=0.6]

%	\draw[fill=blue!25, draw = blue!25] (0,0,0) -- (2,0,0) -- (2,0,2) -- (0,0,2) -- cycle;
%	\draw[fill=blue!25, draw = white] (0,0,0) -- (0,2,0) -- (2,2,0) -- (2,0,0) -- cycle;
	\draw[fill=blue!25, draw = blue!25] (0,0,0) -- (0,2,0)  -- (0,2,2) -- (0,0,2) -- cycle;
%	\draw[fill=blue!40, draw = white] (2,2,2) -- (2,2,0) -- (2,0,0) -- (2,0,2) -- cycle;
%	\draw[fill=blue!25, draw = blue!25] (2,2,2) -- (2,2,0) -- (0,2,0) -- (0,2,2) -- cycle;
%	\draw[fill=blue!25, draw = blue!25] (2,2,2) -- (2,0,2) -- (0,0,2) -- (0,2,2) -- cycle;

	%---NODES---
	
	\node [circle, draw, fill=black!00, inner sep=2pt, minimum width=2pt] (0) at (0,0,0) {};
	\node [circle, draw, fill=black!00, inner sep=2pt, minimum width=2pt] (1) at (2,0,0) {};
	\node [circle, draw, fill=black!00, inner sep=2pt, minimum width=2pt] (2) at (0,2,0) {};
	\node [circle, draw, fill=black!00, inner sep=2pt, minimum width=2pt] (3) at (0,0,2) {};
	\node [circle, draw, fill=black!00, inner sep=2pt, minimum width=2pt] (4) at (2,2,0) {};
	\node [circle, draw, fill=black!00, inner sep=2pt, minimum width=2pt] (5) at (2,0,2) {};
	\node [circle, draw, fill=black!00, inner sep=2pt, minimum width=2pt] (6) at (0,2,2) {};
	\node [circle, draw, fill=black!00, inner sep=2pt, minimum width=2pt] (7) at (2,2,2) {};

	%---EDGES---
 	 \draw[dashed]   	 (0) -- (1) ;
 	 \draw[dashed]   	 (0) -- (2) ;
 	 \draw[dashed]   	 (0) -- (3) ;
	 \draw[dashed]   	 (1) -- (4) ;
	 \draw[dashed]   	 (1) -- (5) ;
	 \draw[dashed]   	 (2) -- (4) ;
	 \draw[dashed]   	 (2) -- (6) ;
	 \draw[dashed]   	 (3) -- (5) ;
	 \draw   	 (3) -- (6) ;
 	 \draw[dashed]  	 (4) -- (7) ;
 	 \draw[dashed]   	 (5) -- (7) ;
 	 \draw[dashed]    	 (6) -- (7) ;
	 
\end{tikzpicture}

 	&

\begin{tikzpicture}[scale=0.6]

%	\draw[fill=blue!25, draw = blue!25] (0,0,0) -- (2,0,0) -- (2,0,2) -- (0,0,2) -- cycle;
%	\draw[fill=blue!25, draw = white] (0,0,0) -- (0,2,0) -- (2,2,0) -- (2,0,0) -- cycle;
%	\draw[fill=blue!25, draw = blue!25] (0,0,0) -- (0,2,0)  -- (0,2,2) -- (0,0,2) -- cycle;
%	\draw[fill=blue!40, draw = white] (2,2,2) -- (2,2,0) -- (2,0,0) -- (2,0,2) -- cycle;
%	\draw[fill=blue!25, draw = blue!25] (2,2,2) -- (2,2,0) -- (0,2,0) -- (0,2,2) -- cycle;
%	\draw[fill=blue!25, draw = blue!25] (2,2,2) -- (2,0,2) -- (0,0,2) -- (0,2,2) -- cycle;

	%---NODES---
	
	\node [circle, draw, fill=black!00, inner sep=2pt, minimum width=2pt] (0) at (0,0,0) {};
	\node [circle, draw, fill=black!00, inner sep=2pt, minimum width=2pt] (1) at (2,0,0) {};
	\node [circle, draw, fill=black!00, inner sep=2pt, minimum width=2pt] (2) at (0,2,0) {};
	\node [circle, draw, fill=black!00, inner sep=2pt, minimum width=2pt] (3) at (0,0,2) {};
	\node [circle, draw, fill=black!00, inner sep=2pt, minimum width=2pt] (4) at (2,2,0) {};
	\node [circle, draw, fill=black!00, inner sep=2pt, minimum width=2pt] (5) at (2,0,2) {};
	\node [circle, draw, fill=black!00, inner sep=2pt, minimum width=2pt] (6) at (0,2,2) {};
	\node [circle, draw, fill=black!00, inner sep=2pt, minimum width=2pt] (7) at (2,2,2) {};

	%---EDGES---
 	 \draw[dashed]   	 (0) -- (1) ;
 	 \draw[dashed]   	 (0) -- (2) ;
 	 \draw[dashed]   	 (0) -- (3) ;
	 \draw[dashed]   	 (1) -- (4) ;
	 \draw[dashed]   	 (1) -- (5) ;
	 \draw[dashed]   	 (2) -- (4) ;
	 \draw[dashed]   	 (2) -- (6) ;
	 \draw[dashed]   	 (3) -- (5) ;
	 \draw[dashed]   	 (3) -- (6) ;
 	 \draw[dashed]  	 (4) -- (7) ;
 	 \draw[dashed]   	 (5) -- (7) ;
 	 \draw[dashed]    	 (6) -- (7) ;
	 
\end{tikzpicture}

 	&

\begin{tikzpicture}[scale=0.6]

%	\draw[fill=blue!25, draw = blue!25] (0,0,0) -- (2,0,0) -- (2,0,2) -- (0,0,2) -- cycle;
%	\draw[fill=blue!25, draw = white] (0,0,0) -- (0,2,0) -- (2,2,0) -- (2,0,0) -- cycle;
	\draw[fill=blue!25, draw = blue!25] (0,0,0) -- (0,2,0)  -- (0,2,2) -- (0,0,2) -- cycle;
	\draw[fill=blue!25, draw = white] (2,2,2) -- (2,2,0) -- (2,0,0) -- (2,0,2) -- cycle;
	\draw[fill=blue!25, draw = blue!25] (2,2,2) -- (2,2,0) -- (0,2,0) -- (0,2,2) -- cycle;
%	\draw[fill=blue!25, draw = blue!25] (2,2,2) -- (2,0,2) -- (0,0,2) -- (0,2,2) -- cycle;

	%---NODES---
	
	\node [circle, draw, fill=black!00, inner sep=2pt, minimum width=2pt] (0) at (0,0,0) {};
	\node [circle, draw, fill=black!00, inner sep=2pt, minimum width=2pt] (1) at (2,0,0) {};
	\node [circle, draw, fill=black!00, inner sep=2pt, minimum width=2pt] (2) at (0,2,0) {};
	\node [circle, draw, fill=black!00, inner sep=2pt, minimum width=2pt] (3) at (0,0,2) {};
	\node [circle, draw, fill=black!00, inner sep=2pt, minimum width=2pt] (4) at (2,2,0) {};
	\node [circle, draw, fill=black!00, inner sep=2pt, minimum width=2pt] (5) at (2,0,2) {};
	\node [circle, draw, fill=black!00, inner sep=2pt, minimum width=2pt] (6) at (0,2,2) {};
	\node [circle, draw, fill=black!00, inner sep=2pt, minimum width=2pt] (7) at (2,2,2) {};

	%---EDGES---
 	 \draw[dashed]   	 (0) -- (1) ;
 	 \draw[dashed]   	 (0) -- (2) ;
 	 \draw[dashed]   	 (0) -- (3) ;
	 \draw[dashed]   	 (1) -- (4) ;
	 \draw[dashed]   	 (1) -- (5) ;
	 \draw[dashed]   	 (2) -- (4) ;
	 \draw   	 (2) -- (6) ;
	 \draw[dashed]   	 (3) -- (5) ;
	 \draw[dashed]   	 (3) -- (6) ;
 	 \draw  	 (4) -- (7) ;
 	 \draw[dashed]   	 (5) -- (7) ;
 	 \draw[dashed]    	 (6) -- (7) ;
	 
\end{tikzpicture}
	\\
\end{tabular}
\vspace{-0.2cm}
\caption{The eight possible relative complexes $\RR_i$ for a facet $F_i$  in a shelling order $(F_1,\ldots,F_s)$ if $F_i$ is a $3$-dimensional cube.  All of the complexes are stable, excluding the bottom-right complex.}
\end{figure}
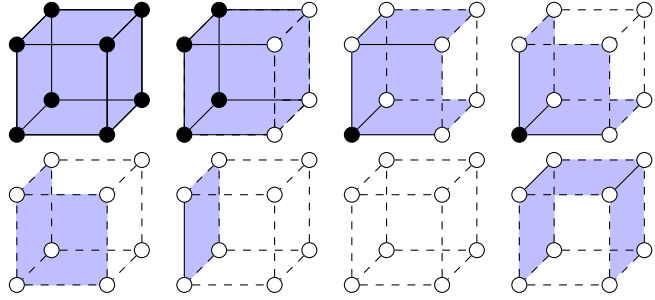

In two dimensions, any relative subcomplex of the cube is stable.  
However, already in three dimensions there exist relative complexes that are not stable, and hence are forbidden from being associated to a facet of a cubical complex in any of its stable shellings.  
For example, Figure~\ref{fig: relative 3-cubes} depicts the eight possible relative complexes of a three cube that may arise as the relative complex associated to a $3$-dimensional facet of a cubical complex with respect to an arbitrary shelling.  
By Lemma~\ref{lem: opposing pairs}, we see that only the first seven are stable.  
The eighth, with its table-top shape, is such that both the codimension $1$  faces of $\CC(\square_3)\setminus\mathcal{D}$ (depicted in blue) and the facets of $\mathcal{D}$ (depicted by their absence) contain an opposing pair.  
Hence, this relative complex cannot be included in any stable shelling of a $3$-dimensional cubical complex.  

%---EXAMPLE: Table Top----
\begin{example}[The stable subcomplexes of the $3$-cube]
\label{ex: table top}
Recall that our motivation for excluding certain relative complexes is that, upon subdivision, we need that all relative complexes associated to our shelling order have real-rooted and interlacing $h$-polynomials (see Theorem~\ref{thm: subdivisions of shellable complexes}).  
Suppose we fix a $3$-dimensional cubical complex $\CC$ and consider its barycentric subdivision $\sd(\CC)$ (as defined in subsection~\ref{subsec: barycentric cubical}).  
We will see in Section~\ref{sec: apps} that any of the stable relative complexes depicted in Figure~\ref{fig: relative 3-cubes} have real-rooted and interlacing $h$-polynomials with respect to this subdivision.  
On the other hand, the barycentric subdivision of the table-top shaped relative complex in Figure~\ref{fig: relative 3-cubes} will have $h$-polynomial
\[
22x+4x^2+22x^3.
\]
which is not even unimodal.  Hence, it cannot be real-rooted nor can it interlace the $h$-polynomials of the other relative complexes.  
In this way, the stable relative subcomplexes of the $3$-cube are precisely the relative complexes to which we can apply Theorem~\ref{thm: subdivisions of shellable complexes} with respect to the barycentric subdivision.   
As we will see in Section~\ref{sec: apps}, this will also be true for the other well-studied uniform subdivisions. 
\end{example}
 
Given that not all relative subcomplexes of the cube are stable, it is then natural to ask which cubical complexes admit stable shellings.  
In the remainder of this section, we give some first examples of cubical complexes admitting stable shellings, and we provide an example of a shelling of a cubical complex that is not stable.  
These results will be used in Section~\ref{sec: apps}, when we apply this theory to answer some open questions on the real-rootedness of $h$-polynomials of barycentric subdivisions of cubical complexes. 

%---EXAMPLE: Boundary of the $d$-cube----
\begin{example}[The boundary of the $d$-cube]
\label{ex: d-cube}
Let $\partial\square_d$ denote the boundary of the $d$-dimensional cube $\square_d$, and consider its geometric realization as the boundary of $[0,1]^d\subset\R^d$.   
Let $F_i$ denote the facet of $\square_d$ corresponding to the facet of $[0,1]^d$ defined by the hyperplane $x_i = 0$ for $i\in[d]$, and let $F_{d+i}$ denote the facet corresponding to that of $[0,1]^d$ defined by $x_i = 1$ for $i\in[d]$.  
Hence $F_i, F_{d+i}$ is an opposing pair for all $i\in[d]$.  
We claim that the linear ordering $(F_1,\ldots,F_d,\ldots,F_{2d})$ is a stable shelling of $\CC(\partial\square_d)$.  
By Lemma~\ref{lem: stability implies shelling}, it suffices to show that, for all $i\in\{1,\ldots,2d\}$, the relative complex $\RR_i$ associated to $F_i$ by $(F_1,\ldots,F_d,\ldots,F_{2d})$ is stable.  

Fix $i\in[d]$ and consider $F_i\cap(F_1\cup\cdots\cup F_{i-1})$.
This intersection consists of the facets of $[0,1]^{d-1}\simeq F_i$ defined by $x_1 = 0, x_2 = 0,\ldots, x_{i-1} = 0$.  
Hence, $F_i\cap(F_1\cup\cdots\cup F_{i-1})$ determines a subcomplex of $\CC(\partial\square_{d-1})$ whose set of facets does not contain an opposing pair.  
It follows from Lemma~\ref{lem: opposing pairs} that $\RR_i = \CC(F_i)\setminus \CC(F_i\cap(F_1\cup\cdots\cup F_{i-1}))$ is stable. 
Now consider a facet $F_{d+i}$ for $i\in[d]$, and the subcomplex 
\[
G = F_{d+i}\cap(F_1\cup\cdots\cup F_d\cup \cdots \cup F_{d+i-1})
\]
of its boundary complex $\CC(\partial F_{d+i})$.  
%If $i = d$, then $G = \partial F_{d+i} = \partial\square_{d-1}$, and it follows that $\RR_i = \CC(F_{d+i})\setminus\CC(G)$ is stable.  
It follows that the codimension $1$ faces of $\RR_{d+i} = \CC(F_{d+i})\setminus\CC(G)$ are determined by the hyperplanes $x_{i+1} = 1,\ldots, x_{d} = 1$.  
Hence, the set of codimension $1$ faces of $\RR_{d+i}$ does not contain an opposing pair of facets of $F_{d+i}$.  
By Lemma~\ref{lem: opposing pairs}, $\RR_{d+i}$ is stable, and we conclude that $(F_1,\ldots,F_d,\ldots,F_{2d})$ is a stable shelling of $\CC(\partial\square_d)$.  

\end{example}

%---EXAMPLE: Piles of Cubes----
\begin{example}[Piles of cubes]
\label{ex: piles of cubes}
For integers $a_1,\ldots, a_d\in\Z_{\geq0}$, the \emph{pile of cubes} $\mathcal{P}_d(a_1,\ldots,a_d)$ is the polytopal complex formed by all unit cubes with integer vertices in the $d$-dimensional box
\[
B(a_1,\ldots,a_d) := \{x\in\R^d : 0\leq x_i\leq a_i, i\in[d]\}.
\]
Each facet of $B(a_1,\ldots,a_d)$ is uniquely associated to an integer point in the half-open box
\[
B^\circ(a_1,\ldots,a_d) := \{x\in\R^d : 0\leq x_i< a_i, i\in[d]\}.
\]
In particular, the integer point $(z_1,\ldots,z_d)\in\Z^d\cap B^\circ(a_1,\ldots,a_d)$ indexes the unit cube whose lexicographically smallest vertex is $(z_1,\ldots, z_d)$.  
(For two points $a,b\in\Z_{\geq0}^d$, $a>_{\lex} b$ in the lexicographic ordering $>_{\lex}$ whenever the left-most entry in $a-b$ is positive.)
The lexicographic ordering induces a total (linear) ordering of the points in $B^\circ(a_1,\ldots,a_d)$.  
Consider the linear ordering of the facets of $\mathcal{P}_d(a_1,\ldots,a_d)$ induced by the lexicographic order on the integer points in $B^\circ(a_1,\ldots,a_d)$ indexing the facets (from smallest-to-largest).  
By \cite[Example 8.2]{Z12}, this is a shelling order for $\mathcal{P}_d(a_1,\ldots,a_d)$.  
To see that this shelling order is stable, consider a facet $F_{(z_1,\ldots,z_d)}$ of $\mathcal{P}_d(a_1,\ldots,a_d)$, and suppose its associated relative complex $\RR_{(z_1,\ldots,z_d)}$ does not contain its facet lying in the hyperplane $x_i = z_i+1$ for some $i\in[d]$.  
It follows that $F_{(z_1,\ldots,z_i+1,\ldots,z_d)}$ was before $F_{(z_1,\ldots,z_d)}$ in the shelling order.  
However, $(z_1,\ldots,z_i+1,\ldots,z_d) >_{\lex} (z_1,\ldots,z_i,\ldots,z_d)$, so this cannot not be the case.  
Hence, the set of facets of the complex 
\[
\CC\left(F_{(z_1,\ldots,z_d)}\cap\bigcup_{(z_1,\ldots,z_d)>_{\lex}(y_1,\ldots,y_d)}F_{(y_1,\ldots,y_d)}\right)
\]
does not contain an opposing pair because it does not contain any of the facets of $F_{(z_1,\ldots,z_d)}$ lying in a hyperplane $x_i = z_i + 1$ for any $i\in[d]$. 
%Hence, the relative complex $\RR_{(z_1,\ldots,z_d)}$ contains all facets of $F_{(z_1,\ldots,z_d)}$ defined by the hyperplanes $x_i = z_i+1$ for $i\in[d]$, and therefore its set of codimension $1$ faces cannot include an opposing pair.  
It follows from Lemma~\ref{lem: opposing pairs} that $\RR_{(z_1,\ldots,z_d)}$ is stable, and thus the lexicographic shelling of $\mathcal{P}_d(a_1,\ldots,a_d)$ is stable. 
\end{example}

%---FIGURE: A Nonstable Shelling----
\begin{figure}
\label{fig: nonstable shelling}
\centering
\begin{tabular}{c c c}
\begin{tikzpicture}[scale=0.6]

	\draw[fill=blue!25] (0,0,0) -- (2,0,0) -- (2,0,2) -- (0,0,2) -- cycle;
	\draw[fill=blue!25] (0,0,0) -- (0,2,0) -- (2,2,0) -- (2,0,0) -- cycle;
	\draw[fill=blue!25] (0,0,0) -- (0,2,0) -- (0,2,2) -- (0,0,2) -- cycle;
	\draw[fill=blue!25] (2,2,2) -- (2,2,0) -- (2,0,0) -- (2,0,2) -- cycle;
	\draw[fill=blue!25] (2,2,2) -- (2,2,0) -- (0,2,0) -- (0,2,2) -- cycle;
	\draw[fill=blue!25] (2,2,2) -- (2,0,2) -- (0,0,2) -- (0,2,2) -- cycle;

	\draw[fill=blue!25] (2,0,0) -- (4,0,0) -- (4,0,2) -- (2,0,2) -- cycle;
	\draw[fill=blue!25] (2,0,0) -- (2,2,0) -- (4,2,0) -- (4,0,0) -- cycle;
	\draw[fill=blue!25] (2,0,0) -- (2,2,0) -- (2,2,2) -- (2,0,2) -- cycle;
	\draw[fill=blue!25] (4,2,2) -- (4,2,0) -- (4,0,0) -- (4,0,2) -- cycle;
	\draw[fill=blue!25] (4,2,2) -- (4,2,0) -- (2,2,0) -- (2,2,2) -- cycle;
	\draw[fill=blue!25] (4,2,2) -- (4,0,2) -- (2,0,2) -- (2,2,2) -- cycle;

	\draw[fill=blue!25] (4,0,0) -- (6,0,0) -- (6,0,2) -- (4,0,2) -- cycle;
	\draw[fill=blue!25] (4,0,0) -- (4,2,0) -- (6,2,0) -- (6,0,0) -- cycle;
	\draw[fill=blue!25] (4,0,0) -- (4,2,0) -- (4,2,2) -- (4,0,2) -- cycle;
	\draw[fill=blue!25] (6,2,2) -- (6,2,0) -- (6,0,0) -- (6,0,2) -- cycle;
	\draw[fill=blue!25] (6,2,2) -- (6,2,0) -- (4,2,0) -- (4,2,2) -- cycle;
	\draw[fill=blue!25] (6,2,2) -- (6,0,2) -- (4,0,2) -- (4,2,2) -- cycle;
	
	\draw[fill=blue!25] (0,2,0) -- (2,2,0) -- (2,2,2) -- (0,2,2) -- cycle;
	\draw[fill=blue!25] (0,2,0) -- (0,4,0) -- (2,4,0) -- (2,2,0) -- cycle;
	\draw[fill=blue!25] (0,2,0) -- (0,4,0) -- (0,4,2) -- (0,2,2) -- cycle;
	\draw[fill=blue!25] (2,4,2) -- (2,4,0) -- (2,2,0) -- (2,2,2) -- cycle;
	\draw[fill=blue!25] (2,4,2) -- (2,4,0) -- (0,4,0) -- (0,4,2) -- cycle;
	\draw[fill=blue!25] (2,4,2) -- (2,2,2) -- (0,2,2) -- (0,4,2) -- cycle;

	\draw[fill=blue!25] (2,2,0) -- (4,2,0) -- (4,2,2) -- (2,2,2) -- cycle;
	\draw[fill=blue!25] (2,2,0) -- (2,4,0) -- (4,4,0) -- (4,2,0) -- cycle;
	\draw[fill=blue!25] (2,2,0) -- (2,4,0) -- (2,4,2) -- (2,2,2) -- cycle;
	\draw[fill=blue!25] (4,4,2) -- (4,4,0) -- (4,2,0) -- (4,2,2) -- cycle;
	\draw[fill=blue!25] (4,4,2) -- (4,4,0) -- (2,4,0) -- (2,4,2) -- cycle;
	\draw[fill=blue!25] (4,4,2) -- (4,2,2) -- (2,2,2) -- (2,4,2) -- cycle;

	\draw[fill=blue!25] (4,2,0) -- (6,2,0) -- (6,2,2) -- (4,2,2) -- cycle;
	\draw[fill=blue!25] (4,2,0) -- (4,4,0) -- (6,4,0) -- (6,2,0) -- cycle;
	\draw[fill=blue!25] (4,2,0) -- (4,4,0) -- (4,4,2) -- (4,2,2) -- cycle;
	\draw[fill=blue!25] (6,4,2) -- (6,4,0) -- (6,2,0) -- (6,2,2) -- cycle;
	\draw[fill=blue!25] (6,4,2) -- (6,4,0) -- (4,4,0) -- (4,4,2) -- cycle;
	\draw[fill=blue!25] (6,4,2) -- (6,2,2) -- (4,2,2) -- (4,4,2) -- cycle;

	%---NODES---
	
	\node [circle, draw, fill=black!100, inner sep=2pt, minimum width=2pt] (0) at (0,0,0) {};
	\node [circle, draw, fill=black!100, inner sep=2pt, minimum width=2pt] (1) at (2,0,0) {};
	\node [circle, draw, fill=black!100, inner sep=2pt, minimum width=2pt] (2) at (0,2,0) {};
	\node [circle, draw, fill=black!100, inner sep=2pt, minimum width=2pt] (3) at (0,0,2) {};
	\node [circle, draw, fill=black!100, inner sep=2pt, minimum width=2pt] (4) at (2,2,0) {};
	\node [circle, draw, fill=black!100, inner sep=2pt, minimum width=2pt] (5) at (2,0,2) {};
	\node [circle, draw, fill=black!100, inner sep=2pt, minimum width=2pt] (6) at (0,2,2) {};
	\node [circle, draw, fill=black!100, inner sep=2pt, minimum width=2pt] (7) at (2,2,2) {};

	\node [circle, draw, fill=black!100, inner sep=2pt, minimum width=2pt] (a0) at (2,0,0) {};
	\node [circle, draw, fill=black!100, inner sep=2pt, minimum width=2pt] (a1) at (4,0,0) {};
	\node [circle, draw, fill=black!100, inner sep=2pt, minimum width=2pt] (a2) at (2,2,0) {};
	\node [circle, draw, fill=black!100, inner sep=2pt, minimum width=2pt] (a3) at (2,0,2) {};
	\node [circle, draw, fill=black!100, inner sep=2pt, minimum width=2pt] (a4) at (4,2,0) {};
	\node [circle, draw, fill=black!100, inner sep=2pt, minimum width=2pt] (a5) at (4,0,2) {};
	\node [circle, draw, fill=black!100, inner sep=2pt, minimum width=2pt] (a6) at (2,2,2) {};
	\node [circle, draw, fill=black!100, inner sep=2pt, minimum width=2pt] (a7) at (4,2,2) {};

	\node [circle, draw, fill=black!100, inner sep=2pt, minimum width=2pt] (b0) at (4,0,0) {};
	\node [circle, draw, fill=black!100, inner sep=2pt, minimum width=2pt] (b1) at (6,0,0) {};
	\node [circle, draw, fill=black!100, inner sep=2pt, minimum width=2pt] (b2) at (4,2,0) {};
	\node [circle, draw, fill=black!100, inner sep=2pt, minimum width=2pt] (b3) at (4,0,2) {};
	\node [circle, draw, fill=black!100, inner sep=2pt, minimum width=2pt] (b4) at (6,2,0) {};
	\node [circle, draw, fill=black!100, inner sep=2pt, minimum width=2pt] (b5) at (6,0,2) {};
	\node [circle, draw, fill=black!100, inner sep=2pt, minimum width=2pt] (b6) at (4,2,2) {};
	\node [circle, draw, fill=black!100, inner sep=2pt, minimum width=2pt] (b7) at (6,2,2) {};

	\node [circle, draw, fill=black!100, inner sep=2pt, minimum width=2pt] (c0) at (0,2,0) {};
	\node [circle, draw, fill=black!100, inner sep=2pt, minimum width=2pt] (c1) at (2,2,0) {};
	\node [circle, draw, fill=black!100, inner sep=2pt, minimum width=2pt] (c2) at (0,4,0) {};
	\node [circle, draw, fill=black!100, inner sep=2pt, minimum width=2pt] (c3) at (0,2,2) {};
	\node [circle, draw, fill=black!100, inner sep=2pt, minimum width=2pt] (c4) at (2,4,0) {};
	\node [circle, draw, fill=black!100, inner sep=2pt, minimum width=2pt] (c5) at (2,2,2) {};
	\node [circle, draw, fill=black!100, inner sep=2pt, minimum width=2pt] (c6) at (0,4,2) {};
	\node [circle, draw, fill=black!100, inner sep=2pt, minimum width=2pt] (c7) at (2,4,2) {};

	\node [circle, draw, fill=black!100, inner sep=2pt, minimum width=2pt] (d0) at (2,2,0) {};
	\node [circle, draw, fill=black!100, inner sep=2pt, minimum width=2pt] (d1) at (4,2,0) {};
	\node [circle, draw, fill=black!100, inner sep=2pt, minimum width=2pt] (d2) at (2,4,0) {};
	\node [circle, draw, fill=black!100, inner sep=2pt, minimum width=2pt] (d3) at (2,2,2) {};
	\node [circle, draw, fill=black!100, inner sep=2pt, minimum width=2pt] (d4) at (4,4,0) {};
	\node [circle, draw, fill=black!100, inner sep=2pt, minimum width=2pt] (d5) at (4,2,2) {};
	\node [circle, draw, fill=black!100, inner sep=2pt, minimum width=2pt] (d6) at (2,4,2) {};
	\node [circle, draw, fill=black!100, inner sep=2pt, minimum width=2pt] (d7) at (4,4,2) {};

	\node [circle, draw, fill=black!100, inner sep=2pt, minimum width=2pt] (e0) at (4,2,0) {};
	\node [circle, draw, fill=black!100, inner sep=2pt, minimum width=2pt] (e1) at (6,2,0) {};
	\node [circle, draw, fill=black!100, inner sep=2pt, minimum width=2pt] (e2) at (4,4,0) {};
	\node [circle, draw, fill=black!100, inner sep=2pt, minimum width=2pt] (e3) at (4,2,2) {};
	\node [circle, draw, fill=black!100, inner sep=2pt, minimum width=2pt] (e4) at (6,4,0) {};
	\node [circle, draw, fill=black!100, inner sep=2pt, minimum width=2pt] (e5) at (6,2,2) {};
	\node [circle, draw, fill=black!100, inner sep=2pt, minimum width=2pt] (e6) at (4,4,2) {};
	\node [circle, draw, fill=black!100, inner sep=2pt, minimum width=2pt] (e7) at (6,4,2) {};

	%---EDGES---
 	 \draw   	 (0) -- (1) ;
 	 \draw   	 (0) -- (2) ;
 	 \draw   	 (0) -- (3) ;
	 \draw   	 (1) -- (4) ;
	 \draw   	 (1) -- (5) ;
	 \draw   	 (2) -- (4) ;
	 \draw   	 (2) -- (6) ;
	 \draw   	 (3) -- (5) ;
	 \draw   	 (3) -- (6) ;
 	 \draw   	 (4) -- (7) ;
 	 \draw   	 (5) -- (7) ;
 	 \draw   	 (6) -- (7) ;

 	 \draw   	 (a0) -- (a1) ;
 	 \draw   	 (a0) -- (a2) ;
 	 \draw   	 (a0) -- (a3) ;
	 \draw   	 (a1) -- (a4) ;
	 \draw   	 (a1) -- (a5) ;
	 \draw   	 (a2) -- (a4) ;
	 \draw   	 (a2) -- (a6) ;
	 \draw   	 (a3) -- (a5) ;
	 \draw   	 (a3) -- (a6) ;
 	 \draw   	 (a4) -- (a7) ;
 	 \draw   	 (a5) -- (a7) ;
 	 \draw   	 (a6) -- (a7) ;

 	 \draw   	 (b0) -- (b1) ;
 	 \draw   	 (b0) -- (b2) ;
 	 \draw   	 (b0) -- (b3) ;
	 \draw   	 (b1) -- (b4) ;
	 \draw   	 (b1) -- (b5) ;
	 \draw   	 (b2) -- (b4) ;
	 \draw   	 (b2) -- (b6) ;
	 \draw   	 (b3) -- (b5) ;
	 \draw   	 (b3) -- (b6) ;
 	 \draw   	 (b4) -- (b7) ;
 	 \draw   	 (b5) -- (b7) ;
 	 \draw   	 (b6) -- (b7) ;

 	 \draw   	 (c0) -- (c1) ;
 	 \draw   	 (c0) -- (c2) ;
 	 \draw   	 (c0) -- (c3) ;
	 \draw   	 (c1) -- (c4) ;
	 \draw   	 (c1) -- (c5) ;
	 \draw   	 (c2) -- (c4) ;
	 \draw   	 (c2) -- (c6) ;
	 \draw   	 (c3) -- (c5) ;
	 \draw   	 (c3) -- (c6) ;
 	 \draw   	 (c4) -- (c7) ;
 	 \draw   	 (c5) -- (c7) ;
 	 \draw   	 (c6) -- (c7) ;

 	 \draw   	 (d0) -- (d1) ;
 	 \draw   	 (d0) -- (d2) ;
 	 \draw   	 (d0) -- (d3) ;
	 \draw   	 (d1) -- (d4) ;
	 \draw   	 (d1) -- (d5) ;
	 \draw   	 (d2) -- (d4) ;
	 \draw   	 (d2) -- (d6) ;
	 \draw   	 (d3) -- (d5) ;
	 \draw   	 (d3) -- (d6) ;
 	 \draw   	 (d4) -- (d7) ;
 	 \draw   	 (d5) -- (d7) ;
 	 \draw   	 (d6) -- (d7) ;

 	 \draw   	 (e0) -- (e1) ;
 	 \draw   	 (e0) -- (e2) ;
 	 \draw   	 (e0) -- (e3) ;
	 \draw   	 (e1) -- (e4) ;
	 \draw   	 (e1) -- (e5) ;
	 \draw   	 (e2) -- (e4) ;
	 \draw   	 (e2) -- (e6) ;
	 \draw   	 (e3) -- (e5) ;
	 \draw   	 (e3) -- (e6) ;
 	 \draw   	 (e4) -- (e7) ;
 	 \draw   	 (e5) -- (e7) ;
 	 \draw   	 (e6) -- (e7) ;
	 
\end{tikzpicture}

 	& &
	
\begin{tikzpicture}[scale=0.6]

	\draw[fill=blue!25] (0,0,0) -- (2,0,0) -- (2,0,2) -- (0,0,2) -- cycle;
	\draw[fill=blue!25] (0,0,0) -- (0,2,0) -- (2,2,0) -- (2,0,0) -- cycle;
	\draw[fill=blue!25] (0,0,0) -- (0,2,0) -- (0,2,2) -- (0,0,2) -- cycle;
	\draw[fill=blue!25] (2,2,2) -- (2,2,0) -- (2,0,0) -- (2,0,2) -- cycle;
	\draw[fill=blue!25] (2,2,2) -- (2,2,0) -- (0,2,0) -- (0,2,2) -- cycle;
	\draw[fill=blue!25] (2,2,2) -- (2,0,2) -- (0,0,2) -- (0,2,2) -- cycle;

	\draw[fill=blue!25] (2,0,0) -- (4,0,0) -- (4,0,2) -- (2,0,2) -- cycle;
	\draw[fill=blue!25] (2,0,0) -- (2,2,0) -- (4,2,0) -- (4,0,0) -- cycle;
	\draw[fill=blue!25] (2,0,0) -- (2,2,0) -- (2,2,2) -- (2,0,2) -- cycle;
	\draw[fill=blue!25] (4,2,2) -- (4,2,0) -- (4,0,0) -- (4,0,2) -- cycle;
	\draw[fill=blue!25] (4,2,2) -- (4,2,0) -- (2,2,0) -- (2,2,2) -- cycle;
	\draw[fill=blue!25] (4,2,2) -- (4,0,2) -- (2,0,2) -- (2,2,2) -- cycle;

	\draw[fill=blue!25] (4,0,0) -- (6,0,0) -- (6,0,2) -- (4,0,2) -- cycle;
	\draw[fill=blue!25] (4,0,0) -- (4,2,0) -- (6,2,0) -- (6,0,0) -- cycle;
	\draw[fill=blue!25] (4,0,0) -- (4,2,0) -- (4,2,2) -- (4,0,2) -- cycle;
	\draw[fill=blue!25] (6,2,2) -- (6,2,0) -- (6,0,0) -- (6,0,2) -- cycle;
	\draw[fill=blue!25] (6,2,2) -- (6,2,0) -- (4,2,0) -- (4,2,2) -- cycle;
	\draw[fill=blue!25] (6,2,2) -- (6,0,2) -- (4,0,2) -- (4,2,2) -- cycle;
	
	\draw[fill=blue!25] (0,2,0) -- (2,2,0) -- (2,2,2) -- (0,2,2) -- cycle;
	\draw[fill=blue!25] (0,2,0) -- (0,4,0) -- (2,4,0) -- (2,2,0) -- cycle;
	\draw[fill=blue!25] (0,2,0) -- (0,4,0) -- (0,4,2) -- (0,2,2) -- cycle;
	\draw[fill=blue!25] (2,4,2) -- (2,4,0) -- (2,2,0) -- (2,2,2) -- cycle;
	\draw[fill=blue!25] (2,4,2) -- (2,4,0) -- (0,4,0) -- (0,4,2) -- cycle;
	\draw[fill=blue!25] (2,4,2) -- (2,2,2) -- (0,2,2) -- (0,4,2) -- cycle;

%	\draw[fill=blue!25] (2,0,0) -- (4,0,0) -- (4,0,2) -- (2,0,2) -- cycle;
	\draw[fill=blue!25, draw = blue!25] (2,3.65,0) -- (2,5.65,0) -- (4,5.65,0) -- (4,3.65,0) -- cycle;
%	\draw[fill=blue!25] (2,0,0) -- (2,2,0) -- (2,2,2) -- (2,0,2) -- cycle;
%	\draw[fill=blue!25] (4,2,2) -- (4,2,0) -- (4,0,0) -- (4,0,2) -- cycle;
	\draw[fill=blue!25,draw = blue!25] (4,5.65,2) -- (4,5.65,0) -- (2,5.65,0) -- (2,5.65,2) -- cycle;
	\draw[fill=blue!25,draw = blue!25] (4,5.65,2) -- (4,3.65,2) -- (2,3.65,2) -- (2,5.65,2) -- cycle;

	\draw[fill=blue!25] (4,2,0) -- (6,2,0) -- (6,2,2) -- (4,2,2) -- cycle;
	\draw[fill=blue!25] (4,2,0) -- (4,4,0) -- (6,4,0) -- (6,2,0) -- cycle;
	\draw[fill=blue!25] (4,2,0) -- (4,4,0) -- (4,4,2) -- (4,2,2) -- cycle;
	\draw[fill=blue!25] (6,4,2) -- (6,4,0) -- (6,2,0) -- (6,2,2) -- cycle;
	\draw[fill=blue!25] (6,4,2) -- (6,4,0) -- (4,4,0) -- (4,4,2) -- cycle;
	\draw[fill=blue!25] (6,4,2) -- (6,2,2) -- (4,2,2) -- (4,4,2) -- cycle;

	%---NODES---
	
	\node [circle, draw, fill=black!100, inner sep=2pt, minimum width=2pt] (0) at (0,0,0) {};
	\node [circle, draw, fill=black!100, inner sep=2pt, minimum width=2pt] (1) at (2,0,0) {};
	\node [circle, draw, fill=black!100, inner sep=2pt, minimum width=2pt] (2) at (0,2,0) {};
	\node [circle, draw, fill=black!100, inner sep=2pt, minimum width=2pt] (3) at (0,0,2) {};
	\node [circle, draw, fill=black!100, inner sep=2pt, minimum width=2pt] (4) at (2,2,0) {};
	\node [circle, draw, fill=black!100, inner sep=2pt, minimum width=2pt] (5) at (2,0,2) {};
	\node [circle, draw, fill=black!100, inner sep=2pt, minimum width=2pt] (6) at (0,2,2) {};
	\node [circle, draw, fill=black!100, inner sep=2pt, minimum width=2pt] (7) at (2,2,2) {};

	\node [circle, draw, fill=black!100, inner sep=2pt, minimum width=2pt] (a0) at (2,0,0) {};
	\node [circle, draw, fill=black!100, inner sep=2pt, minimum width=2pt] (a1) at (4,0,0) {};
	\node [circle, draw, fill=black!100, inner sep=2pt, minimum width=2pt] (a2) at (2,2,0) {};
	\node [circle, draw, fill=black!100, inner sep=2pt, minimum width=2pt] (a3) at (2,0,2) {};
	\node [circle, draw, fill=black!100, inner sep=2pt, minimum width=2pt] (a4) at (4,2,0) {};
	\node [circle, draw, fill=black!100, inner sep=2pt, minimum width=2pt] (a5) at (4,0,2) {};
	\node [circle, draw, fill=black!100, inner sep=2pt, minimum width=2pt] (a6) at (2,2,2) {};
	\node [circle, draw, fill=black!100, inner sep=2pt, minimum width=2pt] (a7) at (4,2,2) {};

	\node [circle, draw, fill=black!100, inner sep=2pt, minimum width=2pt] (b0) at (4,0,0) {};
	\node [circle, draw, fill=black!100, inner sep=2pt, minimum width=2pt] (b1) at (6,0,0) {};
	\node [circle, draw, fill=black!100, inner sep=2pt, minimum width=2pt] (b2) at (4,2,0) {};
	\node [circle, draw, fill=black!100, inner sep=2pt, minimum width=2pt] (b3) at (4,0,2) {};
	\node [circle, draw, fill=black!100, inner sep=2pt, minimum width=2pt] (b4) at (6,2,0) {};
	\node [circle, draw, fill=black!100, inner sep=2pt, minimum width=2pt] (b5) at (6,0,2) {};
	\node [circle, draw, fill=black!100, inner sep=2pt, minimum width=2pt] (b6) at (4,2,2) {};
	\node [circle, draw, fill=black!100, inner sep=2pt, minimum width=2pt] (b7) at (6,2,2) {};

	\node [circle, draw, fill=black!100, inner sep=2pt, minimum width=2pt] (c0) at (0,2,0) {};
	\node [circle, draw, fill=black!100, inner sep=2pt, minimum width=2pt] (c1) at (2,2,0) {};
	\node [circle, draw, fill=black!100, inner sep=2pt, minimum width=2pt] (c2) at (0,4,0) {};
	\node [circle, draw, fill=black!100, inner sep=2pt, minimum width=2pt] (c3) at (0,2,2) {};
	\node [circle, draw, fill=black!100, inner sep=2pt, minimum width=2pt] (c4) at (2,4,0) {};
	\node [circle, draw, fill=black!100, inner sep=2pt, minimum width=2pt] (c5) at (2,2,2) {};
	\node [circle, draw, fill=black!100, inner sep=2pt, minimum width=2pt] (c6) at (0,4,2) {};
	\node [circle, draw, fill=black!100, inner sep=2pt, minimum width=2pt] (c7) at (2,4,2) {};

	\node [circle, draw, fill=black!100, inner sep=2pt, minimum width=2pt] (e0) at (4,2,0) {};
	\node [circle, draw, fill=black!100, inner sep=2pt, minimum width=2pt] (e1) at (6,2,0) {};
	\node [circle, draw, fill=black!100, inner sep=2pt, minimum width=2pt] (e2) at (4,4,0) {};
	\node [circle, draw, fill=black!100, inner sep=2pt, minimum width=2pt] (e3) at (4,2,2) {};
	\node [circle, draw, fill=black!100, inner sep=2pt, minimum width=2pt] (e4) at (6,4,0) {};
	\node [circle, draw, fill=black!100, inner sep=2pt, minimum width=2pt] (e5) at (6,2,2) {};
	\node [circle, draw, fill=black!100, inner sep=2pt, minimum width=2pt] (e6) at (4,4,2) {};
	\node [circle, draw, fill=black!100, inner sep=2pt, minimum width=2pt] (e7) at (6,4,2) {};

	%---EDGES---
 	 \draw   	 (0) -- (1) ;
 	 \draw   	 (0) -- (2) ;
 	 \draw   	 (0) -- (3) ;
	 \draw   	 (1) -- (4) ;
	 \draw   	 (1) -- (5) ;
	 \draw   	 (2) -- (4) ;
	 \draw   	 (2) -- (6) ;
	 \draw   	 (3) -- (5) ;
	 \draw   	 (3) -- (6) ;
 	 \draw   	 (4) -- (7) ;
 	 \draw   	 (5) -- (7) ;
 	 \draw   	 (6) -- (7) ;

 	 \draw   	 (a0) -- (a1) ;
 	 \draw   	 (a0) -- (a2) ;
 	 \draw   	 (a0) -- (a3) ;
	 \draw   	 (a1) -- (a4) ;
	 \draw   	 (a1) -- (a5) ;
	 \draw   	 (a2) -- (a4) ;
	 \draw   	 (a2) -- (a6) ;
	 \draw   	 (a3) -- (a5) ;
	 \draw   	 (a3) -- (a6) ;
 	 \draw   	 (a4) -- (a7) ;
 	 \draw   	 (a5) -- (a7) ;
 	 \draw   	 (a6) -- (a7) ;

 	 \draw   	 (b0) -- (b1) ;
 	 \draw   	 (b0) -- (b2) ;
 	 \draw   	 (b0) -- (b3) ;
	 \draw   	 (b1) -- (b4) ;
	 \draw   	 (b1) -- (b5) ;
	 \draw   	 (b2) -- (b4) ;
	 \draw   	 (b2) -- (b6) ;
	 \draw   	 (b3) -- (b5) ;
	 \draw   	 (b3) -- (b6) ;
 	 \draw   	 (b4) -- (b7) ;
 	 \draw   	 (b5) -- (b7) ;
 	 \draw   	 (b6) -- (b7) ;

 	 \draw   	 (c0) -- (c1) ;
 	 \draw   	 (c0) -- (c2) ;
 	 \draw   	 (c0) -- (c3) ;
	 \draw   	 (c1) -- (c4) ;
	 \draw   	 (c1) -- (c5) ;
	 \draw   	 (c2) -- (c4) ;
	 \draw   	 (c2) -- (c6) ;
	 \draw   	 (c3) -- (c5) ;
	 \draw   	 (c3) -- (c6) ;
 	 \draw   	 (c4) -- (c7) ;
 	 \draw   	 (c5) -- (c7) ;
 	 \draw   	 (c6) -- (c7) ;

 	 \draw   	 (e0) -- (e1) ;
 	 \draw   	 (e0) -- (e2) ;
 	 \draw   	 (e0) -- (e3) ;
	 \draw   	 (e1) -- (e4) ;
	 \draw   	 (e1) -- (e5) ;
	 \draw   	 (e2) -- (e4) ;
	 \draw   	 (e2) -- (e6) ;
	 \draw   	 (e3) -- (e5) ;
	 \draw   	 (e3) -- (e6) ;
 	 \draw   	 (e4) -- (e7) ;
 	 \draw   	 (e5) -- (e7) ;
 	 \draw   	 (e6) -- (e7) ;

	\node [circle, draw, fill=black!00, inner sep=2pt, minimum width=2pt] (d0) at (2,3.65,0) {};
	\node [circle, draw, fill=black!00, inner sep=2pt, minimum width=2pt] (d1) at (4,3.65,0) {};
	\node [circle, draw, fill=black!00, inner sep=2pt, minimum width=2pt] (d2) at (2,5.65,0) {};
	\node [circle, draw, fill=black!00, inner sep=2pt, minimum width=2pt] (d3) at (2,3.65,2) {};
	\node [circle, draw, fill=black!00, inner sep=2pt, minimum width=2pt] (d4) at (4,5.65,0) {};
	\node [circle, draw, fill=black!00, inner sep=2pt, minimum width=2pt] (d5) at (4,3.65,2) {};
	\node [circle, draw, fill=black!00, inner sep=2pt, minimum width=2pt] (d6) at (2,5.65,2) {};
	\node [circle, draw, fill=black!00, inner sep=2pt, minimum width=2pt] (d7) at (4,5.65,2) {};

 	 \draw[dashed]   	 (d0) -- (d1) ;
 	 \draw[dashed]   	 (d0) -- (d2) ;
 	 \draw[dashed]   	 (d0) -- (d3) ;
	 \draw[dashed]   	 (d1) -- (d4) ;
	 \draw[dashed]   	 (d1) -- (d5) ;
	 \draw   	 (d2) -- (d4) ;
	 \draw[dashed]   	 (d2) -- (d6) ;
	 \draw[dashed]   	 (d3) -- (d5) ;
	 \draw[dashed]   	 (d3) -- (d6) ;
 	 \draw[dashed]   	 (d4) -- (d7) ;
 	 \draw[dashed] 	 (d5) -- (d7) ;
 	 \draw  	 (d6) -- (d7) ;
	 
	 %---LABELS----
	 \node at (1.1,1,1) {\tiny$F_{(0,0,0)}$} ;
	 \node at (3.1,1,1) {\tiny$F_{(0,1,0)}$} ;
	 \node at (5.1,1,1) {\tiny$F_{(0,2,0)}$} ;
	 \node at (1.1,3,1) {\tiny$F_{(0,0,1)}$} ;
	 \node at (3.1,4.8,1) {\tiny$F_{(0,1,1)}$} ;
	 \node at (5.1,3,1) {\tiny$F_{(0,2,1)}$} ;

\end{tikzpicture}

	\\
\end{tabular}
\vspace{-0.2cm}
\caption{The pile of cubes $P_3(1,3,2)$ (on the left) and the final step in the shelling from Example~\ref{ex: a non-stable shelling}(on the right).  Since the last relative complex is not stable, then neither is this shelling.}
\end{figure}
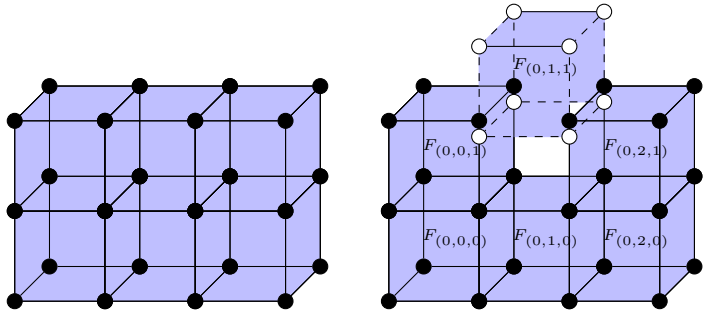

%---EXAMPLE: A non-stable shelling---
\begin{example}[A non-stable shelling]
\label{ex: a non-stable shelling}
While the pile of cubes $\mathcal{P}_d(a_1,\ldots,a_d)$ always admits a stable shelling (as seen in Example~\ref{ex: piles of cubes}), there exist piles of cubes with shellings that are not stable.  
For instance, the pile of cubes $P_3(1,3,2)$, depicted in Figure~\ref{fig: nonstable shelling}, has six facets
$
F_{(0,0,0)},F_{(0,1,0)},F_{(0,2,0)},F_{(0,0,1)},F_{(0,1,1)}, \mbox{ and } F_{(0,2,1)}.
$
The linear ordering of these facets $(F_{(0,0,0)},F_{(0,1,0)},F_{(0,2,0)},F_{(0,0,1)},F_{(0,2,1)},F_{(0,1,1)})$ is a shelling order of $P_3(1,3,2)$.  
However, the relative complex associated to $F_{(0,1,1)}$ by this order is the table-top complex depicted in the bottom-right of Figure~\ref{fig: relative 3-cubes}.  
Since this complex is not stable, then neither is this shelling of $P_3(1,3,2)$.  
\end{example}

The previous observations demonstrate that some of the classic examples of cubical complexes admit stable shellings but also that not all shellings of these complexes are stable.  
In the coming sections, we will show that more complicated examples of cubical complexes admit stable shellings, and we will use this fact, together with Theorem~\ref{thm: subdivisions of shellable complexes} to provide some answers to open questions in the literature.  
However, while we can prove the existence of stable shellings in the desired cases, it is not clear if they exist in all cases.  
So we end this section with the following question. 
%---QUESTION: Existence---
\begin{question}
\label{quest: existence}
Does there exist a shellable cubical complex $\CC$ for which no shelling is stable?
\end{question}

%---Applications
\section{Applications}
\label{sec: apps}
In this section, we apply Theorem~\ref{thm: subdivisions of shellable complexes} and the notion of stable shellings to some classical subdivisions of the boundary complexes of polytopes that are of interest in algebraic, geometric, and topological combinatorics. 
In Subsection~\ref{subsec: barycentric cubical}, we show that the barycentric subdivision of a cubical complex admitting a stable line shelling has a real-rooted $h$-polynomial.  
%In Subsection~\ref{subsec: barycentric cubical}, we show that the barycentric subdivision of the boundary complexes of the well-known constructions of {\em cubical polytopes} (i.e., a polytope whose facets are all cubes) have real-rooted $h$-polynomials.  
%This positively answers a question first raised by Brenti and Welker in \cite[Question 1]{BW08} for all known examples of cubical polytopes:
Applying this result, we positively answer a question of Brenti and Welker \cite[Question 1]{BW08} for the well-known constructions of {\em cubical polytopes}; i.e., polytopes whose facets are all cubes.
In its most general form, the question is as follows:
%---PROBLEM: Subdivisions of Polytopes----
\begin{problem}
\cite[Question 1]{BW08}
%\cite[Problem 35]{MW13}
\label{prob: subdivisions of polytopes}
Let $\CC$ be the boundary complex of an arbitrary polytope. 
Is the $h$-polynomial of the barycentric subdivision of $\CC$ real-rooted?  
\end{problem}
In \cite{MW13}, cubical polytopes are proposed as the first case of interest, as the results of \cite{BW08} already answered the question in the case of simplicial (and simple) polytopes.  
Within the literature on cubical polytopes there are surprisingly few explicitly constructed cubical polytopes. 
The most well-known constructions are the {\em cuboids}, which were first introduced by Gr\"unbaum in \cite{G67}, the {\em capped cubical polytopes} \cite{J93}, which are a cubical generalization of stacked simplicial polytopes \cite{K04}, and the {\em neighborly cubical polytopes} \cite{BBC97}.  
Constructing cubical polytopes is, in general, a nontrivial task, as noted in \cite{JZ00} and the thesis \cite{S04}.  
However, by applying Theorem~\ref{thm: subdivisions of shellable complexes} and the notion of stable shellings, in Subsection~\ref{subsec: barycentric cubical} we will be able to positively answer Problem~\ref{prob: subdivisions of polytopes} for all three of these constructions.  

At the same time, Theorem~\ref{thm: subdivisions of shellable complexes} and the associated stable shellings can also be applied to subdivisions other than the barycentric subdivision.  
In Subsections~\ref{subsec: edgewise cubical} and~\ref{subsec: edgewise simplicial}, we apply these techniques to the edgewise subdivision of simplicial and cubical complexes so as to solve a second problem of Mohammadi and Welker \cite[Problem 27]{MW13} for shellable complexes.
We also observe that a (non-geometric) solution to this problem follows from a recent result of Jochemko \cite{J16}. 

In the following, we will make use of some well-studied real-rooted polynomials, which can be defined as follows:
For $d,r\geq 1$ and $0\leq \ell\leq d$, let $A_{d,\ell}^{(r)}$ be the polynomial defined by the relation
\begin{equation}
\label{eqn: colored}
\sum_{t\geq0}(rt)^\ell(rt+1)^{d-\ell}x^t = \frac{A_{d,\ell}^{(r)}}{(1-x)^{d+1}}.
\end{equation}
We call $A_{d,\ell}^{(r)}$ the {\em $d^{th}$ $r$-colored $\ell$-Eulerian polynomial}.  
When $r = 1$ and $\ell = 0$, $A_{d,\ell}^{(r)}$ is the classical Eulerian polynomial, which enumerates the elements of $\mathfrak{S}_d$ by the excedance statistic.  
When $r = 2$ and $\ell = 0$, $A_{d,\ell}^{(r)}$ is the {\em Type B Eulerian polynomial}, which enumerates signed permutations.
For $d\geq 1$, the polynomials $A_{d,0}^{(1)}$ and $A_{d,0}^{(2)}$ are symmetric with respect to degree $d-1$ and $d$, respectively.  
When $\ell = 0$ and $r\geq1$, $A_{d,\ell}^{(r)}$ is the {\em $d^{th}$ colored Eulerian polynomial}, which enumerates the the elements of the wreath product $\Z_r\wr\mathfrak{S}_d$ with respect to their excedance statistic (see \cite[Section 3]{BS19}, for example).  
It is an immediate consequence of \cite[Theorem 4.6]{B06} that $A_{d,\ell}^{(r)}$ has only real, simple zeros for all $d,r\geq1$ and $0\leq \ell \leq d$.  
%---LEMMA: Colored Eulerian---
\begin{lemma}
\label{lem: colored eulerian}
For $d,r\geq 1$ and $0\leq \ell\leq d$, the polynomial $A_{d,\ell}^{(r)}$ has only simple, real zeros.  
Moreover, for a fixed $d,r\geq1$, $\left(A_{d,\ell}^{(r)}\right)_{\ell=0}^d$ is an interlacing sequence. 
\end{lemma}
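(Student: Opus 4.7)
The real-rootedness and simplicity of the zeros of each $A_{d,\ell}^{(r)}$ are immediate upon applying \cite[Theorem 4.6]{B06} to the polynomial in $t$ given by $p_\ell(t) = (rt)^\ell(rt+1)^{d-\ell}$, whose zeros are $0$ (with multiplicity $\ell$) and $-1/r$ (with multiplicity $d-\ell$), both non-positive. The real content lies in establishing that the full sequence $(A_{d,\ell}^{(r)})_{\ell=0}^d$ is interlacing.

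My plan is to argue by induction on $d$. The base case $d = 1$ is a direct computation from the definition: $A_{1,0}^{(r)}(x) = 1 + (r-1)x$ and $A_{1,1}^{(r)}(x) = rx$ have unique zeros at $-1/(r-1)$ (or no zero, if $r=1$) and $0$ respectively, and these arrangements satisfy the criterion of Remark~\ref{rmk: interlacing}. For the inductive step, I would differentiate the defining generating-function identity and apply the operator $rx\,\tfrac{d}{dx}$ to $\sum_{t\geq 0}p_\ell(t)x^t$ to obtain the pair of recursions
\begin{align*}
A_{d+1,\ell}^{(r)}(x) &= [\,1+(r(d+1)-1)x\,]\,A_{d,\ell}^{(r)}(x) + rx(1-x)\bigl(A_{d,\ell}^{(r)}\bigr)'(x),\\
A_{d+1,\ell+1}^{(r)}(x) &= r(d+1)x\,A_{d,\ell}^{(r)}(x) + rx(1-x)\bigl(A_{d,\ell}^{(r)}\bigr)'(x).
\end{align*}
These share the common derivative term $rx(1-x)(A_{d,\ell}^{(r)})'$, while their multiplicative pieces differ by $(1-x)$; subtracting yields the telescoping identity $A_{d+1,\ell}^{(r)} - A_{d+1,\ell+1}^{(r)} = (1-x)\,A_{d,\ell}^{(r)}$. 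Combining this structure with the standard interlacing preservers for real-rooted polynomials with positive leading coefficients (namely $p'\prec p$ and compatibility of multiplication by a nonnegative linear factor with $p$), and invoking Proposition~\ref{prop: convex cones} to handle the additive combinations, lifts the interlacing from the $d$-th level to the $(d+1)$-st.

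The main obstacle is that the notion of interlacing sequence from Subsection~\ref{subsec: interlacing polynomials} requires the strong pairwise condition $A_{d,i}^{(r)} \prec A_{d,j}^{(r)}$ for \emph{all} $0\leq i\leq j\leq d$, rather than the easier consecutive condition. To handle this, I would pass to the bivariate polynomial
\[
F_d(x,y) := \sum_{\ell=0}^d A_{d,\ell}^{(r)}(x)\,y^\ell,
\]
and use the operator recursions above to show by induction that $F_d$ is stable in the pair $(x,y)$. The Hermite--Biehler Theorem (Theorem~\ref{thm: hermite-biehler}), applied after substituting real values of one variable into $F_d$ and invoking convex-combination closure, then recovers the full pairwise interlacing of the sequence $(A_{d,\ell}^{(r)})_{\ell=0}^d$ from the bivariate stability of $F_d$.
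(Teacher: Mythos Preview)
The paper does not give a proof of this lemma at all beyond the one sentence preceding it: both the simple real zeros \emph{and} the interlacing-sequence claim are taken as an immediate consequence of \cite[Theorem~4.6]{B06}. The point is that the input polynomials $p_\ell(t)=(rt)^\ell(rt+1)^{d-\ell}$ already form an interlacing sequence in $t$ (their zeros sit at $-1/r$ and $0$ with multiplicities $d-\ell$ and $\ell$, and one checks $p_i\prec p_j$ for $i\le j$ directly from Remark~\ref{rmk: interlacing}), and the cited theorem of Br\"and\'en asserts that the passage from $p$ to the numerator of $\sum_{t\ge 0}p(t)x^t$ preserves interlacing. So the ``real content'' you identify is, in the paper's treatment, absorbed into a black-box citation.

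Your alternative route---recursion in $d$ plus bivariate stability of $F_d(x,y)=\sum_\ell A_{d,\ell}^{(r)}(x)y^\ell$---is a genuinely different and more self-contained strategy, and your two recursions are correct (they follow exactly as you say, by applying $rx\,d/dx$ and $1+rx\,d/dx$ to the defining series). What is not yet in place is the inductive step itself: the two recursions express $A_{d+1,\ell}$ and $A_{d+1,\ell+1}$ in terms of the \emph{same} $A_{d,\ell}$, so neither alone packages into a single stability-preserving operator $F_d\mapsto F_{d+1}$; one obtains $F_{d+1}$ only after handling a boundary term ($A_{d+1,0}$ or $A_{d+1,d+1}$) separately, and you have not indicated how stability survives that. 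This is fixable, but it is the crux of the argument and should be written out.

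Separately, your concern that interlacing sequences demand the full pairwise condition $A_{d,i}^{(r)}\prec A_{d,j}^{(r)}$ for all $i\le j$, rather than just consecutive relations, is less of an obstacle than you suggest: the reduction lemma \cite[Lemma~2.3]{B06} (used later in this very paper, in the proof of Theorem~\ref{thm: shellable cubical complexes}) lets one verify an interlacing sequence from a small set of pairwise checks. So a direct induction showing $A_{d,\ell}^{(r)}\prec A_{d,\ell+1}^{(r)}$ together with an endpoint condition would already suffice, without the bivariate detour.
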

Lemma~\ref{lem: colored eulerian} will play a key role in the coming subsections.

%---SUBSECTION: The Barycentric Subdivision of Cubical Complexes----
\subsection{The barycentric subdivision of cubical complexes}
\label{subsec: barycentric cubical}
Given a polytopal complex $\CC$, let $L(\CC)$ denote its face lattice with partial order $<_\CC$ given by inclusion.  
The {\em barycentric subdivision} of $\CC$ is the simplicial complex $\sd(\CC)$ whose $k$-dimensional faces are the subsets $\{F_0,F_1,\ldots,F_k\}$ of faces of $\CC$ for which 
\[
\emptyset<_\CC F_0<_\CC F_1 <_\CC \cdots <_\CC F_k
\]
is a strictly increasing chain in $L(\CC)$.  
Our goal in this subsection is to show that $h(\sd(\CC);x)$ is real-rooted when $\CC$ is a cubical complex admitting a stable shelling.  
To do so, we need to first consider the $h$-polynomials of relative complexes of barycentrically subdivided cubes.
To determine these polynomials, we will make use of the lemmata developed in Subsection~\ref{subsec: ehrhart theory}.
%Since the results in this section pertain to lattice polytopes, which are embedded in real-Euclidean space, we will need to use a geometric realization of the barycentric subdivision of the $d$-cube.  
In the following, we will let $\square_d$ denote the (abstract) $d$-dimensional cube.
The following result is well-known.
%---LEMMA: h-polynomial of boundary---
\begin{lemma}
\label{lem: h-polynomial of boundary}
For $d\geq1$ we have that
\[
h(\sd(\partial\square_d);x) = A_{d,0}^{(2)} = h(\sd(\square_d);x).
\]
\end{lemma}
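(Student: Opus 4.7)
The plan is to prove the two equalities separately: first compute $h(\sd(\square_d);x) = A_{d,0}^{(2)}$ via Ehrhart theory, and then relate $h(\sd(\partial\square_d);x)$ to $h(\sd(\square_d);x)$ via a coning argument.

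For the Ehrhart computation, I would realize the abstract cube $\square_d$ as the lattice polytope $P = [0,2]^d \subset \R^d$. Since $i(P;t) = (2t+1)^d$, the defining identity \eqref{eqn: colored} with $r=2$ and $\ell = 0$ immediately gives $h^\ast(P;x) = A_{d,0}^{(2)}$. The geometric heart of the argument is to show that the barycentric subdivision of $P$, realized with each vertex placed at the centroid of the corresponding face of $P$, is a unimodular lattice triangulation. Centroids of faces of $P$ lie in $\{0,1,2\}^d$ (a free coordinate contributes $1$, a fixed coordinate contributes $0$ or $2$), and along a complete flag $F_0 \subset F_1 \subset \cdots \subset F_d$ of faces of $P$, the passage from the centroid of $F_i$ to that of $F_{i+1}$ changes exactly one coordinate --- the newly freed one --- from $0$ or $2$ to $1$. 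Hence the successive edge vectors of each maximal simplex are of the form $\pm e_{j_1}, \ldots, \pm e_{j_d}$ with the indices $j_k$ distinct, so the edge-vector matrix is a signed permutation matrix and the simplex is unimodular. Applying Lemma~\ref{lem: whole polytope} yields $h(\sd(\square_d);x) = h^\ast(P;x) = A_{d,0}^{(2)}$.

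For the second equality, observe that $\sd(\square_d)$ is a simplicial cone over $\sd(\partial\square_d)$ with apex $v$ the vertex of $\sd(\square_d)$ associated to the top element $\square_d \in L(\square_d)$. A simplex of $\sd(\square_d)$ corresponds to a chain $\emptyset <_\CC F_0 < \cdots < F_k$; it lies in $\sd(\partial\square_d)$ precisely when $F_k \neq \square_d$, and otherwise it is obtained by adjoining $v$ to the simplex of $\sd(\partial\square_d)$ indexed by $\emptyset <_\CC F_0 < \cdots < F_{k-1}$ (which is empty if $k=0$). A direct calculation gives $f(v \ast \Delta;x) = (1+x)f(\Delta;x)$, and the simultaneous increase in dimension by $1$ absorbs the extra factor $(1+x/(1-x)) = 1/(1-x)$ when one passes to $h$-polynomials, yielding $h(v \ast \Delta;x) = h(\Delta;x)$ for any simplicial complex $\Delta$. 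Applying this with $\Delta = \sd(\partial\square_d)$ gives $h(\sd(\partial\square_d);x) = h(\sd(\square_d);x) = A_{d,0}^{(2)}$, completing the proof.

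The only nonroutine step is the unimodularity verification in the Ehrhart argument; once that signed-permutation-matrix observation is in hand, both the application of Lemma~\ref{lem: whole polytope} and the coning identity are formal.
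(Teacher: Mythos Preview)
Your proof is correct. The coning argument you give for $h(\sd(\partial\square_d);x) = h(\sd(\square_d);x)$ is exactly the paper's, but the two proofs handle the remaining equality differently: the paper simply cites $h(\sd(\partial\square_d);x) = A_{d,0}^{(2)}$ as well-known (from \cite{MW13}) and then cones up, whereas you prove $h(\sd(\square_d);x) = A_{d,0}^{(2)}$ directly via Ehrhart theory on $[0,2]^d$ and then cone down. Your unimodularity check (signed-permutation edge matrix along a flag of faces) is essentially a reorganization of the paper's later Lemma~\ref{lem: geometric realization} on the $[-1,1]^d$ model, combined with Lemma~\ref{lem: whole polytope}; the paper defers that machinery to the proof of Theorem~\ref{thm: shellable cubical complexes} rather than folding it into this lemma. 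Your route is more self-contained here at the cost of front-loading the geometric work; the paper's is shorter but relies on an external citation.
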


\begin{proof}
The first equality is noted to be well-known in \cite{MW13}. 
The second equality follows from the general fact that if $\CC$ is a simplicial complex and $\DD$ is the simplicial complex produced by coning over $\CC$ then $h(\CC;x) = h(\DD;x)$.
\end{proof}

We will also let $[-1,1]^d\subset\R^d$ denote the geometric realization of $\square_d$ in $d$-dimensional real-Euclidean space as the convex hull of all $(-1,1)$-vectors in $\R^d$.  
The following lemma is likely well-known to experts in the field, but we include a proof for the sake of completeness.
%---LEMMA: Geometric Realization----
\begin{lemma}
\label{lem: geometric realization}
Let $T_d$ denote the triangulation of the $d$-cube $[-1,1]^d$ that is induced by the hyperplanes $x_i = \pm x_j$ for $0\leq i<j\leq d$ and $x_i = 0$ for $i\in[d]$.  
Then $T_d$ is abstractly isomorphic to the barycentric subdivision of the $d$-cube.
\end{lemma}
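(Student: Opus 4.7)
The plan is to exhibit an explicit simplicial isomorphism $\Phi: T_d \to \sd(\square_d)$ arising from a natural bijection at the vertex level. First, I would identify the vertex set of $T_d$: a vertex is a $0$-dimensional intersection of hyperplanes of the arrangement together with bounding facets of $[-1,1]^d$, and a routine inspection of these constraints shows that every such intersection has all coordinates in $\{-1,0,+1\}$. Conversely, any point $\epsilon \in V := \{-1,0,+1\}^d$ lies on $d$ independent constraints, namely $x_i = \epsilon_i$ (a bounding facet of $[-1,1]^d$) for $i \in \supp(\epsilon)$ together with $x_i = 0$ for $i \notin \supp(\epsilon)$, so $\epsilon$ is a vertex of $T_d$. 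Hence $T_d$ has exactly $3^d$ vertices, naturally indexed by $V$.

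The standard description of the face lattice $L(\square_d)$ identifies its nonempty elements with $V$ via the map sending $\epsilon$ to the face cut out by requiring $x_i = \epsilon_i$ for $i \in \supp(\epsilon)$; under this identification, face containment $\Phi(\epsilon) \subseteq \Phi(\epsilon')$ translates to the ``compatibility'' relation $\supp(\epsilon') \subseteq \supp(\epsilon)$ together with $\epsilon|_{\supp(\epsilon')} = \epsilon'|_{\supp(\epsilon')}$. To promote $\Phi$ to a simplicial isomorphism, I would parametrize the maximal simplices of $T_d$ by pairs $(\epsilon,\sigma) \in \{\pm 1\}^d \times \mathfrak{S}_d$, with
\[
\Delta_{\epsilon,\sigma} := \{x \in [-1,1]^d :\ 1 \geq \epsilon_{\sigma(1)} x_{\sigma(1)} \geq \cdots \geq \epsilon_{\sigma(d)} x_{\sigma(d)} \geq 0\}.
\]
Fixing an orthant (the choice of $\epsilon$) and then an ordering of the nonnegative quantities $\epsilon_i x_i$ (the choice of $\sigma$) partitions $[-1,1]^d$ into $2^d d!$ such simplices, matching the count expected from the arrangement.

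Saturating all but one of the $d+1$ defining inequalities of $\Delta_{\epsilon,\sigma}$ produces its vertices $v_0, v_1, \ldots, v_d$, where $v_k$ has coordinate $\sigma(i)$ equal to $\epsilon_{\sigma(i)}$ for $i \leq k$ and equal to $0$ for $i > k$. Each $v_k$ lies in $V$, and under $\Phi$ these map to a strictly descending chain $\square_d = \Phi(v_0) \supsetneq \Phi(v_1) \supsetneq \cdots \supsetneq \Phi(v_d)$ of codimensions $0,1,\ldots,d$ in $L(\square_d)$, that is, to a maximal simplex of $\sd(\square_d)$. Conversely, any maximal chain in $L(\square_d)$ determines a unique pair $(\epsilon,\sigma)$ by reading off the sequence of codimension-one coordinate fixings along the chain. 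Since both $T_d$ and $\sd(\square_d)$ are pure simplicial complexes of the same dimension and $\Phi$ sets up a bijection between their maximal simplices compatible with the vertex-level bijection, it extends to the desired abstract simplicial isomorphism. The main technical obstacle is the explicit vertex computation for $\Delta_{\epsilon,\sigma}$; it is a routine but slightly fiddly case analysis on which defining inequality is left unsaturated, and once carried out the rest of the argument is formal.
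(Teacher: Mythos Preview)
Your proof is correct and takes a genuinely different route from the paper's. The paper proceeds by induction on $d$: it checks that the restriction of $T_d$ to any facet of $[-1,1]^d$ is (up to relabelling coordinates) a copy of $T_{d-1}$, invokes the inductive hypothesis to identify this with the barycentric subdivision of that facet, and then observes that since all the cutting hyperplanes pass through the origin, $T_d$ is the cone from the origin over the induced boundary subdivision --- which is exactly how $\sd(\square_d)$ is built as the cone over $\sd(\partial\square_d)$. Your argument is direct rather than inductive: you pin down the vertex set of $T_d$ as $\{-1,0,1\}^d$, match each such $\epsilon$ with the face of the cube whose barycenter it is, parametrize the $2^d d!$ maximal simplices by signed permutations $(\epsilon,\sigma)$, and verify that the vertex bijection carries facets of $T_d$ to maximal chains in $L(\square_d)$. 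The inductive proof is shorter and more conceptual, leaning on a structural property (both sides are cones over their boundaries) rather than any computation. Your explicit approach costs a bit more bookkeeping but delivers the signed-permutation indexing of the top simplices for free, which is precisely the combinatorics underpinning the $h^\ast$-polynomial formulas used downstream.
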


\begin{proof}
We will induct on $d$.  
Observe that the result holds for the base case of $d=1$, and suppose it holds up to some $d-1$.  
We first consider the triangulation $T_d$ of $[-1,1]^d$ restricted to each facet $F$ of the cube.  
Note that $F$ is a $(d-1)$-cube and can be described by intersecting $[-1,1]^d$ with either the hyperplane $x_i = 1$ or $x_i = -1$ for some fixed $i \in [d]$.  
We now focus on how the hyperplanes defining our triangulation intersect $F$.  
First observe that the induced triangulation of $F$ by $T_d$ is defined by hyperplanes of the form $x_i = \pm x_j$, for $j \neq i$. 
Notice that the hyperplanes that intersect the interior of $F$ are exactly those of the form $x_j = \pm x_k$ or $x_j = 0$, for $j,k \neq i$. 
Up to a possible change of coordinates, we see that this subdivision when restricted to $F$ is exactly the subdivision $T_{d-1}$ of the $(d-1)$-cube $[-1,1]^{d-1}$.
By our inductive hypothesis, this is abstractly isomorphic to the barycentric subdivision of $F$.  
Thus, our given triangulation induces a barycentric subdivision of the boundary complex of $[-1,1]^d$.  
We now investigate how the given hyperplanes subdivide $[-1,1]^d$ as a whole.  
Since these hyperplanes meet at a unique point in the interior of $[-1,1]^d$ (the origin), the subdivision of $[-1,1]^d$ can be described by taking the induced subdivision of the boundary complex and coning over an interior point.  
Since the barycentric subdivision of a polytope is given by coning over the barycentric subdivision of its boundary, the result follows.
\end{proof}

The triangulation $T_d$ from Lemma~\ref{lem: geometric realization} has an $h$-polynomial with a well-known combinatorial interpretation:
A {\em signed permutation} on $[d]$ is a pair $(\pi,\varepsilon)\in\mathfrak{S}_d\times \{-1,1\}^d$, which we sometimes denote as $\pi_1^{\varepsilon_1}\cdots\pi_d^{\varepsilon_d}$, where $\pi = \pi_1\cdots\pi_d$ and $\varepsilon = (\varepsilon_1,\ldots,\varepsilon_d)$. 
Set $\pi_0:=0$ and $\varepsilon_0:=1$ for all $(\pi,\varepsilon)\in \mathfrak{S}_d\times \{-1,1\}^d$ and all $d\geq 1$.  
Then $i\in[d-1]_0 :=\{0,1,\ldots,d-1\}$ is a {\em descent} of $(\pi,\varepsilon)$ if $\varepsilon_i\pi_i > \varepsilon_{i+1}\pi_{i+1}$.
We also let
\begin{equation*}
\begin{split}
\Des(\pi,\varepsilon) &:= \{i\in[d-1]_0 : \varepsilon_i\pi_i > \varepsilon_{i+1}\pi_{i+1}\}, \text{ and } \\
\des(\pi,\varepsilon) &:= |\Des(\pi,\varepsilon)|.	\\
\end{split}
\end{equation*}
Let $0\leq \ell \leq d$.
Going one step further, we define the {\em $\ell$-descent set} of $(\pi,\varepsilon)$ to be
\[
\Des_\ell(\pi,\varepsilon) :=
\begin{cases}
\Des(\pi,\varepsilon)\cup\{d\} 		&	\text{ if $d+1-\ell \leq \varepsilon_d\pi_d\leq d$, }	\\
\Des(\pi,\varepsilon)				& 	\text{ otherwise. }						\\
\end{cases}
\]
We then let $\des_\ell(\pi,\varepsilon) :=|\Des_\ell(\pi,\varepsilon)|$.
The {\em Type B $\ell$-Eulerian polynomial} is defined as
\[
B_{d,\ell} := \sum_{(\pi,\varepsilon)\in\mathfrak{S}_d\times\{-1,1\}^d : \varepsilon_d\pi_d = d+1-\ell}x^{\des_\ell(\pi,\varepsilon)}.
\]
%For $0\leq \ell\leq d$ we define the half-open polytope 
%\[
%[-1,1]^d_\ell :=
%[-1,1]^d_\ell\setminus\{x_d=1,\ldots,x_{d+1-\ell}=1\},
%\]
%and for $d+1\leq \ell\leq 2d$ we define it to be
%\[
%[-1,1]^d_\ell :=
%[-1,1]^d_\ell\setminus\{x_d=1,\ldots,x_{1}=1\}\cup\{x_d=-1,\ldots,x_{2d+1-\ell}=1\}.
%\]
For $0\leq \ell \leq d$, we can then make use of the following theorem from \cite{BJM16}:
%---THEOREM: BJM----
\begin{theorem}
\cite[Theorem 5.1]{BJM16}
\label{thm: BJM}
For $d\geq 1$ and $0\leq \ell\leq d$, 
\[
\Ehr_{[-1,1]^d_\ell}(x) = \frac{B_{d+1,\ell+1}}{(1-x)^{d+1}}.
\]
\end{theorem}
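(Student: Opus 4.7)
The plan is to reduce the Ehrhart computation for $[-1,1]^d_\ell$ to an $h$-polynomial calculation via the unimodular triangulation $T_d$ of Lemma~\ref{lem: geometric realization}.  First I would verify the direct lattice-point count $|t\cdot[-1,1]^d_\ell \cap \Z^d| = (2t)^\ell(2t+1)^{d-\ell}$ for $t \ge 1$: each removed ``upper'' facet $\{x_i = 1\}$ trims one coordinate direction from $2t+1$ to $2t$ integer values, while the remaining $d-\ell$ coordinate directions are unchanged.  Summing against $x^t$ and invoking equation~\eqref{eqn: colored} with $r = 2$ shows that the Ehrhart series has the form $A^{(2)}_{d,\ell}/(1-x)^{d+1}$, up to a correction at $t = 0$ coming from the Euler characteristic of $[-1,1]^d_\ell$.

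Next I would apply the half-open refinement of the unimodular-triangulation principle (Lemma~\ref{lem: not whole polytope}, adjusted for the Euler-characteristic contribution when necessary) to $T_d$.  Since $T_d$ is abstractly isomorphic to $\sd(\square_d)$, the $h$-polynomial of the relative complex $T_d \setminus (\cup_{H\in\HH} T_d|_{F_H})$, where $\HH$ is the collection of $\ell$ removed upper facets, can be written as a sum over maximal chains in the face lattice of $\square_d$.  Each such chain bijects with a signed permutation in $\mathfrak{S}_d \times \{\pm 1\}^d$ in the standard way: the minimal (vertex) face of the chain records the sign pattern, and the successive face additions record the underlying permutation.

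Finally, to identify the resulting polynomial with $B_{d+1,\ell+1}$, I would exhibit a bijection between maximal chains of $\square_d$ carrying extra data recording their interaction with the $\ell$ removed upper facets, and signed permutations of $[d+1]$ satisfying $\varepsilon_{d+1}\pi_{d+1} = d+1-\ell$.  The extra ``$(d+1)$st coordinate'' plays the role of this interaction data, and the statistic $\des_{\ell+1}$ is engineered so that the forced inclusion of position $d+1$ --- automatic from the constraint on $\varepsilon_{d+1}\pi_{d+1}$ --- accounts for the geometric contribution from the half-open structure.  The main obstacle is verifying that this bijection is statistic-preserving, particularly at the boundary cases where a chain's minimal face already lies on a removed facet; this is where the most careful bookkeeping is required.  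Once the descent-statistic matching is confirmed, the identity $h^\ast([-1,1]^d_\ell;x) = B_{d+1,\ell+1}$ follows, and the claimed formula for $\Ehr_{[-1,1]^d_\ell}(x)$ is immediate.
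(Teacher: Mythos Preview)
This theorem is not proved in the paper; it is imported wholesale from \cite[Theorem~5.1]{BJM16} and used as a black box. So there is no ``paper's own proof'' to compare against.

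As for your plan itself: step~1 is correct and in fact already does almost all the work. The formula $(2t)^\ell(2t+1)^{d-\ell}$ holds for $t=0$ as well (it gives $0$ when $\ell>0$ and $1$ when $\ell=0$, matching the Euler characteristic), so no correction is needed, and equation~\eqref{eqn: colored} with $r=2$ immediately yields $\Ehr_{[-1,1]^d_\ell}(x) = A^{(2)}_{d,\ell}/(1-x)^{d+1}$. What remains is the purely combinatorial identity $A^{(2)}_{d,\ell} = B_{d+1,\ell+1}$, which the paper itself records just after the theorem statement.

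Your steps~2--3, routing through the barycentric triangulation $T_d$ and a chain bijection, are a legitimate way to reprove that identity, but they are a detour: you are computing $h^\ast([-1,1]^d_\ell;x)$ a second time via Lemma~\ref{lem: not whole polytope} and then matching the answer to $B_{d+1,\ell+1}$. The original argument in \cite{BJM16} instead decomposes $[-1,1]^d_\ell$ directly into half-open unimodular simplices indexed by signed permutations of $[d+1]$ with fixed last value (in the spirit of Proposition~\ref{prop: eulerian comb int} here), so that the $\des_{\ell+1}$ statistic is read off from the number of missing facets of each piece. Your barycentric route would work, but the bookkeeping you flag as the ``main obstacle'' is genuinely more delicate than the standard half-open decomposition, since maximal chains in the face lattice of $\square_d$ are naturally indexed by $\mathfrak{S}_d \times \{\pm 1\}^d$ rather than by the constrained signed permutations of $[d+1]$ appearing in $B_{d+1,\ell+1}$.
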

In \cite{BJM16}, it is further noted that for $0\leq \ell\leq d$, 
\begin{equation}
\label{eqn: in fact a polynomial}
i([-1,1]^d_\ell;t) = \sum_{[\ell]\subseteq S\subseteq[d]}(2t)^{|S|} = (2t)^\ell(2t+1)^{d-\ell}.
\end{equation}
From this, it follows that $B_{d+1,\ell+1} = A_{d,\ell}^{(2)}$, for $0\leq \ell\leq d$.
The polynomials $B_{d+1,\ell+1}$ for $d+1\leq \ell\leq 2d$ can also be computed using the polynomials $A_{d,\ell}^{(2)}$.  
However, it requires a small geometric trick.
%---LEMMA: Ehrhart-reciprocity---
\begin{lemma}
\label{lem: ehrhart reciprocity}
For $d\geq1$ and $0\leq \ell < d$,
\[
h^\ast([-1,1]^d_{2d-\ell};x) = \I_{d+1}h^\ast([-1,1]^d_{\ell};x).
\]
In particular, $h^\ast([-1,1]^d_{2d-\ell};x) = x\I_dA_{d,\ell}^{(2)}$.
\end{lemma}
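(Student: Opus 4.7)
The plan is to apply Lemma~\ref{lem: half-open reciprocity} to the $d$-cube $P = [-1,1]^d$ with a carefully chosen viewpoint $q \in \R^d$, arranged so that the visible half-open cube $P \setminus B$ equals $[-1,1]^d_\ell$ and the complementary half-open cube $P \setminus D$ is unimodularly equivalent to $[-1,1]^d_{2d-\ell}$. Combined with the identification $h^\ast([-1,1]^d_\ell; x) = A_{d,\ell}^{(2)}$ coming from Theorem~\ref{thm: BJM} and the remark immediately following it, the reciprocity statement will yield the main identity, and the ``in particular'' claim will then be a short consequence of the fact that $A_{d,\ell}^{(2)}$ has degree exactly $d$.

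For $1 \leq \ell < d$, I would take $q = (0, \ldots, 0, c, \ldots, c) \in \R^d$ with the first $d - \ell$ coordinates equal to $0$ and the last $\ell$ coordinates equal to some fixed $c > 1$. Using the fact that $q$ is beyond a facet $F$ precisely when $q$ lies strictly on the opposite side of the supporting hyperplane of $F$ from $P$, a coordinatewise check shows that $q$ is beyond exactly the $\ell$ facets $\{x_i = 1\}$ with $i \in \{d-\ell+1, \ldots, d\}$ and beneath all other facets of $P$. Hence $B$ is the union of these $\ell$ facets, so $P \setminus B = [-1,1]^d_\ell$, while $D$ consists of the remaining $2d - \ell$ facets $\{x_i = 1\}_{i=1}^{d-\ell} \cup \{x_i = -1\}_{i=1}^d$. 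This latter half-open cube removes the same combinatorial data as $[-1,1]^d_{2d-\ell}$, namely $d - \ell$ opposing pairs together with $\ell$ further single facets, so applying the coordinate reflections $x_i \mapsto -x_i$ for $i \in \{d-\ell+1, \ldots, d\}$ followed by an appropriate coordinate permutation exhibits a unimodular equivalence between $P \setminus D$ and $[-1,1]^d_{2d-\ell}$; both therefore have the same $h^\ast$-polynomial. Lemma~\ref{lem: half-open reciprocity} now gives
\[
h^\ast([-1,1]^d_{2d-\ell}; x) = h^\ast(P \setminus D; x) = \I_{d+1} h^\ast(P \setminus B; x) = \I_{d+1} A_{d,\ell}^{(2)}.
\]

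The boundary case $\ell = 0$ requires separate treatment, since then $B$ is empty and Lemma~\ref{lem: half-open reciprocity} does not apply directly; here $[-1,1]^d_{2d}$ is the relative interior of the cube and the identity reduces to classical Ehrhart--Macdonald reciprocity for $[-1,1]^d$. Finally, for the ``in particular'' claim, a short calculation with the defining series of $A_{d,\ell}^{(2)}$ (expanding $(2t)^\ell(2t+1)^{d-\ell}$ in powers of $t$ and using the standard identity $\sum_{t\geq 0} t^k x^t = P_k(x)/(1-x)^{k+1}$ with $\deg P_k = k$ and $[x^k]P_k = 1$) yields $[x^d] A_{d,\ell}^{(2)} = (-1)^d p(-1) = 2^\ell \neq 0$, where $p(t) = (2t)^\ell(2t+1)^{d-\ell}$; hence $\deg A_{d,\ell}^{(2)} = d$ and $\I_{d+1} A_{d,\ell}^{(2)} = x \cdot \I_d A_{d,\ell}^{(2)}$, giving the stated formula. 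The main technical point is the visibility verification for $q$, but once the hyperplane characterization of ``beyond'' and ``beneath'' is invoked this reduces to a routine coordinatewise check against all $2d$ facet-defining hyperplanes of $P$.
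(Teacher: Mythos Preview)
Your proof is correct and follows essentially the same approach as the paper: both apply Lemma~\ref{lem: half-open reciprocity} with a viewpoint $q$ chosen so that $P\setminus B = [-1,1]^d_\ell$ and $P\setminus D$ is unimodularly equivalent to $[-1,1]^d_{2d-\ell}$, handle $\ell=0$ separately via classical Ehrhart--Macdonald reciprocity, and invoke $h^\ast([-1,1]^d_\ell;x)=A_{d,\ell}^{(2)}$. The only minor difference is in establishing $\deg A_{d,\ell}^{(2)}=d$: the paper appeals to the combinatorial interpretation $A_{d,\ell}^{(2)}=B_{d+1,\ell+1}$ together with Lemma~\ref{lem: colored eulerian}, whereas your direct computation $[x^d]A_{d,\ell}^{(2)}=(-1)^d p(-1)=2^\ell$ is a cleaner and more self-contained route to the same conclusion.
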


\begin{proof}
Notice first that for $0\leq \ell\leq d$, the half-open polytope $[-1,1]^d_{\ell}$ corresponds to $[-1,1]^d\setminus \HH$ where we have removed all facets visible from a point $q\in\R^n$ for a fixed choice of $q$.  
Let $B_\ell$ denote the set of all such points visible from $q$ on $\partial[-1,1]^d$, and let $D_\ell :=\overline{\partial[-1,1]^d\setminus B_\ell}$.
Notice also, if $[-1,1]^d_\ell = [-1,1]^d\setminus B_\ell$ then $[-1,1]^d_{2d-\ell}$ is unimodularly equivalent to $[-1,1]^d\setminus D_\ell$ (namely, up to rotation).  

Next, consider the case when $\ell = 0$.  
Then the desired statement follows directly from classic Ehrhart-MacDonald reciprocity \cite[Theorem 4.1]{BR07}.
Thus, we need only prove the statement when $0<\ell<d$. 
Assuming this is the case, we then know that $B_\ell$ and $D_\ell$ are both nonempty, and $i(P\setminus B;t)$ is a polynomial in $t$ (see equation~\eqref{eqn: in fact a polynomial}).
Thus, by Lemma~\ref{lem: half-open reciprocity}, we know that
\[
h^\ast([-1,1]^d_{2d-\ell};x) = h^\ast(P\setminus D_\ell ; x) = \I_{d+1}h^\ast(P\setminus B_\ell; x) = \I_{d+1}h^\ast([-1,1]^d_\ell;x).
\]
The fact that $h^\ast([-1,1]^d_{2d-\ell};x) = x\I_dA_{d,\ell}^{(2)}$ then follows from equations~\eqref{eqn: colored} and~\eqref{eqn: in fact a polynomial}, and the fact that $A_{d,\ell}^{(2)}$ has $d$ simple, real zeros.  
The fact that the zeros of $A_{d,\ell}^{(2)}$ are simple and real is noted in Lemma~\ref{lem: colored eulerian}.  
The fact that there are $d$ such zeros can be derived from the combinatorial interpretation of $A_{d,\ell}^{(2)}$ as $B_{d+1,\ell+1}$.  
\end{proof}

Given our interpretation of the polynomials $h^\ast([-,1,1]^d_\ell;x)$ for $0\leq \ell\leq 2d$ in terms of the polynomials $A_{d,\ell}^{(2)}$, we can prove the following theorem for shellable cubical complexes.  
%---THEOREM: Shellable Cubical Complexes---
\begin{theorem}
\label{thm: shellable cubical complexes}
Let $\CC$ be a cubical complex with a stable shelling.  Then $h(\sd(\CC);x)$ is real-rooted. 
%Let $\CC$ be a the boundary complex of a $(d+1)$-dimensional cubical polytope and let $\sd(\CC)$ denote its barycentric subdivision.
%Then $h(\sd(\CC);x)$ is real-rooted.
\end{theorem}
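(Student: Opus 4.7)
The plan is to apply Theorem~\ref{thm: subdivisions of shellable complexes} to the stable shelling $(F_1,\ldots,F_s)$ of $\CC$ under the barycentric subdivision $\varphi: \sd(\CC)\to\CC$. The goal reduces to producing a permutation $\sigma\in\mathfrak{S}_s$ for which $(h(\sd(\RR_{\sigma(i)});x))_{i=1}^s$ is an interlacing sequence, where $\RR_i$ is the relative complex associated to $F_i$ by the shelling and $\varphi$.

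First I would identify each $\sd(\RR_i)$ with a half-open triangulated cube. Assume $\CC$ is pure of dimension $d$. Because the shelling is stable, Lemma~\ref{lem: opposing pairs} says each $\RR_i$ has a geometric realization as $[-1,1]^d_{\ell_i}$ for some $0\le\ell_i\le 2d$ (with $\ell_1=0$, since $\RR_1=\CC(F_1)$). By Lemma~\ref{lem: geometric realization}, $\sd(\square_d)$ is abstractly isomorphic to the unimodular triangulation $T_d$ of $[-1,1]^d$, and its restriction to any face of the cube is again a barycentric subdivision of that face. Hence $\sd(\RR_i)$ is isomorphic to the relative simplicial complex obtained from $T_d$ by removing the restrictions to the facet-defining hyperplanes corresponding to $[-1,1]^d_{\ell_i}$.

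Next I would compute $h(\sd(\RR_i);x)$ as an Ehrhart $h^\ast$-polynomial of the associated half-open cube. For $i=1$ (where $\ell_1=0$), Lemma~\ref{lem: whole polytope} gives $h(\sd(\RR_1);x)=h^\ast([-1,1]^d;x)$. For $i\ge 2$ the Euler characteristic of $[-1,1]^d_{\ell_i}$ is zero, so Lemma~\ref{lem: not whole polytope} yields $h(\sd(\RR_i);x)=h^\ast([-1,1]^d_{\ell_i};x)$. Combining Theorem~\ref{thm: BJM} with the equality $B_{d+1,\ell+1}=A_{d,\ell}^{(2)}$ for $0\le\ell\le d$ and with Lemma~\ref{lem: ehrhart reciprocity} for $d+1\le\ell\le 2d$ shows that every $h(\sd(\RR_i);x)$ lies in the set
\[
\mathcal{A}:=\{A_{d,\ell}^{(2)}:0\le\ell\le d\}\cup\{x\I_d A_{d,\ell}^{(2)}:0\le\ell<d\}.
\]

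The main obstacle is the last step: arranging the elements of $\mathcal{A}$, with multiplicity as they appear among the $\RR_i$, into a single interlacing sequence. Lemma~\ref{lem: colored eulerian} hands us the interlacing chain $A_{d,d}^{(2)}\prec A_{d,d-1}^{(2)}\prec\cdots\prec A_{d,0}^{(2)}$ of degree-$d$ polynomials with simple, negative real zeros. Each $A_{d,\ell}^{(2)}$ has positive leading coefficient, and its reciprocal $\I_d A_{d,\ell}^{(2)}$ has zeros which are the (real, negative) reciprocals of those of $A_{d,\ell}^{(2)}$; I would verify that the concatenated sequence
\[
x\I_d A_{d,0}^{(2)},\,x\I_d A_{d,1}^{(2)},\,\ldots,\,x\I_d A_{d,d-1}^{(2)},\,A_{d,d}^{(2)},\,A_{d,d-1}^{(2)},\,\ldots,\,A_{d,0}^{(2)}
\]
is interlacing by repeated appeal to Lemma~\ref{lem: basic facts}(\ref{eqn: condition1}) and (\ref{eqn: times x}) together with the Hermite--Biehler Theorem~\ref{thm: hermite-biehler}, checking that multiplication by $x$ and passage to $\I_d$ splices the two interlacing chains at their common extremal polynomial $A_{d,0}^{(2)}$ (which is symmetric, and hence equals $\I_d A_{d,0}^{(2)}$). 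Choosing $\sigma$ to list the $h(\sd(\RR_i);x)$ in the same order (with multiplicity), a subsequence of an interlacing sequence is interlacing, and Theorem~\ref{thm: subdivisions of shellable complexes} then delivers the real-rootedness of $h(\sd(\CC);x)$.
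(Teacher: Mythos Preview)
Your overall strategy matches the paper's exactly: identify each $\RR_i$ (under $\sd$) with a half-open triangulated cube $[-1,1]^d_\ell$, read off $h(\sd(\RR_i);x)$ as an $h^\ast$-polynomial via Lemmas~\ref{lem: whole polytope}--\ref{lem: not whole polytope}, Theorem~\ref{thm: BJM}, and Lemma~\ref{lem: ehrhart reciprocity}, and then assemble an interlacing sequence and invoke Theorem~\ref{thm: subdivisions of shellable complexes}. However, two steps in your execution are genuinely wrong.

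First, you have the interlacing direction from Lemma~\ref{lem: colored eulerian} reversed. That lemma says $\left(A_{d,\ell}^{(2)}\right)_{\ell=0}^d$ is interlacing, i.e.\ $A_{d,\ell}^{(2)}\prec A_{d,k}^{(2)}$ for $\ell<k$, not $A_{d,d}^{(2)}\prec\cdots\prec A_{d,0}^{(2)}$. Consequently your proposed sequence is ordered incorrectly, and your ``splicing at $A_{d,0}^{(2)}$'' cannot work: although $A_{d,0}^{(2)}$ is symmetric (so $\I_d A_{d,0}^{(2)}=A_{d,0}^{(2)}$), the polynomial $x\I_d A_{d,0}^{(2)}=xA_{d,0}^{(2)}$ is not equal to $A_{d,0}^{(2)}$, so the two chains share no common member. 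The paper instead proves that
\[
\bigl(A_{d,0}^{(2)},\,A_{d,1}^{(2)},\,\ldots,\,A_{d,d}^{(2)},\,x\I_d A_{d,d}^{(2)},\,x\I_d A_{d,d-1}^{(2)},\,\ldots,\,x\I_d A_{d,0}^{(2)}\bigr)
\]
is interlacing, splicing at the middle via $A_{d,d}^{(2)}\prec x\I_d A_{d,d}^{(2)}$; this in turn comes from $\I_d A_{d,d}^{(2)}\prec A_{d,d}^{(2)}$ (a result from \cite{BS19}) together with Lemma~\ref{lem: basic facts}\eqref{eqn: times x}, and the reversal on the second half uses that $p\prec q$ implies $x\I_d q\prec x\I_d p$ for degree-$d$ polynomials with negative real zeros.

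Second, your blanket claim that the Euler characteristic of $[-1,1]^d_{\ell_i}$ is zero for all $i\ge 2$ fails when $\ell_i=2d$ (the fully open cube), which can occur as the last relative complex in a shelling. The paper handles $\ell=2d$ separately, computing $h(\sd([-1,1]^d)_{2d};x)=xA_{d,0}^{(2)}=x\I_d A_{d,0}^{(2)}$ directly from $f(\sd([-1,1]^d)_{2d};x)=xf(\sd(\partial\square_d);x)$ and Lemma~\ref{lem: h-polynomial of boundary}, rather than via Lemma~\ref{lem: not whole polytope}.
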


\begin{proof}
Let $(F_1,\ldots,F_s)$ be a stable shelling of the $d$-dimensional cubical complex $\CC$.  
Then for all $i\in[s]$, the relative complex
$
\RR_i
$
associated to $F_i$ by $(F_1,\ldots,F_s)$ is stable.  
Since $\CC$ is a $d$-dimensional cubical complex, each $\RR_i$ is a relative subcomplex of a $d$-dimensional cube.  
By Lemma~\ref{lem: opposing pairs}, it follows that each $\RR_i$ is isomorphic to $\sd([-1,1]^d)_\ell$ for some $0\leq \ell\leq 2d$, where
\[
\sd([-1,1]^d)_\ell := \sd([-1,1]^d)\setminus\{x_d = 1,\ldots, x_{d+1-\ell} = 1\}
\]
for $0\leq \ell\leq d$, and
\[
\sd([-1,1]^d)_\ell := \sd([-1,1]^d)\setminus\{x_d = 1,\ldots, x_{1} = 1\}\cup\{x_d = -1,\ldots,x_{2d+1-\ell} = -1\}
\]
for $d+1\leq \ell \leq 2d$.  
So by Theorem~\ref{thm: subdivisions of shellable complexes}, it suffices to show that the sequence of $h$-polynomials $(h(\sd([-1,1]^d)_\ell;x))_{\ell = 0}^{2d}$ forms an interlacing sequence. 

By Lemma~\ref{lem: geometric realization}, each $\sd([-1,1]^d)_\ell$ is a relative subcomplex of the unimodular triangulation $T_d$ of the $d$-cube $[-1,1]^d$.   
In the case that $\ell = 0$, this complex is a unimodular triangulation of the entire $d$-cube $[-1,1]^d$.  
So by Lemma~\ref{lem: whole polytope}, Theorem~\ref{thm: BJM}, and equation~\eqref{eqn: in fact a polynomial}, we find that
\[
h(\sd([-1,1]^d)_0;x) = h^\ast([-1,1]^d;x) = A_{d,0}^{(2)}.  
\]
When $\ell = 2d$, $\sd([-1,1]^d)_{2d}$ is the relative complex produced by taking the barycentric subdivision of the $d$-cube and then removing its subdivided boundary.  
Hence, the $f$-polynomial satisfies
$
f(\sd([-1,1]^d)_{2d};x) = xf(\sd(\partial\square_d);x).
$
So by Lemma~\ref{lem: h-polynomial of boundary}, we know that $h(\sd([-1,1]^d)_{2d};x) = xA_{d,0}^{(2)}$.
Moreover, since $A_{d,0}^{(2)}$ is known to be symmetric with respect to degree $d$, it follows that 
\[
h(\sd([-1,1]^d)_{2d};x) = x\I_dA_{d,0}^{(2)}.
\]

In the case that $0<\ell< 2d$, the complex $\sd([-1,1]^d)_\ell$ has Euler characteristic $0$. 
So it follows by Lemma~\ref{lem: not whole polytope}, Theorem~\ref{thm: BJM}, and equation~\eqref{eqn: in fact a polynomial} that
\[
h(\sd([-1,1]^d)_\ell;x) = h^\ast([-1,1]^d_\ell;x) = A_{d,\ell}^{(2)}
\]
when $0<\ell\leq d$.
Hence, by Lemma~\ref{lem: ehrhart reciprocity}, for all $0\leq \ell\leq 2d$ it follows that $h(\sd([-1,1]^d)_\ell;x)$ is of the form $A_{d,\ell^\prime}^{(2)}$ or $x\I_dA_{d,\ell^\prime}^{(2)}$ for some $0\leq \ell^\prime \leq d$.  
Thus, it suffices to show that the sequence
\[
\left(A_{d,0}^{(2)}, A_{d,1}^{(2)}, \ldots, A_{d,d-1}^{(2)}, A_{d,d}^{(2)}, x\I_dA_{d,d}^{(2)}, \ldots, x\I_dA_{d,1}^{(2)}, x\I_dA_{d,0}^{(2)}\right)
\]
is interlacing.  
By \cite[Lemma 2.3]{B06}, we need only check that each of the following relations are satisfied:
\begin{enumerate}
	\item $A_{d,0}^{(2)}\prec x\I_dA_{d,0}^{(2)}$,
	\item $A_{d,\ell}^{(2)}\prec A_{d,k}^{(2)}$ for all $0\leq \ell < k\leq d$,
	\item $A_{d,d}^{(2)} \prec x\I_dA_{d,d}^{(2)}$, and
	\item $x\I_dA_{d,k}^{(2)}\prec x\I_dA_{d,\ell}^{(2)}$ for all $0\leq \ell < k\leq d$.
\end{enumerate}
Case $(1)$ is immediate from the fact that $A_{d,0}^{(2)} = \I_dA_{d,0}^{(2)}$ and Lemma~\ref{lem: basic facts}~\eqref{eqn: times x}.
Case $(2)$ follows from Lemma~\ref{lem: colored eulerian}. 
Case $(3)$ follows from \cite[Theorem 3.1]{BS19}, which shows that $\I_dA_{d,d}^{(2)}\prec A_{d,d}^{(2)}$, and Lemma~\ref{lem: basic facts}~\eqref{eqn: times x}, and case $(4)$ follows directly from case $(2)$.
Thus, since this sequence is interlacing, it follows that $h(\sd(\CC);x)$ is real-rooted, which completes the proof.
\end{proof}

We now apply these results to give a positive answer to Problem~\ref{prob: subdivisions of polytopes} for the well-known constructions of cubical polytopes; namely, the cuboids, capped cubical polytopes, and the neighborly cubical polytopes. 
%---SUBSUBSECTION: Barycentric Subdivisions of Cubiods
\subsubsection{Barycentric subdivisions of cuboids}
\label{subsubsec: cuboids}
Cuboids are a family of cubical polytopes described by Gr\"unbaum in \cite{G67}.  
For each dimension $d\geq1$, there are $d+1$ cuboids, denoted $Q_\ell^d$ for $0\leq \ell \leq d$.  
The first cuboid in dimension $d$, denoted $Q_0^d$, is the $d$-cube and the rest are defined recursively as follows:
To construct $Q_\ell^d$ for $\ell>0$, glue two copies of $Q_{\ell-1}^d$ at a common $Q^{d-1}_{\ell-1}$.  
The boundary of the resulting complex is the boundary complex of the cuboid $Q_\ell^d$.  
Equivalently, to construct the $\ell^{th}$ $d$-dimensional cuboid for $0<\ell \leq d$, start by taking the geometric realization $[-1,1]^d$ of $Q_0^d$.  
Then consider the geometric subdivision of $[-1,1]^d$ given by intersecting $[-1,1]^d$ with the $\ell$ hyperplanes $x_1 = 0,x_2 = 0,\ldots, x_\ell =0$.  
The boundary of the resulting cubical complex is the boundary complex of the cuboid $Q_\ell^d$.  
Using this second construction of the cuboid $Q_\ell^d$, we can deduce that all cuboids admit a stable shelling, yielding the following corollary to Theorem~\ref{thm: shellable cubical complexes}:
%---Corollary: Cuboids----
\begin{corollary}
\label{cor: cuboids}
The barycentric subdivision of the boundary complex of a cuboid has a real-rooted $h$-polynomial. 
\end{corollary}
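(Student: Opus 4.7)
The plan is to apply Theorem~\ref{thm: shellable cubical complexes}, which will reduce the claim to exhibiting a stable shelling of the boundary complex $\CC(\partial Q_\ell^d)$. I will use the geometric realization in which $\CC(\partial Q_\ell^d)$ arises as the boundary of $[-1,1]^d$ refined by the hyperplanes $x_1 = 0, \ldots, x_\ell = 0$: each facet is then a $(d-1)$-cube lying in some facet $\{x_j = \epsilon\}$ of $[-1,1]^d$, indexed by a sign vector $\sigma$ recording its position in the resulting pile of small cubes on that facet.

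To construct the stable shelling, I will combine the strategies of Example~\ref{ex: d-cube} and Example~\ref{ex: piles of cubes}. For the outer order, I will process the $2d$ facets of $[-1,1]^d$ in the two-phase sequence from Example~\ref{ex: d-cube}: first all ``negative'' facets $\{x_j = -1\}$ for $j = 1, \ldots, d$, then all ``positive'' facets $\{x_j = +1\}$ for $j = 1, \ldots, d$. For the inner order, each facet $\{x_j = \epsilon\}$ of $[-1,1]^d$ is refined into a pile of small $(d-1)$-cubes by the hyperplanes $\{x_i = 0\}$ for $i \in [\ell] \setminus \{j\}$, and I will shell this pile in lex order of $\sigma$, exactly as in Example~\ref{ex: piles of cubes}. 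Concatenating the inner orders according to the outer-order sequence produces a linear order on all facets of $\CC(\partial Q_\ell^d)$; note that when $\ell = 0$ the inner refinement is trivial and this reduces to the shelling of Example~\ref{ex: d-cube}.

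To verify stability, I will invoke Lemma~\ref{lem: stability implies shelling} to reduce the check to showing that each associated relative complex $\RR_i$ is stable. Two cases arise. For a facet $F_i$ on a negative face $\{x_j = -1\}$, I will argue that the codimension-$1$ faces of $\mathcal{D}_i := F_i \cap (\text{previously shelled facets})$ come either (a) from previously-shelled facets on earlier negative faces $\{x_k = -1\}$ with $k < j$, contributing ``$x_k = -1$'' facets of $F_i$, or (b) from lex-earlier sub-cubes within the same face, contributing ``$x_i = 0$'' facets in the pile direction. Since no ``$x_k = +1$'' facet ever appears in $\mathcal{D}_i$, it contains no opposing pair, and Lemma~\ref{lem: opposing pairs} yields stability of $\RR_i$. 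For a facet $F_i$ on a positive face $\{x_j = +1\}$, I will instead analyze the codimension-$1$ faces of $\RR_i$ directly: they consist of the ``positive-direction'' facets $\{x_k = +1\}$ for $k > j$ (contributed by as-yet-unshelled positive faces) together with some ``$x_i = 0$'' facets contributed by lex-later sub-cubes on the same face, and again no opposing pair appears, so Lemma~\ref{lem: opposing pairs} gives stability.

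The hard part will be the careful bookkeeping in the positive-phase case, where I must verify that the interaction between the outer phase ordering and the inner lex ordering never produces an opposing pair among the codimension-$1$ faces of $\RR_i$; small cases such as $Q_1^3$ and $Q_1^4$ can be worked out by hand to guide the general argument. Once each $\RR_i$ is established as stable, Theorem~\ref{thm: shellable cubical complexes} immediately yields that $h(\sd(\CC(\partial Q_\ell^d)); x)$ is real-rooted, proving the corollary.
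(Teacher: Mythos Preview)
Your proposal is correct and follows essentially the same approach as the paper's proof: both realize $\CC(\partial Q_\ell^d)$ as the boundary of a cube (you use $[-1,1]^d$, the paper uses $[0,2]^d$) refined by $\ell$ coordinate hyperplanes, impose the two-phase outer order of Example~\ref{ex: d-cube} on the big facets and the lexicographic inner order of Example~\ref{ex: piles of cubes} on each resulting pile of sub-cubes, and then verify stability facet by facet via Lemma~\ref{lem: opposing pairs} (checking that $\mathcal{D}_i$ has no opposing pair in the first phase and that $\RR_i$ has no opposing pair in the second), invoking Lemma~\ref{lem: stability implies shelling} to conclude the order is a shelling. One small point to tighten when you write it up: in the negative phase your justification ``no $x_k = +1$ facet ever appears in $\mathcal{D}_i$'' does not by itself rule out the opposing pair $\{x_k = -1,\, x_k = 0\}$ in a direction $k\in[\ell]$ with $\sigma_k = -$; you should phrase it as ``no \emph{upper} facet (i.e.\ the larger-$x_k$ facet of the sub-cube, whether that is $x_k=+1$ or $x_k=0$) appears in $\mathcal{D}_i$,'' which is exactly what your enumeration (a)--(b) already establishes.
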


\begin{proof}
Recall that the cuboid $Q_0^d$ is the $d$-dimensional cube, whose barycentric subdivision is well-known to have a real-rooted $h$-polynomial.  
So fix $0<\ell\leq d$. 
By Theorem~\ref{thm: shellable cubical complexes}, it suffices to show that the boundary complex of $Q_\ell^d$, denoted $\CC(\partial Q_\ell^d)$, has a stable shelling.  
By construction, the boundary complex of $Q_\ell^d$ is isomorphic to the subdivision of the boundary complex of $[0,2]^d$ induced by the hyperplanes $x_1 = 1,\ldots,x_\ell = 1$.  
For $i\in[d]$, let $F_i$ and $F_{d+i}$ denote, respectively, the facets of $[0,2]^d$ lying in the hyperplanes $x_i = 0$ and $x_i =2$.  
It follows that, to construct $\CC(\partial Q_\ell^d)$, each facet $F_i$ and $F_{d+i}$ is subdivided into a complex isomorphic to the pile of cubes $\mathcal{P}_{d-1}(c^{(i,\ell)})$, where 
\[
c^{(i,\ell)} = (1,1,\ldots,1) + \sum_{j\in[\ell]\setminus\{i\}}e_j,
\]
where $e_1,\ldots,e_{d-1}\in\R^{d-1}$ denote the standard basis vectors (see Example~\ref{ex: piles of cubes} for the definition of a pile of cubes). 
The facets of $Q_\ell^d$ are then the facets of the piles of cubes $\mathcal{P}_{d-1}(c^{(i,\ell)})$ for all $i\in[d]$.  
Here we have two copies of each pile of cubes, one for $F_i$ and one for $F_{d+i}$. 

Fix the stable shelling order $(F_1,\ldots,F_d,\ldots, F_{2d})$ of $\CC(\partial\square_d)\simeq\CC(\partial[0,2]^d)$ from Example~\ref{ex: d-cube}, and suppose that $F_i$ and $F_{d+i}$ have been subdivided into the pile of cubes $\mathcal{P}_{d-1}(c^{(i,\ell)})$ consisting of $M_i$ cubes.  
Also set $M_{d+i}:=M_i$.  
Just as in Example~\ref{ex: piles of cubes}, we index the cubes in the facet $F_i$ (for $i\in[2d]$) by their lexicographically smallest vertex.  
As proven in Example~\ref{ex: piles of cubes}, this is a stable shelling order for $\mathcal{P}_{d-1}(c^{(i,\ell)})$.  
Suppose this shelling order for the cubes in $F_i$ is $(C_{1_i},\ldots,C_{M_i})$.  
We claim that the linear ordering
\begin{equation}
\label{eqn: order}
(C_{1_1},\ldots,C_{M_1},C_{1_2},\ldots,C_{M_2},\ldots,C_{1_{2d}},\ldots,C_{M_{2d}})
\end{equation}
is a stable shelling of the boundary of $Q_\ell^d$. 
To see this, fix $C_{j_k}$ for $k\in[d]$ and suppose that $C_{j_k}$ is indexed by the integer point $(a_1,\ldots,a_d)\in F_k$, the facet of $[0,2]^d$.  
Note here that $F_k$ is assumed to be a facet of $[0,2]^d$ whose corresponding facet-defining hyperplane is $x_k = 0$.  
Since $C_{j_k}$ is a cube in the subdivided facet $F_k$, it follows that $a_k = 0$ and that $C_{j_k}$ has facet-defining hyperplanes
\[
x_1 = a_1,\ldots,x_{k-1} = a_{k-1},x_{k+1} = a_{k+1},\ldots, x_d = a_d,
\]
and 
$
x_i = a_i +b_i 
$
for $i\in[d]\setminus\{k\}$ and some $b_i\in\{1,2\}$.
In this context, $x_i = a_i$ and $x_i = a_i+b_i$ contain the opposing pair of facets $G_i,G_{d+i}$ of $C_{j_k}$ for $i\in[d]\setminus\{k\}$.
 
We then have two cases: either $b_i =1$ or $b_i =2$. 
In the former case, the facet $G_{d+i}$ defined by $x_i = a_i+b_i$ would only not be a codimension $1$ face of the relative complex associated to $C_{j_k}$ if the cube indexed by $(a_1,\ldots,a_k+1,\ldots,a_d)$ in $F_k$ had preceded $C_{j_k}$ in the order~\eqref{eqn: order}.
However, this cannot happen since $(a_1,\ldots,a_k+1,\ldots,a_d) >_{\lex} (a_1,\ldots,a_k,\ldots,a_d)$ in the lexicographic order.  
In the latter case, the facet-defining hyperplane $x_i = a_i+b_i$ is the hyperplane $x_i = 2$, and hence the facet $G_{d+i}$ defined by this hyperplane would not be in the associated relative complex if and only if a cube lying in a facet $F_{d+i}$ of $[0,2]^d$ for some $i\in[d]$ had preceded $C_{j_k}$ in the order~\eqref{eqn: order}.  
Since this is impossible, we conclude that the facet of $C_{j_k}$ defined by $x_i = a_i+b_i$ is in the relative complex associated to $C_{j_k}$.  
Since this argument holds for all $i\in[d]\setminus\{k\}$, it follows that the set of facets of the complex determined by 
\[
C_{j_k}\cap(C_{1_1}\cup\cdots\cup C_{M_1}\cup\cdots\cup C_{1_k}\cup\cdots\cup C_{{j-1}_{k}})
\]
does not contain an opposing pair.
%Since this argument holds for all $i\in[d]\setminus\{k\}$, it follows that the relative complex $C_{j_k}$ does not contain an opposing pair of facets.  
Hence by Lemma~\ref{lem: opposing pairs}, the relative complex associated to $C_{j_k}$ by the order~\eqref{eqn: order} is stable.  

Now suppose that $k = d+k^\prime$ for some $k^\prime\in[d]$. 
Hence, $C_{j_k}$ is a facet of $\CC(\partial Q_\ell^d)$ lying in the facet $F_{d+k^\prime}$ of $[0,2]^d$.  
Assume once more that $C_{j_k}$ is indexed by the integer point $(a_1,\ldots,a_d)$ in the facet $F_k$ of $[0,2]^d$.  
We claim that none of the facets of $C_{j_k}$ defined by the hyperplanes 
\[
x_1 = a_1,\ldots,x_{k^\prime-1} = a_{k^\prime-1},x_{k^\prime+1} = a_{k^\prime+1},\ldots, x_d = a_d,
\]
are facets of the relative complex $\RR_{j_k}$ associated to $C_{j_k}$ by the order~\eqref{eqn: order}.
This follows via induction.  
Suppose first that $j_k = 1_k$. 
In this case, $C_{j_k}$ is the first cube in the facet $F_k$ that appears in the order~\eqref{eqn: order}.  
Hence, the integer point in $F_k$ indexing $C_{j_k}$ must be $e_{k^\prime}$, the standard basis vector in $\R^d$.  
Thus, $a_i = 0$ for all $i\in[d]\setminus\{k^\prime\}$ (and $a_{k^\prime} = 1$).  
Since all cubes in the facets $F_1,\ldots,F_d$ preceded $C_{j_k}$ in the order~\eqref{eqn: order}, then none of the facets defined by the hyperplanes $x_i =a_i (=0)$ for $i\in[d]\setminus\{k^\prime\}$ can be a facet of the relative complex $\RR_{j_k}$ associated to $C_{j_k}$.  
Hence, by Lemma~\ref{lem: opposing pairs}, the relative complex $\RR_{j_k}$ is stable.  

Similarly, the next cube in the ordering $C_{j_k+1}$ will be indexed by $(a_1,\ldots,a_{k^\ast}+1,\ldots,a_d)$, where $k^\ast$ is the right-most coordinate in $(a_1,\ldots,a_d)$ for which adding $1$ produces a new point in $[0,2]^d\cap\Z^d$ that indexes a cube.  
It then follows that $a_{k^\ast} = 0$ and $b_{k^\ast} = 1$, by construction of $Q_\ell^d$.  
Hence, $C_{j_k+1}$ has the set of facet-defining hyperplanes
\begin{equation}
\label{eqn: hyperplanes}
x_1 = a_1,\ldots,x_{k^\ast} = a_{k^\ast}+1,\ldots, x_{k^\prime-1} = a_{k^\prime-1},x_{k^\prime+1} = a_{k^\prime+1},\ldots, x_d = a_d,
\end{equation}
and their opposites $x_i = a_i +b_i $ for $i\in[d]\setminus\{k^\prime\}$ and some $b_i\in\{1,2\}$.
Since each of the hyperplanes listed in~\eqref{eqn: hyperplanes} is also a facet-defining hyperplane of $C_{j_k}$, it follows that none of them define a facet in the relative complex $\RR_{j_k+1}$ associated to $C_{j_k+1}$.  
Hence the set of facets of $\RR_{j_k+1}$ does not contain an opposing pair. 
Therefore, it is a stable complex by Lemma~\ref{lem: opposing pairs}.    
By iterating this argument, we see that the relative complex of each facet following $C_{j_k}$ in the order~\eqref{eqn: order} is stable.  
The fact that the order~\eqref{eqn: order} is a shelling order now follows from Lemma~\ref{lem: stability implies shelling}.  
Applying Theorem~\ref{thm: shellable cubical complexes} completes the proof.
\end{proof}

Corollary~\ref{cor: cuboids} gives a positive answer to Problem~\ref{prob: subdivisions of polytopes} in the case of cuboids, one of the three well-known constructions of cubical polytopes.  
In the next subsection, we deduce a positive answer to Problem~\ref{prob: subdivisions of polytopes} for the remaining two.  

%---SUBSUBSECTION: Barycentric Subdivisions of Capped Cubical Polytopes
\subsubsection{Barycentric subdivisions of capped cubical polytopes}
\label{subsubsec: capped cubical polytopes}
Capped cubical polytopes, or {\em stacked cubical polytopes}, are the cubical analogue to {\em stacked simplicial polytopes} \cite{K04}.  
A polytope $P$ is called \emph{capped} over a given cubical polytope $Q$ if there is a combinatorial cube $C$ such that $P = Q\cup C$ and $F := Q\cap C$ is a facet of $Q$. 
In this case, we then think of $P$ as produced by \emph{capping} $Q$ \emph{over} $F$, and we write
\[
P = \capped(Q,F).
\]
We say that a polytope is \emph{$\ell$-fold capped cubical} for some $\ell\in\Z_{\geq0}$ if it can be obtained from a combinatorial cube by $\ell$ capping operations.
In the following, let $\CC$ denote a $(d-1)$-dimensional cubical complex that is the boundary complex of an $\ell$-fold capped polytope.  
Let $\square_d$ denote the (abstract) $d$-cube. % and let $\partial\widetilde{\square}_d$ denote the boundary complex of $\square_d$ with the interior of one facet removed.
Our goal in this subsection is to show that $h(\sd(\CC);x)$ is real-rooted whenever $\CC$ is the boundary complex of an $\ell$-capped cubical polytope $P$ for some $\ell\in\Z_{\geq0}$.
To do so, we again use Theorem~\ref{thm: shellable cubical complexes} and the machinery of stable shellings developed in Subsection~\ref{subsec: stable shellings}.
%To do so, we require some lemmata.

%---THEOREM: Capped Real-rootedness----
\begin{corollary}
\label{cor: capped real-rootedness}
Let $\CC$ denote the boundary complex of a $d$-dimensional $\ell$-capped cubical polytope.
Then $h(\sd(\CC);x)$ is real-rooted.
%Moreover,
%\[
%A_{d-1}^{(2)}\prec h(\sd(\CC);x).
%\]
\end{corollary}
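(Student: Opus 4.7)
The plan is to apply Theorem~\ref{thm: shellable cubical complexes} by producing a stable shelling of $\CC = \CC(\partial P)$. I would induct on the number $\ell$ of cappings. When $\ell = 0$, $P$ is a combinatorial $d$-cube, and Example~\ref{ex: d-cube} furnishes a stable shelling of $\CC(\partial \square_d)$, so Theorem~\ref{thm: shellable cubical complexes} gives the claim at once.

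For the inductive step, suppose the result holds for $\ell$-capped polytopes and let $P = \capped(Q, F)$ with $Q$ being $\ell$-capped and $C$ the capping cube, so $P = Q \cup C$ and $F = Q \cap C$. The facets of $\partial P$ are the facets of $\partial Q$ distinct from $F$, together with the $2d-1$ facets $G_1, \ldots, G_{2d-1}$ of $C$ other than $F$. The inductive hypothesis provides a stable shelling $(H_1, \ldots, H_m)$ of $\CC(\partial Q)$; relabel so that $F = H_k$. My proposal is to consider the candidate shelling
\[
(H_1, \ldots, H_{k-1},\; G_{\sigma(1)}, \ldots, G_{\sigma(2d-1)},\; H_{k+1}, \ldots, H_m)
\]
of $\CC(\partial P)$, for a permutation $\sigma \in \mathfrak{S}_{2d-1}$ to be chosen. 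The relative complexes at positions $j < k$ are unchanged from the original shelling and hence are stable. For $j > k$, the key observation is that $H_j \cap (G_1 \cup \cdots \cup G_{2d-1}) = H_j \cap F$, because each $G_i$ meets $\partial Q$ only along $\partial F$ while $H_j \cap F \subseteq \partial F$ is already the full intersection; consequently the relative complex of $H_j$ matches the one from the stable shelling of $\partial Q$ and is stable.

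The technical heart of the argument lies in choosing $\sigma$ so that every $\RR_{G_{\sigma(i)}}$ is stable. Stability of $\RR_{H_k}$ in the original shelling, reinterpreted via Lemma~\ref{lem: opposing pairs}, tells us that the shelled subcomplex $F \cap (H_1 \cup \cdots \cup H_{k-1})$ of $\partial F$ is a union of facets of $F$ that either (a) avoids opposing pairs itself or (b) has a complement among the facets of $F$ avoiding opposing pairs. The natural bijection between facets of $F$ and facets of $C$ adjacent to $F$ distinguishes a subset of the $G_i$'s whose $F$-ridge is already fully shelled. I would order $\sigma$ by first shelling these ``prepared'' $G_i$'s in a manner mimicking the stable shelling of $\partial C$ given in Example~\ref{ex: d-cube}, then shelling the remaining $G_i$'s, placing the facet of $C$ opposite $F$ where its predecessors first furnish nonempty contact. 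Stability at each step is then verified directly via Lemma~\ref{lem: opposing pairs}, and Lemma~\ref{lem: stability implies shelling} guarantees that the chosen order is in fact a shelling.

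The main obstacle is combinatorial: the ordering of the $G_i$'s must be chosen compatibly with the inherited partial shelling on $\partial F$, so that at each step the subcomplex removed from $G_{\sigma(i)}$ consists precisely of a union of complete facets (ridges of $C$) avoiding an opposing pair, rather than only lower-dimensional debris from the $H_l \cap G_i$ contributions. Once this verification is in hand, applying Theorem~\ref{thm: shellable cubical complexes} to the resulting stable shelling of $\CC(\partial P)$ yields real-rootedness of $h(\sd(\CC);x)$.
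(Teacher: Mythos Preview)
Your proposal follows essentially the same inductive strategy as the paper: replace the capped facet $F=H_k$ in a stable shelling of $\partial Q$ by the $2d-1$ non-glued facets of the capping cube, then check that every relative complex in the resulting order is stable. Your treatment of the facets $H_j$ for $j<k$ and $j>k$ matches the paper's exactly, including the key observation that $H_j\cap(G_1\cup\cdots\cup G_{2d-1})=H_j\cap F$.

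The one substantive difference is in how the ordering of the $G_i$'s is handled. The paper does not search for an adaptive permutation $\sigma$: it simply labels the facets of $C$ so that the glued facet is $G_1$ in the stable shelling $(G_1,\ldots,G_{2d})$ of $\partial\square_d$ from Example~\ref{ex: d-cube}, and inserts $G_2,\ldots,G_{2d}$ in that fixed order. The case analysis then splits only on whether $G_i\cap(F_1\cup\cdots\cup F_{k-1})$ is empty or not; in either case the removed facets of $G_i$ are identified with an explicit subset of the coordinate hyperplanes $x_j=0$ or $x_j=1$, and Lemma~\ref{lem: opposing pairs} applies directly. This sidesteps the ``lower-dimensional debris'' issue you flag, because the identification of $G_1$ with $F$ absorbs the $H_l\cap G_i$ contributions into the single facet $G_i\cap G_1$. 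Your adaptive ordering would also work, but the paper's fixed choice makes the verification shorter and avoids having to reason about which $G_i$'s are ``prepared.''
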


\begin{proof}
By Theorem~\ref{thm: shellable cubical complexes}, it suffices to show that $\CC$ admits a stable shelling.  
To prove this, we proceed by induction on $\ell\geq 0$.
When $\ell = 0$, $\CC$ is the boundary complex of a $d$-dimensional cube.  
Hence, as we saw in Example~\ref{ex: d-cube}, $\CC$ admits a stable shelling.  

Suppose now that $\CC$ is the boundary complex of a $d$-dimensional $\ell$-capped cubical polytope $P = \capped(Q,F)$, for $\ell > 0$.  
Then $Q$ is a $d$-dimensional $(\ell-1)$-capped cubical polytope, and hence, by our inductive hypothesis, the boundary complex $\DD$ of $Q$ admits a stable shelling 
\begin{equation}
\label{eqn: capped inductive}
(F_1,\ldots,F_M).
\end{equation}
Since $P = \capped(Q,F)$, then it follows that $\CC$ is produced from $\DD$ by subdividing the facet $F$ of $\DD$ into the Schlegel diagram \cite[Definition 5.5]{Z12} of the $d$-dimensional cube $\square_d$ based at a facet $G$ of $\square_d$ that is identified with $F$ in $\DD$.  
Suppose that the facets of $\square_d$ are $G_1,\ldots,G_{2d}$, and suppose that $(G_1,\ldots,G_d,G_{d+1},\ldots,G_{2d})$ is the stable shelling order of $\CC(\partial\square_d)$ given in Example~\ref{ex: d-cube}, where we assume $G = G_1$.  
We claim that the linear ordering 
\begin{equation}
\label{eqn: capped stable}
(F_1,\ldots,F_{k-1},G_2,\ldots,G_d,G_{d+1},\ldots,G_{2d},F_{k+1},\ldots,F_M)
\end{equation}
is a stable shelling of $\CC$.  
Since the relative complex of each $F_i$ for $i\neq k$ in the linear ordering~\eqref{eqn: capped stable} is the same is its relative complex in the ordering~\eqref{eqn: capped inductive}, it suffices to show that the relative complex associated to each $G_i$ for $i\in[2d]$ is stable.  

To see this, consider first a facet $G_i$ for $1< i\leq d$.  
Since $G = G_1$ is identified with the facet $F_k = F$ of $\DD$, then 
\begin{equation}
\label{eqn: subcomplex}
G_i\cap(F_1\cup\ldots\cup F_{k-1}\cup G_2 \cup \cdots \cup G_{i-1}) = G_i\cap(G_1\cup G_2 \cup \cdots \cup G_{i-1})
\end{equation}
if $G_i\cap(F_1\cup\ldots\cup F_{k-1}) \neq \emptyset$, or 
\begin{equation}
\label{eqn: subcomplex nonint}
G_i\cap(F_1\cup\ldots\cup F_{k-1}\cup G_2 \cup \cdots \cup G_{i-1}) = G_i\cap(G_2 \cup \cdots \cup G_{i-1})
\end{equation}
otherwise. 
In the former case, as in Example~\ref{ex: d-cube}, the subcomplex~\eqref{eqn: subcomplex} consists of the facets of $G_i$ defined by the hyperplanes $x_1 = 0, \ldots, x_{i-1} = 0$.  
Therefore, $G_i\cap(F_1\cup\ldots\cup F_{k-1}\cup G_2 \cup \cdots \cup G_{i-1})$ determines a subcomplex of $\CC(\partial\square_{d-1})$ whose set of facets does not contain an opposing pair.  
In the latter case, the subcomplex~\eqref{eqn: subcomplex nonint} consists of the facets of $G_i$ defined by the hyperplanes $x_2 = 0, \ldots, x_{i-1} = 0$, and it again determines a subcomplex of $\CC(\partial\square_{d-1})$ whose set of facets does not contain an opposing pair. 
Hence, by Lemma~\ref{lem: opposing pairs}, the relative complex $\RR_i$ associated to $G_i$ by the ordering~\eqref{eqn: capped stable} is stable in both cases.  

Now consider the facet $G_{d+i}$ for some $i\in[d]$ and the subcomplex
\[
H := G_i\cap(F_1\cup\ldots\cup F_{k-1}\cup G_2 \cup \cdots \cup G_d \cup \cdots \cup G_{d+i-1}).
\]
of the boundary complex $\CC(\partial G_{d+i})$.  
Just as in the case of $G_i$ with $1< i\leq d$, this subcomplex is equal to $G_i\cap (G_1 \cup G_2 \cup \cdots \cup G_d \cup \cdots \cup G_{d+i-1})$ if $G_i\cap(F_1\cup\ldots\cup F_{k-1}) \neq \emptyset$.  
Otherwise, it is equal to $G_i\cap (G_2 \cup \cdots \cup G_d \cup \cdots \cup G_{d+i-1})$. 
In either case, the relative complex $\RR_{d+i}$ associated to $G_{d+i}$ by the ordering~\eqref{eqn: capped stable} is
\[
\RR_{d+i} = \CC(G_{d+i})\setminus \CC(H).
\]
In the former case, its codimension $1$ faces are determined by the hyperplanes $x_{i+1} = 1,\ldots, x_d = 1$.  
Therefore, the set of codimension $1$ faces of $\RR_{d+i}$ does not contain an opposing pair.  
In the latter case, $G_i\cap(F_1\cup\ldots\cup F_{k-1}) = \emptyset$, which can happen in one of two ways:  either $i = 1$ or $i\neq 1$.  
In the case that $i = 1$, it follows that the set of codimension $1$ faces of $\RR_{d+i}$ is determined by the hyperplanes $x_{i+1} = 1,\ldots, x_d = 1$, and thus does not contain an opposing pair.  
In the case that $i \neq 1$, the set of codimension one $1$  is determined by the hyperplanes $x_{i+1} = 1,\ldots, x_d = 1$ and the hyperplane $x_1 = 0$.  
However, since $i+1 >1$, this set still does not contain an opposing pair.  
Thus, by Lemma~\ref{lem: opposing pairs}, the relative complexes $\RR_i$ and $\RR_{d+i}$ are stable for all $i\in[d]$.  
It follows that the ordering~\eqref{eqn: capped stable} is a stable shelling order of $\CC$, which completes the proof.
%Hence, in the former case, its codimension $1$ faces are determined by the hyperplanes $x_{i+1} = 1,\ldots, x_d = 1$ and, in the case that $i\neq 1$, the hyperplane $x_1 = 0$.  
%
%So, if $i =1$, then $\RR_{d+i}$ does not contain an opposing pair, and hence it is a stable relative complex.  
%If $i>1$, then the codimension $1$ faces of $\RR_{d+i}$ are determined by the hyperplanes $x_1 = 0, x_{i+1} = 1, x_{i+2} = 1, \ldots, x_d = 1$.  
%Since $i+1>1$, then the set of codimension $1$ faces of $\RR_{d+i}$ does not contain an opposing pairs.  
%Thus, by Lemma~\ref{lem: opposing pairs}, the relative complexes $\RR_i$ and $\RR_{d+i}$ are stable for all $i\in[d]$.  
%It follows that the ordering~\eqref{eqn: capped stable} is a stable shelling order of $\CC$, which completes the proof.
\end{proof}

The third class of well-known cubical polytopes with an explicit construction are the neighborly cubical polytopes.  
These polytopes were introduced in \cite{BBC97}, but no explicit constructions was given. 
However, in \cite{JZ00}, Joswig and Zeigler proved that neighborly cubical polytopes exist by giving such explicit constructions, which they denoted by $C_n^d$.  
In \cite[Comment 1]{JZ00}, they note that the constructions $C_n^d$ are also capped cubical polytopes.  
Hence, as a corollary to Corollary~\ref{cor: capped real-rootedness}, we also obtain a positive answer to Problem~\ref{prob: subdivisions of polytopes} for the family of neighborly cubical polytopes. 
%---COROLLARY: Neighborly Cubical Polytopes---
\begin{corollary}
\label{cor: neighborly}
The barycentric subdivision of the boundary complex of a neighborly cubical polytope has a real-rooted $h$-polynomial.
\end{corollary}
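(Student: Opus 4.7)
The plan is to reduce Corollary~\ref{cor: neighborly} directly to Corollary~\ref{cor: capped real-rootedness}, exploiting the fact that the only explicit constructions of neighborly cubical polytopes currently in the literature happen to be capped cubical polytopes. Concretely, I would invoke the Joswig--Ziegler polytopes $C_n^d$ from \cite{JZ00}, which provide explicit realizations of neighborly cubical polytopes in every admissible pair of parameters, and then appeal to the observation made in \cite[Comment 1]{JZ00} that each $C_n^d$ arises from a combinatorial $d$-cube by a sequence of capping operations.

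The argument is then essentially a one-liner: let $P$ be a neighborly cubical polytope, realize it as some $C_n^d$, and observe that $C_n^d$ is $\ell$-fold capped cubical for some $\ell \in \Z_{\geq 0}$. The boundary complex $\CC = \CC(\partial C_n^d)$ is therefore the boundary complex of an $\ell$-capped cubical polytope, so Corollary~\ref{cor: capped real-rootedness} applies and yields that $h(\sd(\CC);x)$ is real-rooted.

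In terms of what to say in the write-up, I would keep the proof short, essentially consisting of the citation to \cite{JZ00} and the explicit invocation of Corollary~\ref{cor: capped real-rootedness}. The one subtlety worth a remark (rather than an obstacle per se) is that \emph{neighborly cubical polytope} is a combinatorial notion, so strictly speaking the statement is really about boundary complexes realizable as some $C_n^d$; since those are the only explicitly known constructions, this matches how Problem~\ref{prob: subdivisions of polytopes} is being interpreted throughout Section~\ref{sec: apps}. The genuine content, namely constructing a stable shelling on a capped cubical complex by induction on the number of capping operations, has already been carried out inside the proof of Corollary~\ref{cor: capped real-rootedness}, so no further combinatorial work is needed here.
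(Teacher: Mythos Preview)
Your proposal is correct and matches the paper's own argument essentially verbatim: the paper also observes (immediately before stating the corollary) that the Joswig--Ziegler constructions $C_n^d$ are capped cubical by \cite[Comment~1]{JZ00} and then invokes Corollary~\ref{cor: capped real-rootedness}. Your remark about the scope of ``neighborly cubical polytope'' here referring to the explicit constructions $C_n^d$ is also consistent with how the paper frames it.
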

By combining Corollary~\ref{cor: cuboids}, Corollary~\ref{cor: capped real-rootedness}, and Corollary~\ref{cor: neighborly}, we obtain a positive answer to Problem~\ref{prob: subdivisions of polytopes} for the well-known families of cubical polytopes; namely, the cuboids, capped cubical polytopes, and the neighborly cubical polytopes. 
To deduce these results we used the theory of stable shellings, developed in Subsection~\ref{subsec: stable shellings}.  
As we will see in Subsection~\ref{subsec: simplicial polytopes}, we can also use stable shelling techniques to give an alternate proof of Brenti and Welker's original solution to Problem~\ref{prob: subdivisions of polytopes} in the case of simplicial polytopes. 
This suggests that the framework of stable shellings is perhaps appropriate for all polytopes, as it yields a proof for all known cases.  
In fact, by a theorem of Bruggesser and Mani \cite{BM72}, it is not unreasonable that these same techniques may be useful in addressing Problem~\ref{prob: subdivisions of polytopes} in its fullest generality.  
In Section~\ref{sec: stable line shellings}, we will explain some of the open questions pertaining to this approach in more detail.  
Before doing so, in Subsections~\ref{subsec: edgewise simplicial} and~\ref{subsec: edgewise cubical}, we will demonstrate how stable shellings can also be applied to subdivisions other than the barycentric subdivision.  
As one application, we will derive an answer to a second problem of Mohammadi and Welker \cite{MW13} for shellable simplicial complexes.  

\subsection{Barycentric subdivisions of simplicial polytopes}
\label{subsec: simplicial polytopes}
In this section, we give an alternative proof of the result of \cite{BW08} that motivated Problem~\ref{prob: subdivisions of polytopes}.  
Namely, we apply Theorem~\ref{thm: subdivisions of shellable complexes} to show that the $h$-polynomial of the boundary complex of a simplicial polytope has only real zeros.  
We will prove this using a similar narrative as in Subsections~\ref{subsubsec: cuboids} and~\ref{subsubsec: capped cubical polytopes}, in that we will use a shelling argument to decompose the complex into relative simplicial complexes, and show that the $h$-polynomials of each complex form an interlacing family.  
Recall that, by Proposition~\ref{prop: stable shellings of simplicial complexes}, any shelling of a simplicial complex is stable, and that stable shellings were defined in Subsection~\ref{subsec: stable shellings} to capture those shellings to which we can apply Theorem~\ref{thm: subdivisions of shellable complexes} for multiple different uniform subdivisions.  
In this subsection and the next, we will indeed see that we can apply Theorem~\ref{thm: subdivisions of shellable complexes} to any shelling of a simplicial complex with respect to the two most common uniform subdivisions: the barycentric subdivision and the edgewise subdivision.  

In this case, the relative complexes associated to facets of our shelling will be subdivided, half-open simplices.
In the following, let $\Delta_d$ denote the $d$-dimensional simplex, and let $\Delta_{d,\ell}$ denote the relative simplicial complex given by removing $\ell$ of the facets of $\Delta_d$ for $0\leq \ell \leq d+1$.  
Note that $\Delta_{d,0} = \Delta_d$.
%
%  Theorem \ref{thm: BW} appeared and was proved in \cite{BW08}; in this section we give an alternate proof of the following special case of this theorem.  
%  
%  \begin{proposition}
%  \label{prop:simp h}
%  Let $\Delta$ be the boundary complex of a $(d)$-dimensional  simplicial polytope.  Then, $h(sd(\Delta);x)$ has only real zeros.  
%  \end{proposition}
%  
%  
%  We will prove this using a similar narrative as in the previous section, in that we will use a shelling argument to decompose our complex into partially open pieces, and show that the h-vectors of the barycentric subdivisions of each piece form an interlacing family.  
%  
% The half-open pieces of such a decomposition will be half-open simplices.  The following classical result appears in, for e.g., \cite{BS18}.  
%  
%  \begin{proposition}\label{h*simp}
%  Let $\Delta$ be a unimodular $(d-1)-$simplex and $q$ be a point generic to $\Delta$. Let $S$ be the set of points in $\delta(\Delta)$ visible from $q$. Then,
%  \[h^*(\Delta/SS;z)=z^r,\]
%  where $r$ is the number of facets of $\Delta$ missing from $\Delta/S$.  
%  \end{proposition}
% 
We will require the following well-known result. 
%\textcolor{red}{This must be stated somewhere so we don't have to go thru this mild erhart thry detour, do you know where? LIAM: I rephrased this section so as to avoid the Ehrhart theory like you asked. :)  But I'm also sad cuz I like Ehrhart Theory. :) }

%---LEMMA: h-polynomial of a partially open simplex----
\begin{lemma}
\label{lem: h part open simp}
Let $d\geq1$ and $0\leq\ell\leq d+1$.  
Then
$
h(\Delta_{d,\ell};x) = x^\ell.
$
\end{lemma}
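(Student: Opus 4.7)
The plan is a direct computation: describe the faces of $\Delta_{d,\ell}$ explicitly, read off the $f$-polynomial in closed form, and then apply the substitution $x\mapsto x/(1-x)$ that defines the $h$-polynomial.

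First, I would identify the faces of $\Delta_{d,\ell}$. Label the vertices of $\Delta_d$ by $[d+1]$ and identify each face with its vertex set, so that the facet $F_i$ opposite vertex $i$ is the collection of faces $G\subseteq[d+1]\setminus\{i\}$. Assuming $\Delta_{d,\ell}$ is obtained by removing the $\ell$ facets opposite vertices $i_1,\ldots,i_\ell$, a face $G$ of $\Delta_d$ lies in the removed subcomplex if and only if $G$ fails to contain at least one of $i_1,\ldots,i_\ell$. Equivalently, the faces of $\Delta_{d,\ell}$ are precisely those containing the fixed $(\ell-1)$-face $F=\{i_1,\ldots,i_\ell\}$. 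In particular $F$ itself is the unique inclusion-minimal face of $\Delta_{d,\ell}$ when $\ell\geq 1$, and the face dimensions range over $\ell-1\leq k\leq d$.

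Next I would count: the $k$-dimensional faces of $\Delta_{d,\ell}$ correspond to $(k+1)$-element supersets of $F$ inside $[d+1]$, of which there are $\binom{d+1-\ell}{k+1-\ell}$. Reindexing via $j=k+1-\ell$ then gives
\[
f(\Delta_{d,\ell};x)=\sum_{j=0}^{d+1-\ell}\binom{d+1-\ell}{j}\,x^{j+\ell}=x^{\ell}(1+x)^{d+1-\ell}.
\]

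Finally, plugging into the definition of the $h$-polynomial yields
\[
h(\Delta_{d,\ell};x)=(1-x)^{d+1}\,f\!\left(\Delta_{d,\ell};\tfrac{x}{1-x}\right)=(1-x)^{d+1}\cdot\frac{x^{\ell}}{(1-x)^{\ell}}\cdot\frac{1}{(1-x)^{d+1-\ell}}=x^{\ell},
\]
as desired. I do not anticipate any real obstacle: the argument is elementary bookkeeping. The only points requiring care are tracking the correct dimension range and recognizing that $F$ itself (rather than $\emptyset$) is the minimal face of $\Delta_{d,\ell}$ once $\ell\geq 1$; the boundary cases $\ell=0$ (where $F=\emptyset$ and $h=1$) and $\ell=d+1$ (where only the top face survives and $h=x^{d+1}$) provide useful sanity checks on the formula.
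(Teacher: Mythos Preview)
Your proof is correct and takes a somewhat different route than the paper's. The paper computes $f(\Delta_{d,\ell};x)$ via inclusion--exclusion over the removed facets,
\[
f(\Delta_{d,\ell};x)=\sum_{j=0}^{\ell}(-1)^j\binom{\ell}{j}f(\Delta_{d-j};x),
\]
and then invokes $h(\Delta_{d-j};x)=1$ to obtain $h(\Delta_{d,\ell};x)=\sum_{j}(-1)^j\binom{\ell}{j}(1-x)^j=x^\ell$. You instead observe directly that the surviving faces are exactly the upper interval above the $(\ell-1)$-face $F=\{i_1,\ldots,i_\ell\}$, giving the closed form $f(\Delta_{d,\ell};x)=x^\ell(1+x)^{d+1-\ell}$ in one stroke. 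Your argument is arguably more transparent geometrically---it makes visible that $\Delta_{d,\ell}$ is combinatorially just the star of a face---while the paper's inclusion--exclusion is a bit more mechanical but has the mild advantage of not needing to identify the minimal surviving face. Both are entirely elementary and of comparable length.
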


\begin{proof}
By the Principle of Inclusion-Exclusion, we deduce that
\[
f(\Delta_{d,\ell};x) = \sum_{j=0}^\ell(-1)^j\binom{\ell}{j}f(\Delta_{d-j};x).
\]
Since $h(\Delta_d;x) = 1$ for all $d\geq 1$, it then follows that
\[
h(\Delta_{d,\ell};x) = \sum_{j=0}^\ell(-1)^j\binom{\ell}{j}(1-x)^j = x^\ell.
\]
\end{proof}

To make use of this observation, we need to generalize the main result of \cite{BW08} (i.e. ~\cite[Theorem 1]{BW08}) to relative complexes.  
We refer to the reader to \cite{BW08} for the definition of a Boolean cell complex.
%---LEMMA: Generalizing Brenti and Welker----
\begin{lemma}
\label{lem: generalizing BW}
Let $\CC$ be a $(d-1)$-dimensional Boolean cell complex and $\DD$ a subcomplex of $\CC$.
If the relative complex $\CC\setminus\DD$ is also $(d-1)$-dimensional then
\[
h(\sd(\CC)\setminus\sd(\DD);x) = \sum_{\ell=0}^dh_\ell(\CC\setminus\DD)A_{d,\ell}^{(1)}.
\]
\end{lemma}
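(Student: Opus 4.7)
The plan is to derive the identity by applying the classical Brenti--Welker theorem \cite[Theorem 1]{BW08} twice --- once to $\CC$ and once to $\DD$ --- and subtracting. Since $\CC\setminus\DD$ is $(d-1)$-dimensional by hypothesis, I would compute every $h$-polynomial in sight using the convention $m=d$, padding $h(\DD;x)$ with zero contributions in dimensions above $\dim(\DD)$ whenever $\DD$ is not itself $(d-1)$-dimensional. Under this convention, the identity $f(\CC;x) = f(\DD;x)+f(\CC\setminus\DD;x)$, together with its analogue after barycentric subdivision, yields
\[
h(\sd(\CC);x) \;=\; h(\sd(\DD);x) + h(\sd(\CC)\setminus\sd(\DD);x),
\]
while the same linearity gives $h_\ell(\CC\setminus\DD) = h_\ell(\CC) - h_\ell(\DD)$ coefficient-wise for each $\ell$.

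The next step is to invoke the Brenti--Welker identity
\[
h(\sd(\Omega);x) \;=\; \sum_{\ell=0}^{d} h_\ell(\Omega)\, A_{d,\ell}^{(1)}
\]
for any Boolean cell complex $\Omega$ of dimension at most $d-1$, with the $m=d$ convention in force throughout. Both sides depend linearly on the $f$-vector of $\Omega$ once $d$ is fixed, so the identity reduces to checking it on the single-cell generators, where it follows from \cite[Theorem 1]{BW08} in the pure top-dimensional case and from a direct computation using Lemma~\ref{lem: h part open simp} for cells of strictly lower dimension. Applying this identity once to $\CC$ and once to $\DD$ and subtracting then yields
\[
h(\sd(\CC)\setminus\sd(\DD);x) \;=\; \sum_{\ell=0}^{d} \bigl(h_\ell(\CC) - h_\ell(\DD)\bigr) A_{d,\ell}^{(1)} \;=\; \sum_{\ell=0}^{d} h_\ell(\CC\setminus\DD)\, A_{d,\ell}^{(1)},
\]
which is the claim.

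The main obstacle is justifying that the Brenti--Welker identity extends to the potentially non-pure complex $\DD$ under the global choice $m=d$; once this is in hand the rest is bookkeeping. If one prefers to avoid this extension, a self-contained alternative is a direct decomposition: because $\DD$ is closed under taking faces, a chain of faces of $\CC$ lies entirely in $\sd(\DD)$ precisely when its top element does, so $\sd(\CC)\setminus\sd(\DD)$ partitions over the top faces $F \in \CC\setminus\DD$ into relative complexes each isomorphic to $\sd(F)\setminus\sd(\partial F)$. The $h$-polynomial of each such piece can then be computed via Lemma~\ref{lem: h part open simp} and the pure-complex Brenti--Welker formula applied to $F$, and the sought expression follows by reassembling through Lemma~\ref{lem: relative decomposition}.
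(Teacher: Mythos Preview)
Your proposal is correct and follows essentially the same route as the paper: both arguments apply the Brenti--Welker machinery separately to $\CC$ and to $\DD$ (with the convention $m=d$ throughout) and then subtract, the only point requiring care being that the formula still holds for the possibly lower-dimensional subcomplex $\DD$. The paper carries this out at the level of explicit coefficient formulas, invoking \cite[Lemma~1]{BW08} for the $f$-vector of a barycentric subdivision and noting that this lemma remains valid in degree $d$ even when $\dim\DD<d-1$; your version packages the same computation as a linearity argument, and your alternative chain-decomposition approach is a genuinely different (and also valid) route that the paper does not use.
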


\begin{proof}
The definition of the $h$-polynomial of a relative complex $\CC\setminus\DD$ is known to be equivalent to the condition that
\[
h_r(\CC\setminus\DD) = \sum_{i=0}^r(-1)^{r-i}\binom{d-i}{r-i}f_{i-1}
\]
for all $r = 0,\ldots,d$.  
%By definition, the $h$-polynomial of the $(d-1)$-dimensional relative complex $\CC\setminus\DD$ is 
%\begin{equation*}
%\begin{split}
%h(\CC\setminus\DD;x) &= (1-x)^df\left(\CC\setminus\DD;\frac{x}{1-x}\right), \text{ or equivalently}	\\
%\sum_{\ell=0}^dh_\ell(\CC\setminus\DD)x^\ell &= \sum_{\ell = 0}^df_{\ell-1}(\CC\setminus\DD)x^\ell(1-x)^{d-\ell}.\\
%\end{split}
%\end{equation*}
%So by applying the operator $\I_d$ to $h(\CC\setminus\DD;x)$ we see that
%\[
%\sum_{\ell=0}^dh_\ell x^{d-\ell} = \sum_{\ell=0}^df_{\ell-1}(x-1)^{d-\ell}.
%\]
%However, this last expression is equivalent to 
%\[
%h_\ell(\CC\setminus\DD) = \sum_{i=0}^\ell(-1)^{\ell-i}\binom{d-i}{\ell-i}f_{i-1}
%\]
%for all $\ell = 0,\ldots,d$.  
From this formula it follows that for all $r=0,\ldots,d$
\begin{equation*}
\begin{split}
h_r(\sd(\CC)\setminus\sd(\DD)) 
&= \sum_{i=0}^r(-1)^{r-i}\binom{d-i}{r-i}f_{i-1}(\sd(\CC)\setminus\sd(\DD)),	\\
&= \sum_{i=0}^r(-1)^{r-i}\binom{d-i}{r-i}(f_{i-1}(\sd(\CC))-f_{i-1}(\sd(\DD)),	\\
&= \sum_{i=0}^r(-1)^{r-i}\binom{d-i}{r-i}f_{i-1}(\sd(\CC))-\sum_{i=0}^r(-1)^{r-i}\binom{d-i}{r-i}f_{i-1}(\sd(\DD)).	\\
\end{split}
\end{equation*}
Since $\sd(\CC)$ and $\sd(\DD)$ are both Boolean cell complexes then we can apply \cite[Lemma 1]{BW08}.  
This yields
\begin{equation*}
\begin{split}
h_r(\sd(\CC)\setminus\sd(\DD)) 
&= \sum_{i=0}^r(-1)^{r-i}\binom{d-i}{r-i}\sum_{m=0}^d f_{m-1}(\CC)S(m,k)m!	\\
&\,\,\,\,\,\,\,\,-\sum_{i=0}^r(-1)^{r-i}\binom{d-i}{r-i}\sum_{m=0}^d f_{m-1}(\DD)S(m,k)m!.	\\
\end{split}
\end{equation*}
Notice here that the formula for $f_{m-1}(\sd(\DD))$ given in \cite[Lemma 1]{BW08} still holds with respect to degree $d$ even if $\DD$ has dimension less than $d$.  
This is immediate from the proof of \cite[Lemma 1]{BW08} since $f_{m-1}(\DD) = 0$ for all $m$ greater than the dimension of $\DD$. 
Hence, it follows that
\begin{equation*}
\begin{split}
h_r(\sd(\CC)\setminus\sd(\DD)) 
&= \sum_{i=0}^r(-1)^{r-i}\binom{d-i}{r-i}\sum_{m=0}^d S(m,k)m! (f_{m-1}(\CC) - f_{m-1}(\DD)),	\\
&= \sum_{i=0}^r(-1)^{r-i}\binom{d-i}{r-i}\sum_{m=0}^d S(m,k)m! f_{m-1}(\CC\setminus\DD).	\\
\end{split}
\end{equation*}
Since we have assumed that $\CC\setminus\DD$ is $(d-1)$-dimensional, it follows that
\[
f_{m-1}(\CC\setminus\DD) = \sum_{\ell=0}^m\binom{d-\ell}{d-m}h_\ell(\CC\setminus\DD),
\]
and so
\begin{equation*}
\begin{split}
h_r(\sd(\CC)\setminus\sd(\DD)) 
&= \sum_{i=0}^r(-1)^{r-i}\binom{d-i}{r-i}\sum_{m=0}^d S(m,k)m! \sum_{\ell=0}^m\binom{d-\ell}{d-m}h_\ell(\CC\setminus\DD),	\\
&= \sum_{\ell=0}^d\left(\sum_{m=0}^d \sum_{i=0}^r(-1)^{r-i}\binom{d-i}{r-i}\binom{d-\ell}{d-m}S(m,k)m!\right)h_\ell(\CC\setminus\DD).	\\
\end{split}
\end{equation*}
In the proof of \cite[Theorem 1]{BW08}, it is shown that the coefficient of $h_\ell(\CC\setminus\DD)$ in the above expression is equal to the $k^{th}$ coefficient of $A_{d,\ell}^{(1)}$.  
Hence, 
\[
h(\sd(\CC)\setminus\sd(\DD);x) = \sum_{\ell=0}^dh_\ell(\CC\setminus\DD)A_{d,\ell}^{(1)}.
\]
\end{proof}

From Lemma~\ref{lem: h part open simp} and Lemma~\ref{lem: generalizing BW}, we recover the following proposition:
%---PROPOSITION: Max's Prop-----
 \begin{proposition}
 \label{prop: max}
Let $\Delta_{d-1}$ be a $(d-1)$-dimensional simplex, and let $0\leq \ell\leq d$ be the number of facets of $\Delta_{d-1}$ missing in $\Delta_{d-1,\ell}$.  
Then, 
  \[
  h(\sd(\Delta_{d-1,\ell}),x) = A_{d,\ell}^{(1)}.
  \]
 \end{proposition}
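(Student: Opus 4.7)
The plan is to derive this proposition as a direct consequence of the two immediately preceding lemmas. First I would identify $\Delta_{d-1,\ell}$ with a relative complex $\CC \setminus \DD$ to which Lemma~\ref{lem: generalizing BW} applies: take $\CC = \CC(\Delta_{d-1})$, the full simplicial complex of the $(d-1)$-simplex, and let $\DD$ be the subcomplex generated by some $\ell$ facets of $\Delta_{d-1}$ (together with all their subfaces). Both $\CC$ and $\DD$ are Boolean cell complexes (every face is a simplex, hence a Boolean cell), so the hypotheses of Lemma~\ref{lem: generalizing BW} are satisfied. Moreover, as long as $\ell \leq d$, the top-dimensional face of $\Delta_{d-1}$ is not in $\DD$, so $\CC \setminus \DD$ is genuinely $(d-1)$-dimensional, as required.

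Next I would invoke Lemma~\ref{lem: h part open simp} (with $d$ replaced by $d-1$) to compute
\[
h(\Delta_{d-1,\ell};x) = x^\ell,
\]
which says $h_k(\Delta_{d-1,\ell}) = \delta_{k,\ell}$ for $0 \leq k \leq d$. Plugging this into the formula provided by Lemma~\ref{lem: generalizing BW},
\[
h(\sd(\Delta_{d-1,\ell});x) = \sum_{k=0}^{d} h_k(\Delta_{d-1,\ell}) A_{d,k}^{(1)},
\]
only the single term with $k = \ell$ survives, yielding $h(\sd(\Delta_{d-1,\ell});x) = A_{d,\ell}^{(1)}$, as claimed.

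The only point that needs a small justification is the identification $\sd(\Delta_{d-1,\ell}) = \sd(\Delta_{d-1}) \setminus \sd(\DD)$, i.e.\ that barycentrically subdividing the relative complex agrees with taking the relative complex of barycentric subdivisions. This follows because the barycentric subdivision is defined facewise on the face poset, and removing the faces of $\DD$ from $\CC$ before or after subdividing produces the same set of chains in the face lattice. Since the proof is essentially a substitution, I do not anticipate any real obstacle; the work has been absorbed into the two preceding lemmas, and the main content of this proposition is simply to observe that $h(\Delta_{d-1,\ell};x) = x^\ell$ picks out a single term in the interlacing-sequence expansion of Lemma~\ref{lem: generalizing BW}, giving the Eulerian-like polynomial $A_{d,\ell}^{(1)}$ directly.
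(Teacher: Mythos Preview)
Your proposal is correct and follows exactly the approach the paper takes: the paper simply states that the proposition is recovered from Lemma~\ref{lem: h part open simp} and Lemma~\ref{lem: generalizing BW}, and you have spelled out precisely how those two lemmas combine (including the verification that the relative complex is $(d-1)$-dimensional and the identification of $\sd$ of the relative complex with the relative complex of the subdivisions).
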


 Applying Lemma~\ref{lem: colored eulerian} to Proposition~\ref{prop: max}, we see that the $h$-polynomials of the barycentric subdivision of a simplex restricted to half-open simplices form an interlacing family.  Since a shelling of the boundary of a simplicial polytope will decompose this complex into such half-open simplices, the $h$-polynomial of the barycentric subdivision of the complex will be real-rooted.
We summarize this observation in the following theorem, which is originally due to Brenti and Welker \cite{BW08}.
%---THEOREM: Brenti and Welker Reproof---
\begin{theorem}
\label{thm: BW reproof}
Let $\CC$ be a $(d-1)$-dimensional shellable simplicial complex.  
Then $h(\sd(\CC);x)$ is real-rooted.  
In particular, the $h$-polynomial of the barycentric subdivision of the boundary complex of a $d$-dimensional simplicial polytope is real-rooted.  
\end{theorem}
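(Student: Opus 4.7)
The plan is to apply Theorem~\ref{thm: subdivisions of shellable complexes} directly, using the barycentric subdivision $\sd:\sd(\CC)\to\CC$ as our chosen subdivision map and the fact that every shelling of a simplicial complex is stable.

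First I would fix a shelling $(F_1,\ldots,F_s)$ of $\CC$, which exists by hypothesis, and consider for each $i\in[s]$ the relative complex $\RR_i$ associated to $F_i$ by $(F_1,\ldots,F_s)$ and the barycentric subdivision. By Proposition~\ref{prop: stable shellings of simplicial complexes}, each $\RR_i$ (in the trivial subdivision) is stable; more concretely, the shelling condition says that $F_i\cap(F_1\cup\cdots\cup F_{i-1})$ is a union of $r_i$ facets of the simplex $F_i$ for some $0\leq r_i\leq d$ (with $r_1=0$). Since the barycentric subdivision respects subcomplexes, the relative complex $\RR_i$ for $\sd$ is naturally identified with $\sd(\Delta_{d-1,r_i})$, where $\Delta_{d-1,r_i}$ is the $(d-1)$-simplex with $r_i$ of its facets removed. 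By Proposition~\ref{prop: max}, we then have
\[
h(\RR_i;x)=h(\sd(\Delta_{d-1,r_i});x)=A_{d,r_i}^{(1)}.
\]

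Next I would invoke Lemma~\ref{lem: colored eulerian}, which tells us that the family $\left(A_{d,\ell}^{(1)}\right)_{\ell=0}^{d}$ is an interlacing sequence. Choose a permutation $\sigma\in\mathfrak{S}_s$ that sorts the facets so that $r_{\sigma(1)}\leq r_{\sigma(2)}\leq\cdots\leq r_{\sigma(s)}$; then $\bigl(h(\RR_{\sigma(i)};x)\bigr)_{i=1}^{s}=\bigl(A_{d,r_{\sigma(i)}}^{(1)}\bigr)_{i=1}^{s}$ is still an interlacing sequence (since any subsequence of an interlacing sequence, allowing repetitions, is interlacing by Proposition~\ref{prop: convex cones} combined with Lemma~\ref{lem: basic facts}). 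Theorem~\ref{thm: subdivisions of shellable complexes} then applies and yields that $h(\sd(\CC);x)$ is real-rooted.

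For the second sentence of the theorem, I would simply appeal to the classical Bruggesser--Mani theorem stating that the boundary complex of any (simplicial) polytope is shellable; the first part of the theorem then specializes to give real-rootedness of the barycentric subdivision of its boundary complex. The only conceptual point requiring care is the identification of $\RR_i$ with $\sd(\Delta_{d-1,r_i})$ and the corresponding computation of its $h$-polynomial; this is precisely what Proposition~\ref{prop: max} (via Lemma~\ref{lem: h part open simp} and Lemma~\ref{lem: generalizing BW}) has already done for us. Thus the argument is essentially an assembly of the machinery built up in the paper: stability of simplicial shellings, interlacing of colored Eulerian polynomials, and the general criterion of Theorem~\ref{thm: subdivisions of shellable complexes}; there is no substantive obstacle beyond carefully matching the relative-complex $\RR_i$ with the half-open simplex $\Delta_{d-1,r_i}$.
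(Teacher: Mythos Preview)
Your proposal is correct and follows essentially the same approach as the paper: the paper's proof simply cites Theorem~\ref{thm: subdivisions of shellable complexes}, Lemma~\ref{lem: colored eulerian}, and Proposition~\ref{prop: max} (together with Bruggesser--Mani for the polytope case), and your write-up just unpacks these references in more detail, including the identification $\RR_i\cong\sd(\Delta_{d-1,r_i})$ and the sorting permutation $\sigma$.
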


\begin{proof}
The result is an immediate consequence of Theorem~\ref{thm: subdivisions of shellable complexes}, Lemma~\ref{lem: colored eulerian}, and Proposition~\ref{prop: max}.  
The special case of boundary complexes of simplicial polytopes follows from the fact that the boundary complex of any polytope admits a shelling \cite{BM72}.
\end{proof}

The proofs of Theorems~\ref{thm: shellable cubical complexes} and~\ref{thm: BW reproof} given here suggest that stable shellability of all boundary complexes of polytopes could be key to answering Problem~\ref{prob: subdivisions of polytopes} in its fullest generality.
In Section~\ref{sec: stable line shellings}, we offer some first results in this direction and pose some related open questions. 
However, we first examine some applications of Theorem~\ref{thm: subdivisions of shellable complexes} and stable shellings to subdivisions other than the barycentric subdivision.  

%---SUBSECTION: Edgewise Subdivisions of Simplicial Complexes
\subsection{Edgewise subdivisions of simplicial complexes}
\label{subsec: edgewise simplicial}
The edgewise subdivision of a simplicial complex is another well-studied subdivision that arises frequently in algebraic and topological contexts (see for instance \cite{BW09,BruR05,EG00,G89}).  
Within algebra, it is intimately tied to the Veronese construction, and it is considered to be the algebraic analogue of barycentric subdivision \cite[Acknowledgements]{BW09}.  
For $r\geq1$, the $r^{th}$ edgewise subdivision of a simplex is defined as follows:
Suppose that $\Delta:=\conv(e^{(1)},\ldots,e^{(d)})\subset\R^d$ is a $(d-1)$-dimensional simplex with $0$-dimensional faces $e^{(1)},\ldots,e^{(d)}$, the standard basis vectors in $\R^d$.  
For $x = (x_1,\ldots,x_d)\in\Z^d$, we let
\[
\supp(x):=\{i\in[d]:x_i\neq0\},
\]
and we define the linear transformation $\iota:\R^d\rightarrow\R^d$ by
\[
\iota: x\longmapsto (x_1,x_1+x_2,\ldots,x_1+\cdots+x_d).
\]
The {\em $r^{th}$ edgewise subdivision} of $\Delta$ is the simplicial complex $\Delta^{\langle r\rangle}$ whose set of $0$-dimensional faces are the lattice points in $r\Delta\cap\Z^d$ and for which $F\subset r\Delta\cap\Z^d$ is a face of $\Delta^{\langle r\rangle}$ if and only if 
\[
\bigcup_{x\in F}\{\supp(x)\}\in\Delta,
\]
and for all $x,y\in F$ either
$
\iota(x)-\iota(y)\in\{0,1\}^d
$
or
$
\iota(y)-\iota(x)\in\{0,1\}^d.
$
Given a simplicial complex $\CC$, the {\em $r^{th}$ edgewise subdivision} of $\CC$, denoted $\CC^{\langle r\rangle}$, is given by gluing together the $r^{th}$ edgewise subdivisions of each of its facets.
In \cite{MW13}, the authors proposed the following problem.  
%---PROBLEM: Edgewise Subdivisions-----
\begin{problem}
\cite[Problem 27]{MW13}
\label{prob: edgewise subdivisions}
If $\CC$ is a $d$-dimensional simplicial complex with $h_k(\CC)\geq0$ for all $0\leq k\leq d+1$, is $h(\CC^{\langle r\rangle};x)$ real-rooted whenever $r>d$?
\end{problem}
%-------------------------------------------------------
Applying Theorem~\ref{thm: subdivisions of shellable complexes} and Proposition~\ref{prop: stable shellings of simplicial complexes}, we will give a positive answer to Problem~\ref{prob: edgewise subdivisions} for shellable simplicial complexes via geometric methods.  
We then also observe that a positive answer to Problem~\ref{prob: edgewise subdivisions} for both shellable and non-shellable complexes follows from some recent enumerative results of Jochemko \cite{J16}.  
%The first solution will apply to shellable simplicial complexes, and it will use the geometric approach given by Theorem~\ref{thm: subdivisions of shellable complexes}.  
%The second solution follows from some recent enumerative results of Jochemko \cite{J16}, which yield an answer to the problem in its fullest generality (i.e., for both shellable and nonshellable simplicial complexes).
To do this, we first note that for every $r\geq1$ and polynomial $p\in\R[x]$ there are uniquely determined polynomials $p^{(0)},p^{(1)},\ldots,p^{(r-1)}\in\R[x]$ satisfying
\[
p = p^{(0)}(x^r)+xp^{(1)}(x^r)+x^2p^{(2)}(x^r)+\cdots+x^{r-1}p^{(r-1)}(x).
\]
We define the linear operator
\[
\,^{\langle r,\ell\rangle}:\R[x]\longrightarrow\R[x]
\quad
\mbox{ where}
\quad
\,^{\langle r,\ell\rangle}:p\longrightarrow p^{(\ell)},
\]
and the polynomial
$
p_{(r,d)}:=(1+x+\cdots+x^{r-1})^d.
$
It is well-known that the sequence
\begin{equation}
\label{eqn: interlacing sequence}
\left(p_{(r,d)}^{\langle r,r-\ell\rangle}\right)_{\ell=1}^r = \left(p_{(r,d)}^{\langle r,r-1\rangle},p_{(r,d)}^{\langle r,r-2\rangle},\ldots,p_{(r,d)}^{\langle r,0\rangle}\right),
\end{equation}
is an interlacing sequence (see \cite[Remark 4.2]{S19} or \cite{J16}, for instance).
On the other hand, \cite[Equation 21]{A14} shows that for any $d$-dimensional simplicial complex $\CC$
\begin{equation}
\label{eqn: edgewise h}
h(\CC^{\langle r\rangle};x) = ((1+x+x^2+\cdots+x^{r-1})^{d+1}h(\Delta;x))^{\langle r,0\rangle}
\end{equation}
for all $r\geq 1$.
Let $\Delta_d$ denote the $d$-dimensional simplex, and let $\Delta_{d,\ell}$ denote the relative simplicial complex given by removing $\ell$ of the facets of $\Delta_d$ for $0\leq \ell \leq d+1$.  
%---LEMMA: Half-Open Simplicial for Edgewise---
\begin{lemma}
\label{lem: half-open edgewise}
Let $d\geq1$, $r> d$, and $0<\ell\leq d+1$.  
Then
\[
h(\Delta_{d,\ell}^{\langle r\rangle};x) = xp_{(r,d+1)}^{\langle r,r-\ell\rangle}.
\]
\end{lemma}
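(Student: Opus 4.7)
The plan is to combine inclusion--exclusion with~\eqref{eqn: edgewise h} to reduce the identity to two short algebraic manipulations of the operator $\,^{\langle r,0\rangle}$. Each intersection of $j$ of the removed facets of $\Delta_d$ is a $(d-j)$-simplex $\Delta_{d-j}$, and this structure is preserved by $r$-th edgewise subdivision, so inclusion--exclusion yields
\[
f(\Delta_{d,\ell}^{\langle r\rangle};x) = \sum_{j=0}^{\ell}(-1)^j\binom{\ell}{j}f(\Delta_{d-j}^{\langle r\rangle};x).
\]
Converting to $h$-polynomials (relative to ambient dimension $d$) and pulling out the factors of $(1-x)^j$ coming from the dimension discrepancy between $\Delta_{d-j}$ and $\Delta_d$ gives
\[
h(\Delta_{d,\ell}^{\langle r\rangle};x) = \sum_{j=0}^{\ell}(-1)^j\binom{\ell}{j}(1-x)^j h(\Delta_{d-j}^{\langle r\rangle};x).
\]
Since $h(\Delta_{d-j};x)=1$, one application of~\eqref{eqn: edgewise h} reduces each summand via $h(\Delta_{d-j}^{\langle r\rangle};x) = p_{(r,d-j+1)}^{\langle r,0\rangle}$.

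The crux of the argument is the algebraic identity $p_{(r,d+1)}(1-x)^j = (1-x^r)^j p_{(r,d-j+1)}$, valid for $0\leq j \leq d+1$ and obtained by combining $1-x^r = (1-x)p_{(r,1)}$ with the multiplicativity $p_{(r,d+1)} = p_{(r,j)}p_{(r,d-j+1)}$. Paired with the elementary identity $(q(x^r)p(x))^{\langle r,0\rangle} = q(x)p^{\langle r,0\rangle}(x)$, applied with $q=(1-x)^j$, this yields
\[
(1-x)^j p_{(r,d-j+1)}^{\langle r,0\rangle} = \bigl(p_{(r,d+1)}(1-x)^j\bigr)^{\langle r,0\rangle}.
\]
Substituting into the alternating sum and using linearity of $\,^{\langle r,0\rangle}$, the binomial theorem collapses the expression:
\[
h(\Delta_{d,\ell}^{\langle r\rangle};x) = \Bigl(p_{(r,d+1)}\sum_{j=0}^{\ell}(-1)^j\binom{\ell}{j}(1-x)^j\Bigr)^{\langle r,0\rangle} = \bigl(x^\ell p_{(r,d+1)}\bigr)^{\langle r,0\rangle}.
\]

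It remains to verify that $\bigl(x^\ell p(x)\bigr)^{\langle r,0\rangle} = x\cdot p^{\langle r,r-\ell\rangle}$ whenever $1\leq \ell\leq r$, which is a direct coefficient comparison: the $k$-th coefficient of each side equals the coefficient of $x^{kr-\ell}$ in $p(x)$, understood to be zero when $kr<\ell$, so both sides vanish at $k=0$. The hypothesis $r>d$ enters exactly here, ensuring $\ell\leq d+1\leq r$, so the identity applies with $p=p_{(r,d+1)}$ to deliver $h(\Delta_{d,\ell}^{\langle r\rangle};x) = x\,p_{(r,d+1)}^{\langle r,r-\ell\rangle}$. The main obstacle is keeping track of the polynomial identity in the second step; once $p_{(r,d+1)}(1-x)^j = (1-x^r)^j p_{(r,d-j+1)}$ is in hand, the remainder of the argument is routine algebra.
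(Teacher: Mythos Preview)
Your proof is correct and follows essentially the same approach as the paper: inclusion--exclusion on the removed facets, conversion to $h$-polynomials, application of~\eqref{eqn: edgewise h}, the identity $(1-x)^j p_{(r,d-j+1)}^{\langle r,0\rangle} = \bigl((1-x^r)^j p_{(r,d-j+1)}\bigr)^{\langle r,0\rangle} = \bigl((1-x)^j p_{(r,d+1)}\bigr)^{\langle r,0\rangle}$, and then the binomial collapse to $\bigl(x^\ell p_{(r,d+1)}\bigr)^{\langle r,0\rangle}$. The only difference is cosmetic: the paper carries $h(\Delta_{d-j};x)$ through the sum and recognizes the result as $h(\Delta_{d,\ell};x)=x^\ell$ via Lemma~\ref{lem: h part open simp}, whereas you set $h(\Delta_{d-j};x)=1$ at the outset and apply the binomial theorem directly, and you spell out the final extraction $(x^\ell p)^{\langle r,0\rangle}=x\,p^{\langle r,r-\ell\rangle}$ more explicitly than the paper does.
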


\begin{proof}
Notice first that the relative complex $\Delta_{d,\ell}^{\langle r\rangle}$ can be constructed in two equivalent ways: 
Either we first remove the $\ell$ facets of $\Delta_d$ and then apply the subdivision procedure outlined in the definition of the edgewise subdivision to the corresponding geometric realization of the half-open simplex $\Delta_{d,\ell}$, or we first compute $\Delta_d^{\langle r\rangle}$ and then remove the faces of $\Delta_d^{\langle r\rangle}$ lying in the $\ell$ facets of $\Delta_d$ scheduled for removal. 
For the purposes of this proof, we work with the latter construction.  
Our first goal, then, is to prove the following fact in analogy to equation~\eqref{eqn: edgewise h}:
\[
h(\Delta_{d,\ell}^{\langle r\rangle};x) = ((1+x+\cdots+x^{r-1})^{d+1}h(\Delta_{d,\ell};x))^{\langle r,0\rangle}.
\]

Given the chosen construction of $\Delta_{d,\ell}^{\langle r\rangle}$, we know that
\[
f(\Delta_{d,\ell}^{\langle r\rangle};x) = \sum_{j=0}^\ell(-1)^j\binom{\ell}{j}f(\Delta_{d-j}^{\langle r\rangle};x),
\] 
and so 
\begin{equation*}
\begin{split}
h(\Delta_{d,\ell}^{\langle r\rangle};x) 
&= (1-x)^{d+1}\sum_{j=0}^\ell(-1)^j\binom{\ell}{j}f\left(\Delta_{d-j}^{\langle r\rangle};\frac{x}{1-x}\right),		\\
&= \sum_{j=0}^\ell(-1)^j\binom{\ell}{j}(1-x)^jh(\Delta_{d-j}^{\langle r\rangle};x).						\\
\end{split}
\end{equation*}
Since $\Delta_{d-j}$ is a simplicial complex, it follows from equation~\eqref{eqn: edgewise h} that
\[
h(\Delta_{d-j}^{\langle r\rangle};x) = ((1+x+\cdots+x^{r-1})^{d+1-j}h(\Delta_{d-j};x))^{\langle r,0\rangle}.
\]
Since
\begin{equation*}
\begin{split}
(1-x)^j((1+\cdots+x^{r-1})^{d+1-j}&h(\Delta_{d-j};x))^{\langle r,0\rangle} \\
&= ((1-x^r)^j(1+\cdots+x^{r-1})^{d+1-j}h(\Delta_{d-j};x))^{\langle r,0\rangle},\\
\end{split}
\end{equation*}
it follows that
\begin{equation*}
\begin{split}
h(\Delta_{d,\ell}^{\langle r\rangle};x)
&= \sum_{j=0}^\ell(-1)^j\binom{\ell}{j}((1-x^r)^j(1+\cdots+x^{r-1})^{d+1-j}h(\Delta_{d-j};x))^{\langle r,0\rangle},	\\
&= \left((1+\cdots+x^{r-1})^{d+1}\left(\sum_{j=0}^\ell(-1)^j\binom{\ell}{j}(1-x)^jh(\Delta_{d-j};x)\right)\right)^{\langle r,0\rangle},	\\
&= \left((1+\cdots+x^{r-1})^{d+1}h(\Delta_{d,\ell};x)\right)^{\langle r,0\rangle},	\\
\end{split}
\end{equation*}
as desired.
We then note that $h(\Delta_{d,\ell};x) = x^\ell$, by Lemma~\ref{lem: h part open simp}.
%\[
%h(\Delta_{d,\ell};x) = \sum_{j=0}^\ell(-1)^j\binom{\ell}{j}(1-x)^j = x^\ell.
%\]
%(See for instance, Lemma~\ref{lem: h part open simp}.)
Since $r>d$ and $0< \ell\leq d+1$, it follows that
\[
h(\Delta_{d,\ell}^{\langle r\rangle};x) = xp_{(r,d+1)}^{\langle r,r-\ell\rangle},
\]
which completes the proof.
\end{proof}

Lemma~\ref{lem: half-open edgewise} gives the necessary tools to positively answer Problem~\ref{prob: edgewise subdivisions} for shellable simplicial complexes.
%---THEOREM: Edgewise Subdivisions of Shellable Complexes----
\begin{theorem}
\label{thm: edgewise shellable}
Let $\CC$ be a $d$-dimensional shellable simplicial complex, and let $r>d$.
Then $h(\CC^{\langle r\rangle};x)$ is real-rooted.
\end{theorem}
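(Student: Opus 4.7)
The plan is to apply Theorem~\ref{thm: subdivisions of shellable complexes} to the subdivision $\varphi:\CC^{\langle r\rangle}\to\CC$ together with an appropriately permuted shelling of $\CC$. First, I would fix any shelling $(F_1,\ldots,F_s)$ of $\CC$; by Proposition~\ref{prop: stable shellings of simplicial complexes} this shelling is automatically stable. Since each $F_i$ is a simplex, the initial segment $F_i\cap(F_1\cup\cdots\cup F_{i-1})=G_1\cup\cdots\cup G_{\ell_i}$ consists of $\ell_i$ full facets of $F_i$, with $\ell_1=0$ and $\ell_i\in\{1,\ldots,d+1\}$ for $i>1$. Using the alternative construction of $\Delta_{d,\ell}^{\langle r\rangle}$ noted at the start of the proof of Lemma~\ref{lem: half-open edgewise}, the relative complex $\RR_i$ associated to $F_i$ by $(F_1,\ldots,F_s)$ and $\varphi$ is therefore abstractly isomorphic to $\Delta_{d,\ell_i}^{\langle r\rangle}$.

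Next, I would read off $h(\RR_i;x)$ explicitly. Equation~\eqref{eqn: edgewise h} applied to $\Delta_d$ (where $h(\Delta_d;x)=1$) gives $h(\RR_1;x)=p_{(r,d+1)}^{\langle r,0\rangle}$, and for $i>1$ Lemma~\ref{lem: half-open edgewise} yields $h(\RR_i;x)=x\,p_{(r,d+1)}^{\langle r,r-\ell_i\rangle}$; the hypothesis $r>d$ ensures $r-\ell_i\geq 0$ whenever $\ell_i\leq d+1$. Every $h(\RR_i;x)$ therefore belongs to the finite family
\[
q_0:=p_{(r,d+1)}^{\langle r,0\rangle},\qquad q_\ell:=x\,p_{(r,d+1)}^{\langle r,r-\ell\rangle}\quad(1\leq\ell\leq d+1).
\]
I would then take $\sigma\in\mathfrak{S}_s$ to be any permutation that lists $F_1$ first and orders the remaining facets by weakly increasing $\ell_i$. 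The resulting sequence $(h(\RR_{\sigma(i)};x))_{i=1}^s$ is a subsequence of $(q_0,q_1,\ldots,q_{d+1})$ with weakly increasing indices, and any repeats pose no obstacle since $q\prec q$ for any real-rooted $q$.

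The main step, and where I expect the only real bookkeeping to live, is to verify that the extended list $(q_0,q_1,q_2,\ldots,q_{d+1})$ is itself an interlacing sequence. For this I would combine three ingredients: (i) the interlacing of $\bigl(p_{(r,d+1)}^{\langle r,r-\ell\rangle}\bigr)_{\ell=1}^r$ recorded in~\eqref{eqn: interlacing sequence}; (ii) two applications of Lemma~\ref{lem: basic facts}~\eqref{eqn: times x}, which show that multiplication by $x$ preserves $\prec$ between polynomials with positive leading coefficients, and hence $q_1\prec q_2\prec\cdots\prec q_{d+1}$; and (iii) a single application of Lemma~\ref{lem: basic facts}~\eqref{eqn: times x} to the relation $p_{(r,d+1)}^{\langle r,r-\ell\rangle}\prec p_{(r,d+1)}^{\langle r,0\rangle}$, valid for $1\leq\ell\leq d+1\leq r$, which rewrites as $q_0\prec q_\ell$. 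The delicate point is simply to keep careful track of the order reversal inherent in Lemma~\ref{lem: basic facts}~\eqref{eqn: times x}. Once the interlacing of the extended list is in hand, Theorem~\ref{thm: subdivisions of shellable complexes} immediately yields real-rootedness of $h(\CC^{\langle r\rangle};x)$.
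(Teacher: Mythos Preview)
Your proposal is correct and follows essentially the same route as the paper: decompose $\CC^{\langle r\rangle}$ along a shelling into relative complexes $\RR_i\cong\Delta_{d,\ell_i}^{\langle r\rangle}$, invoke Lemma~\ref{lem: half-open edgewise} and equation~\eqref{eqn: edgewise h} to identify $h(\RR_i;x)$ with $p_{(r,d+1)}^{\langle r,0\rangle}$ or $x\,p_{(r,d+1)}^{\langle r,r-\ell_i\rangle}$, and then use the interlacing of~\eqref{eqn: interlacing sequence} together with Lemma~\ref{lem: basic facts}\eqref{eqn: times x} to conclude via Theorem~\ref{thm: subdivisions of shellable complexes}. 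Your write-up is in fact slightly more explicit than the paper's in spelling out the two-step use of Lemma~\ref{lem: basic facts}\eqref{eqn: times x} and the choice of $\sigma$, but the argument is the same.
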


\begin{proof}
Since $\CC$ is a shellable polytopal complex, then we can write $\CC^{\langle r\rangle}$ as a disjoint union of relative simplicial complexes $\RR_i$, one for each facet in a shelling order $(F_1,\ldots,F_s)$ of $\CC$, such that
\[
h(\CC^{\langle r\rangle};x) = \sum_{i=1}^sh(\RR_i;x),
\]
as in the hypothesis of Theorem~\ref{thm: subdivisions of shellable complexes}.
Since each $\RR_i$ is equal to $\Delta_{d,\ell}^{\langle r\rangle}$ for some $0\leq \ell\leq d+1$, then, by Lemma~\ref{lem: half-open edgewise} and equation~\eqref{eqn: edgewise h}, $h(\CC^{\langle r\rangle};x)$ is a convex combination of the polynomials in the sequence
\[
\left(p_{(r,d+1)}^{\langle r,0\rangle},xp_{(r,d+1)}^{\langle r,r-1\rangle},xp_{(r,d+1)}^{\langle r,r-2\rangle},\ldots,xp_{(r,d+1)}^{\langle r,1\rangle},xp_{(r,d+1)}^{\langle r,0\rangle}\right).
\]
Since the sequence~\eqref{eqn: interlacing sequence} is interlacing, it follows from Lemma~\ref{lem: basic facts} that this sequence is also interlacing.
By Theorem~\ref{thm: subdivisions of shellable complexes}, we conclude that $h(\CC^{\langle r\rangle};x)$ is real-rooted.
\end{proof}

As an immediate corollary to Theorem~\ref{thm: edgewise shellable} we get that, for $r\geq d$, the $r^{th}$ edgewise subdivision of the boundary complex of any $d$-dimensional simplicial polytope has a real-rooted $h$-polynomial.  
%---COROLLARY: Edgewise Polytopes------
\begin{corollary}
\label{cor: edgewise polytopes}
Let $\CC$ be the boundary complex of a $d$-dimensional simplicial polytope.  
Then for $r\geq d$, the edgewise subdivision $\CC^{\langle r\rangle}$ of $\CC$ has a real-rooted $h$-polynomial. 
\end{corollary}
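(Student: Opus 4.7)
The plan is to apply Theorem~\ref{thm: edgewise shellable} directly, with the only subtlety being a careful bookkeeping of dimension conventions. Recall that in our setup, the boundary complex $\CC$ of a $d$-dimensional simplicial polytope $P$ is itself a $(d-1)$-dimensional simplicial complex, since its facets are the $(d-1)$-dimensional faces of $P$.

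First, I would invoke the classical theorem of Bruggesser and Mani~\cite{BM72}, which guarantees that the boundary complex of any convex polytope is shellable; in particular $\CC$ admits a shelling $(F_1,\ldots,F_s)$. Thus $\CC$ satisfies the hypotheses of Theorem~\ref{thm: edgewise shellable} with ambient dimension $d-1$ in place of $d$. The theorem then asserts that $h(\CC^{\langle r \rangle};x)$ is real-rooted whenever $r > d-1$, which is equivalent to $r \geq d$ since $r$ is a positive integer. This immediately yields the claim.

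There is essentially no obstacle here: the whole content of the corollary is packaged in the dimension-shift from $d$-dimensional polytope to $(d-1)$-dimensional boundary complex, together with the existence of a shelling. The heavy lifting (the interlacing of the polynomials $p_{(r,d)}^{\langle r,r-\ell \rangle}$, the identification of the relative complex $h$-polynomials via Lemma~\ref{lem: half-open edgewise}, and the application of Theorem~\ref{thm: subdivisions of shellable complexes} together with Proposition~\ref{prop: stable shellings of simplicial complexes}) has already been done in the proof of Theorem~\ref{thm: edgewise shellable}. Consequently the proof of the corollary should consist of little more than these two citations, with the verification that $r \geq d$ is the correct bound for a $(d-1)$-dimensional complex.
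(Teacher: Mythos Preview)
Your proposal is correct and matches the paper's approach exactly: the corollary is stated as immediate from Theorem~\ref{thm: edgewise shellable}, and the only content is the shellability of $\CC$ via Bruggesser--Mani~\cite{BM72} together with the observation that the $(d-1)$-dimensional boundary complex turns the bound $r>d-1$ into $r\geq d$.
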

On the other hand, Theorem~\ref{thm: edgewise shellable} holds more generally.
This, in fact, follows directly from some recent results of Jochemko \cite{J16}. 
%The following gives a complete answer to Problem~\ref{prob: edgewise subdivisions} (i.e., for both shellable and nonshellable complexes).
%---THEOREM: Edgewise---
\begin{theorem}[Essentially due to \cite{J16}]
\label{thm: edgewise}
If $\CC$ is a $d$-dimensional simplicial complex with $h_k(\CC)\geq 0$ for all $0\leq k\leq d+1$ then $h(\CC^{\langle r\rangle};x)$ is real-rooted whenever $r>d$.
\end{theorem}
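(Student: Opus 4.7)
The plan is to observe that the argument used for Theorem~\ref{thm: edgewise shellable} never truly required a shelling: all it needed was to write $h(\CC^{\langle r \rangle}; x)$ as a nonnegative combination of elements from the interlacing sequence~\eqref{eqn: interlacing sequence}. Under the hypothesis $h_k(\CC) \geq 0$, we can obtain such a decomposition directly from equation~\eqref{eqn: edgewise h}, without recourse to shellability or to any geometric decomposition of $\CC^{\langle r \rangle}$.

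Concretely, I would write $h(\CC; x) = \sum_{k=0}^{d+1} h_k(\CC)\, x^k$ and substitute into equation~\eqref{eqn: edgewise h}. By linearity of the operator $\,^{\langle r, 0 \rangle}$ this yields
\[
h(\CC^{\langle r \rangle}; x) \;=\; \sum_{k=0}^{d+1} h_k(\CC) \left( x^k \, p_{(r, d+1)}(x) \right)^{\langle r, 0 \rangle}.
\]
A short coefficient-extraction calculation shows that, for $0 \leq k \leq r$ and any $p \in \R[x]$,
\[
\left( x^k p(x) \right)^{\langle r, 0 \rangle} =
\begin{cases}
 p^{\langle r, 0 \rangle}(x), & k = 0, \\
 x \cdot p^{\langle r, r-k \rangle}(x), & 0 < k \leq r.
\end{cases}
\]
Since $r > d$ forces $k \leq d+1 \leq r$ for every $k$ appearing in the sum, this identity applies in every term. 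Consequently, $h(\CC^{\langle r \rangle}; x)$ is a \emph{nonnegative} combination of the polynomials in
\[
\left( p_{(r, d+1)}^{\langle r, 0 \rangle},\; x\, p_{(r, d+1)}^{\langle r, r-1 \rangle},\; x\, p_{(r, d+1)}^{\langle r, r-2 \rangle},\; \ldots,\; x\, p_{(r, d+1)}^{\langle r, r-(d+1) \rangle} \right).
\]

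The Jochemko result recorded just above (in the discussion of Lemma~\ref{lem: half-open edgewise}) states that the sequence $\bigl(p_{(r, d+1)}^{\langle r, r-\ell \rangle}\bigr)_{\ell=1}^{r}$ is interlacing; by the basic closure properties of interlacing collected in Lemma~\ref{lem: basic facts} (in particular, that multiplying by $x$ preserves the relevant interlacing relations when leading coefficients are positive), the enlarged sequence displayed above is also interlacing. Proposition~\ref{prop: convex cones} then guarantees that any nonnegative combination of its members is real-rooted, which completes the argument.

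The only non-cosmetic step is the coefficient identity for $(x^k p)^{\langle r, 0 \rangle}$ and the verification that appending the extra term $p_{(r, d+1)}^{\langle r, 0 \rangle}$ at the front of the Jochemko sequence still yields an interlacing family; both are routine, so I do not anticipate a genuine obstacle. The essential content is really due to Jochemko~\cite{J16}, as already flagged in the theorem attribution; our role is simply to package his interlacing result with equation~\eqref{eqn: edgewise h} and the hypothesis $h_k(\CC) \geq 0$.
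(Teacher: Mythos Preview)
Your proposal is correct and follows essentially the same route as the paper: both arguments feed $h(\CC;x)$ into equation~\eqref{eqn: edgewise h} and invoke Jochemko's interlacing results for the sequence~\eqref{eqn: interlacing sequence} to conclude real-rootedness. The only difference is cosmetic: the paper cites \cite[Theorem~1.1 and Lemma~3.1]{J16} as a black box asserting that $((1+x+\cdots+x^{r-1})^{d+1}p)^{\langle r,0\rangle}$ is real-rooted for any nonnegative $p$ of degree at most $d+1$ when $r>d$, whereas you unpack that statement into the explicit monomial decomposition and interlacing-sequence argument (essentially reproducing, in this special case, what Jochemko's proof does).
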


\begin{proof}
The proof is given by combining an observation of Athanasiadis in \cite{A14} with some recent results of Jochemko \cite{J16}. 
Combining \cite[Theorem 1.1]{J16} with \cite[Lemma 3.1]{J16} for $i = 0$, we see that the polynomial
$
((1+x+x^2+\cdots+x^{r-1})^{d+1}p)^{\langle r,0\rangle}
$
has only real zeros whenever $p$ is degree ${d+1}$ with only nonnegative coefficients and $r> d$.  
On the other hand, it follows from equation~\eqref{eqn: edgewise h} that
\[
h(\CC^{\langle r\rangle};x) = ((1+x+x^2+\cdots+x^{r-1})^{d+1}h(\CC;x))^{\langle r,0\rangle}
\]
for all $r\geq 1$.
The result follows.  
\end{proof}

Theorem~\ref{thm: edgewise} shows that the geometric approach used in Theorem~\ref{thm: edgewise shellable} was not necessary, as it was for the solution to Problem~\ref{prob: subdivisions of polytopes} for cuboids, capped cubical polytopes, and neighborly cubical polytopes given in Subsection~\ref{subsec: barycentric cubical}.
On the other hand, the geometric proof of Theorem~\ref{thm: edgewise shellable} highlights that the applications of Theorem~\ref{thm: subdivisions of shellable complexes} are not limited to barycentric subdivisions. 
In the next subsection, a similar result for the edgewise subdivision of a cube is derived.  
In this case, there is currently no other proof aside from the geometric methods developed in this paper.

%---SUBSECTION: Edgewise Subdivisions of Cubical Complexes
\subsection{Edgewise subdivisions of cubical complexes}
\label{subsec: edgewise cubical}
In Subsection~\ref{subsec: edgewise simplicial} we defined the edgewise subdivision of a simplicial complex.  
We now extend this definition to cubical complexes. 
To do so, we perform the same operations on a unit cube that were performed on the standard simplex $\Delta$ in the construction of the $r^{th}$ edgewise subdivision of a simplex. 
To reiterate, let $\square_d$ denote the (abstract) $d$-dimensional cube, and consider its geometric realization $[0,1]^d$ and $r[0,1]^d = [0,r]^d$, the $r^{th}$ dilation of $[0,1]^d$.   
Recall the map $\iota$ defined in Subsection~\ref{subsec: edgewise simplicial} that sends $(x_1, \dots, x_d) \in C_d \cap \Z^d$ to $(x_1, x_1 + x_2, \dots, x_1 + \dots + x_d)$.  
We define the {\em $r^{th}$ edgewise subdivision} $\square_{d}^{\langle r\rangle}$ of the $d$-dimensional cube in terms of a subdivision of its geometric realization $[0,1]^d$ as follows:  
Let $A \subset [0,1]^d \cap \Z^d$. 
Then $\conv(A)$ is a face of the subdivision if and only if $\iota(v-v')$ or $-\iota(v-v')$ is in $\{0,1\}^d$ for all $v,v' \in A$.  
We first note that this a unimodular triangulation of $[0,1]^d$, as it splits the dilated cube $[0,r]^d$ into unit cubes which are each triangulated according to (a rotated version of) the standard unimodular triangulation of $[0,1]^d$; that is, the triangulation induced by the hyperplanes $x_i = x_j$ for all $1\leq i<j\leq d$.
Given a cubical complex $\CC$, its {\em $r^{th}$ edgewise subdivision}, denoted $\CC^{\langle r \rangle}$ is given by gluing together the $r^{th}$ edgewise subdivisions of each of its facets. 

Stable shellings were defined so as to capture those shellings to which Theorem~\ref{thm: subdivisions of shellable complexes} can be applied for multiple different subdivisions.  
In Subsections~\ref{subsec: simplicial polytopes} and~\ref{subsec: edgewise simplicial}, we saw this to be the case for simplicial complexes.  
To further substantiate this claim, we now show that the analogous result to Theorem~\ref{thm: shellable cubical complexes} holds for the edgewise subdivision of a cubical complex.  
To do so, we will first use the fact that the $r^{th}$ edgewise subdivision of a cube has a geometric realization that is a unimodular triangulation of the $r^{th}$ dilation of $[0,1]^d$ so as to give a formula for the $h$-polynomials of the stable relative complexes $\RR_i$ associated to the edgewise subdivision of a cubical complex.  
This formula will be in terms of the colored Eulerian polynomials $A_{d,\ell}^{(r)}$, which were introduced at the beginning of Section~\ref{sec: apps}.
%Recall from definition [add reference in manuscript] that the $d^{th}$ $r$-colored $\ell$-Eulerian polynomial $A_{d,\ell}^{(r)}$ is given by the relation: 
%\[\sum^{t\geq 0}  (rt)^\ell(rt+1)^{d-\ell}x^t = \frac{A_{d,\ell}^{(r)}}{(1-x)^{d+1}}.\]

We first give a combinatorial interpretation of the coefficients of $A_{d,\ell}^{(r)}$ in terms of a descent statistic for the wreath product $\Z_r\wr\mathfrak{S}_d$. % of wreath products of the symmetric group.  
Denote the elements of the wreath product $\Z_r\wr\mathfrak{S}_d$ as pairs $(\pi, \epsilon)$, where $\pi = \pi_1\cdots\pi_d \in \mathfrak{S}_d$ and $\epsilon = (c_1,\ldots,c_d)\in\{0,\ldots,r-1\}^d$.  %$\epsilon \in \{\omega^0, \dots, \omega^{r-1}\}^d$, where $\omega$ is a primitive $r^{th}$ root of unity.  
We will typically denote the pair $(\pi,\epsilon)$ as $\pi_1^{c_1}\cdots\pi_d^{c_d}$.  
%As in \cite{BB13}, we will often use window notation, denoting $(\pi, \epsilon)$ as $[\pi(1)^{c_1}\dots\pi(d)^{c_d}]$, where $c_i$ is the exponent $\omega_i^{c_i}$ in the $i^{th}$ coordinate of $\epsilon$.  
Like the classical symmetric group, $\Z_r\wr\mathfrak{S}_d$ admits combinatorial statistics such as descents and excedances, and there exist several different well-studied versions of each.  
A brief survey of these different definitions, as well as their uses, can be found in \cite{BB13}.  
For our purposes, we use the following definition:
%---DEFINITION: Colored Descents----
\begin{definition}
\cite[Definition 2.5]{BB13}
\label{def: colored descents}
Order the elements in the set $\{i^{c_i} : i\in[d], c_i \in \{0,1,\dots,r-1\}\}$ such that $i^{c_i}<j^{c_j}$ if $c_i>c_j$ or $c_i=c_j$ and $i<j$.  
%(For example, for $\Z_3 \wr S_2$ the total ordering is $1^2<2^2<1^1<2^1<0^0<1^0<2^0$.)  
For $(\pi,\epsilon) \in \Z_r\wr S_d$, the {\em descent set} of $(\pi,\epsilon)$ is
\[
\Des(\pi,\epsilon):= \{j \in \{0,1,\dots,d-1\}: \pi_j^{c_j} >\pi_{j+1}^{c_{j+1}} \},
\]
where we use the convention that $\pi_0 = 0$ and $c_0 = 0$. % \textcolor{red}{(Don't we want $j\in[d+1]$ and the convention $\pi_{d+1} = d+1$ and $c_{d+1} = 0$?)}.  
The {\em descent statistic} is $\des(\pi,\epsilon) := |Des(\pi,\epsilon)|$.  
\end{definition}

As in the case of the signed permutations used in Subsection~\ref{subsec: barycentric cubical}, we can extend this Definition~\ref{def: colored descents} to $\ell$-descents:
%\begin{definition}
Let $(\pi, \epsilon) \in \Z_r\wr\mathfrak{S}_d$ and let $0\leq\ell\leq d$.  
Then the {\em $\ell$-descent set} of $(\pi, \epsilon)$ is 
\[
\Des_\ell(\pi, \epsilon) := 
\begin{cases} 
\Des(\pi,\epsilon)\cup\{0\} &\mbox{if } \pi_1 \in [\ell]\\
\Des(\pi,\epsilon) & \mbox{otherwise.} 
\end{cases}
 \]
The {\em $\ell$-descent statistic} is then $\des_\ell (\pi,\epsilon) := |\Des_\ell(\pi,\epsilon)|$.
%\textcolor{red}{What happens if $0$ is already in the descent set?  For example $2^11^1$? maybe we want to do $\Des(\pi,\epsilon)\cup\{d\}$ instead?}
%\end{definition}
Using this new statistic, we can now give a combinatorial interpretation of $A_{d,\ell}^{(r)}$. 
%---PROPOSITION: Eulerian Combinatorial Interpretation-----
\begin{proposition}
\label{prop: eulerian comb int}
For $d,r\geq1$ and $0\leq \ell\leq d$,
\[
A_{d,\ell}^{(r)} = \sum_{(\pi,\epsilon) \in \Z_r\wr\mathfrak{S}_d} x^{\des_\ell(\pi, \epsilon)}.
\]
Moreover, $A_{d,\ell}^{(r)} = h^\ast([0,r]^d_\ell;x)$.  
%\textcolor{red}{Shouldn't it be $A_{d,\ell}^{(r)} = \sum_{(\pi,\epsilon) \in \Z_r\wr\mathfrak{S}_{d+1}} x^{\des_{\ell+1}(\pi, \epsilon)}$ or some similar degree shift?}
\end{proposition}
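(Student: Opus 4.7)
\textbf{The plan} is to prove the two equalities in turn. The Ehrhart identity $A_{d,\ell}^{(r)}=h^\ast([0,r]^d_\ell;x)$ follows by direct lattice-point enumeration; the combinatorial identity then follows by a colored-permutation extension of Stanley's classical Worpitzky-type argument for the unit cube.

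For the Ehrhart identity, I would interpret $[0,r]^d_\ell$ in analogy with the $[-1,1]^d_\ell$ of Subsection~\ref{subsec: barycentric cubical} as $[0,r]^d$ with $\ell$ of its upper facets, say $\{x_d=r\},\ldots,\{x_{d+1-\ell}=r\}$, removed. A direct count then gives
\[
i([0,r]^d_\ell;t)=(rt)^\ell(rt+1)^{d-\ell},
\]
since the $\ell$ coordinates of the removed facets each range over $\{0,\ldots,rt-1\}$ while the remaining $d-\ell$ each range over $\{0,\ldots,rt\}$. Comparing Ehrhart series against the defining identity~\eqref{eqn: colored} of $A_{d,\ell}^{(r)}$ immediately yields $h^\ast([0,r]^d_\ell;x)=A_{d,\ell}^{(r)}$.

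For the combinatorial identity, it then suffices to prove the Worpitzky-type formula
\[
(rt)^\ell(rt+1)^{d-\ell}=\sum_{(\pi,\epsilon)\in\Z_r\wr\mathfrak{S}_d}\binom{t+d-\des_\ell(\pi,\epsilon)}{d}
\]
for all $t\geq 0$, since multiplying by $x^t$, summing, and applying the standard identity $\sum_{t\geq 0}\binom{t+d-k}{d}x^t=x^k/(1-x)^{d+1}$ gives the claim. I would prove this bijectively by extending the classical Stanley bijection for the unit cube: each lattice point $x\in t[0,r]^d_\ell\cap\Z^d$ decomposes uniquely as $x_i=r\,q_i+c_i$ with $c_i\in\{0,\ldots,r-1\}$ (using a boundary convention at coordinates $x_i=rt$ determined by which upper facets have been removed), yielding a coloring $\epsilon=(c_1,\ldots,c_d)$. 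The permutation $\pi\in\mathfrak{S}_d$ is read off by sorting the indices so that the colored letters $\pi_j^{c_{\pi_j}}$ appear in weakly increasing order of their associated values $x_{\pi_j}$, with ties broken by the order on $\{i^c\}$ in Definition~\ref{def: colored descents}. Grouping lattice points by $(\pi,\epsilon)$ reduces the count to counting weak compositions $0\leq q_{\pi_1}\leq\cdots\leq q_{\pi_d}\leq t$ whose inequalities (including the degenerate one ``$0<q_{\pi_1}$'' at position $0$) are strict precisely at positions in $\Des_\ell(\pi,\epsilon)$, and a standard stars-and-bars computation gives $\binom{t+d-\des_\ell(\pi,\epsilon)}{d}$ such compositions.

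The main obstacle is the careful matching of conventions at the boundary: the decomposition $x_i=r\,q_i+c_i$ at coordinates $x_i=rt$ must be arranged so that the ``$0\in\Des_\ell(\pi,\epsilon)$ when $\pi_1\in[\ell]$'' modification aligns exactly with the removal of the chosen $\ell$ upper facets of $[0,r]^d$. A secondary subtlety is verifying that the nonstandard order on colored letters in Definition~\ref{def: colored descents} (with higher colors preceding lower ones) is compatible with the tie-breaking in the sorting step. Once these conventions are aligned, the rest of the argument is a routine colored generalization of Stanley's generating-function computation for the classical Eulerian polynomial.
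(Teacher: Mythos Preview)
Your approach is essentially the same as the paper's, just phrased enumeratively rather than geometrically. The paper decomposes $[0,r]^d_\ell$ into half-open unimodular simplices $\Delta_{(\pi,\epsilon)}^{d,\ell}$ indexed by $\Z_r\wr\mathfrak{S}_d$ (the standard triangulation of each unit cube in the pile of cubes $\mathcal{P}_d(r,\ldots,r)$), observes that each has $h^\ast$-polynomial $x^{\des_\ell(\pi,\epsilon)}$, and sums; it then computes the Ehrhart polynomial $(rt)^\ell(rt+1)^{d-\ell}$ directly to identify the result with $A_{d,\ell}^{(r)}$. Your Worpitzky-type bijection is precisely the lattice-point shadow of that half-open simplex decomposition: the pair $(\pi,\epsilon)$ you extract from a lattice point records exactly which simplex $\Delta_{(\pi,\epsilon)}^{d,\ell}$ it lies in, and your stars-and-bars count $\binom{t+d-\des_\ell(\pi,\epsilon)}{d}$ is the Ehrhart polynomial of that half-open unimodular simplex.

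One convention mismatch worth noting: the paper removes the \emph{lower} facets $x_1=0,\ldots,x_\ell=0$ from $[0,r]^d$, not upper facets. This is what makes the ``$0\in\Des_\ell(\pi,\epsilon)$ iff $\pi_1\in[\ell]$'' rule emerge naturally, since the strict inequality $0<x_{\pi_1}-z_{\pi_1}$ at the bottom of the chain is forced exactly when the smallest coordinate $\pi_1$ lies in the removed set $[\ell]$ (and $z_{\pi_1}=0$). Your choice of upper facets gives the same $h^\ast$ by the symmetry $x\mapsto r-x$, but the bijection then naturally produces a modification at position $d$ rather than position $0$; you correctly flag this alignment as the main obstacle, and switching to the lower-facet convention removes it entirely.
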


\begin{proof}
We prove this by using the following unimodular triangulation of the cube $[0,r]^d$: 
Subdivide $[0,r]^d$ into the pile of (unit) cubes $\mathcal{P}_d(r,\ldots,r)$, and triangulate each cube in the resulting cubical complex according to the standard triangulation of $[0,1]^d$.
That is, the triangulation of the cube $C_\mathbf{z}$, for $\mathbf{z} = (z_1,\ldots,z_d)\in B^\circ(r,r,\ldots,r)$ is given by triangulating $[0,1]^d$ via the hyperplanes $x_i = x_j$ for $1\leq i<j\leq d$ and then translating this triangulated version of $[0,1]^d$ as $[0,1]^d+\mathbf{z} = C_\mathbf{z}$.  
%Subdivide $[0,r]^d$ into unit cubes of the form $[0,1]^d + \mathbf{z}$ where $\mathbf{z} \in \{0, \dots, r-1\}^d$, and then give each cube the standard unimodular triangulation of the unit cube. 
Each simplex in this triangulation of $C_\mathbf{z}$ is then of the form 
\[
\Delta_{(\pi, \epsilon)}^d := \{x\in\R^d : 0 \leq x_{\pi_1} -z_{\pi_1}\leq \dots\leq x_{\pi_d} - z_{\pi_d} \leq 1\},
\] 
for $(\pi,\epsilon) \in \Z_r\wr S_d$ given by $\pi_1^{z_{\pi(1)}} \dots \pi_d^{z_{\pi(d)}}$.  
This correspondence between elements of $\Z_r\wr S_d$ and facets of this triangulation is similar in spirit to the triangulation defined in \cite[Section 3]{S94}.  
However, we use slightly different conventions. 

%We now use the $\ell$-descent statistic to turn this triangulation into a disjoint union of half-open simplices.  
%These half-open simplices will have the form: $[0,r]^d_{\ell}$.  
We now define the following half-open simplices, with $\pi$, $\mathbf{z}$, and $\epsilon$ defined as above:
\[
\Delta_{(\pi,\epsilon)}^{d,\ell} := \left\{\mathbf{x} \in \R^d : \begin{array}{l}
    0 \leq x_{\pi_1} -z_{\pi_1}\leq \dots\leq x_{\pi_d} - z_{\pi_d} \leq 1,  \\
   x_{\pi_i} - z_{\pi_i} < x_{\pi_{i+1}} - z_{\pi_{i+1}} \mbox{ for } i \in \Des(\pi,\epsilon) \mbox{ , and} \\
   0 < x_{\pi_1} - z_{\pi_1} \mbox{ if } 0 \in \Des_\ell(\pi, \epsilon)
  \end{array}\right\}.
\]
  
  Note that $\Delta_{(\pi,\epsilon)}^{d,\ell}$ is a unimodular half-open simplex with the number of missing facets equal to $\des_\ell(\pi, \epsilon)$.  
  Hence, $h^\ast(\Delta_{(\pi,\epsilon)}^{d,\ell}; x) = x^{\des_\ell(\pi,\epsilon)}$.  
  (A proof of this fact is an exercise analogous to the proof of Lemma~\ref{lem: h part open simp}.)
  We now show that for fixed $d,r\geq 1$ and $0\leq \ell\leq d$, the disjoint union
  \[
  \bigsqcup_{(\pi,\epsilon) \in \Z_r \wr \mathfrak{S}_d}\Delta_{(\pi, \epsilon)}^{d,\ell}
  \]
  is $[0,r]_\ell^d$.  
  To do so, we first decompose $[0,r]_\ell^d$ into a disjoint union of half-open unit cubes. 
  As before, let $\mathbf{z} \in B^\circ(r,\ldots,r)$, and set
  
  \[C_\mathbf{z}^\ell= C_\mathbf{z}\setminus\{x_i = z_i : z_i \neq 0 \mbox{ or } z_i = 0 \mbox{ and } i \in [\ell]\}.\]
  
  %\[
 % \square_\ell^\mathbf{z} := \{[0,1]^r+ \mathbf{0}\}\setminus\{x_i = 0: z_i = 0 \mbox{ or } i \in [\ell]\}.
 % \]
  It follows, as in Example~\ref{ex: piles of cubes}, that $[0,r]^d_\ell$ is the disjoint union of the half-open cubes $C_\mathbf{z} ^\ell$ for $\mathbf{z}\in B^\circ(r,\ldots,r)$.  
%  When $\mathbf{z}$ ranges over all elements of $\{0, \dots, r-1\}^d$, the disjoint union of $\square_\ell^\mathbf{z}$ is $[0,r]^d_\ell$.  
  %\textcolor{red}{(Something is off here... I tried drawing this for $d=2$ and I got a strange picture.... I think maybe this should be the following...)
  %\[
   % \square_\ell^\mathbf{z} := C_\mathbf{z}\setminus\{x_i = z_i : z_i \neq 0 \mbox{ or } z_i = 0 \mbox{ and } i \in [\ell]\}.
 % \]
 % }
  We now show that for a fixed $\mathbf{z}$, the half-open cube $C_\mathbf{z}^\ell$ is the disjoint union of $\Delta_{(\pi,\epsilon)}^{d,\ell}$ where $\pi$ ranges over all elements of $\mathfrak{S}_d$ and $(\pi,\epsilon)= \pi_1^{z_{\pi_1}}\cdots\pi_d^{z_{\pi_d}}$. % is described by the window notation $[\pi(1)^{z_{\pi(1)}} \dots \pi(d)^{z_{\pi(d)}}]$.  
  First, we observe that the closed cube $C_\mathbf{z}$ is the disjoint union of half-open simplices of the form 
  \[
  \left\{\mathbf{x} \in \R^d : 
  \begin{array}{l}
    0 \leq x_{\pi_1} -z_{\pi_1}\leq \dots\leq x_{\pi_d} - z_{\pi_d} \leq 1,  \\
   x_{\pi_i} - z_{\pi_i} < x_{\pi_{i+1}} - z_{\pi_{i+1}} \mbox{ for } i \in \Des(\pi,\epsilon) \cap  \{1,\dots,d-1\}
   \end{array} 
   \right\},
   \]
   since this is simply a translated version of the standard half-open decomposition of the unit cube described, for example, in \cite{BS18}.  
   From this, we can construct a half-open decomposition of $[0,r]^d_\ell$ by removing all of the points in the simplices described above that satisfy $  x_{\pi_1} = z_{\pi_1} \mbox{ for } 0 \in \Des_\ell(\pi, \epsilon)$.  
  % \textcolor{red}{Is it okay that there are two different cases in which $0\in\Des_\ell(\pi,\epsilon)$?  There is the case that $c_1>0$ and there is the case that $\ell > 0$.  Do we have problems because of this?}
   Removing these points gives us the half-open simplex $\Delta_{(\pi,\epsilon)}^{d,\ell}$. 
   Hence, $C_\mathbf{z}^\ell$ is a disjoint union of $\Delta_{(\pi,\epsilon)}^{d,\ell}$ where $(\pi,\epsilon) = \pi_1^{z_{\pi_1}}\cdots\pi_d^{z_{\pi_d}}$. 
   Since $[0,r]^d_\ell$ is the disjoint union of all $C_\mathbf{z}^\ell$ for $z\in B^\circ(r,\ldots,r)$, it follows that $[0,r]^d_\ell$ is the disjoint union of $\Delta_{(\pi,\epsilon)}^{d,\ell}$ over all $(\pi,\epsilon)\in\Z_r\wr\mathfrak{S}_d$.
%   Combining our decomposition of each half-open unit cube $[0,1]^d+\mathbf{z}$ into with our decomposition of $[0,r]_d^\ell$ 
   
   We now compute the $h^*$-polynomial of $[0,r]_\ell^d$ in two different ways.  
   First, since $[0,r]^d_\ell$ is the disjoint union of $\Delta_{(\pi,\epsilon)}^{d,\ell}$ over all $(\pi,\epsilon)\in\Z_r\wr\mathfrak{S}_d$, it follows that
   \[
   \Ehr([0,r]_\ell^d;x) = \sum_{(\pi,\epsilon) \in \Z_r \wr S_d} h^*(\Delta_{(\pi, \epsilon)}^{d,\ell};x) = \sum_{(\pi,\epsilon) \in \Z_r \wr S_d} x^{\des_{\ell}(\pi,\epsilon)},
   \]
   where the last equality follows from the fact that the $h^*$-polynomial of a unimodular simplex with $m$ facets removed is $x^m$.  
   On the other hand, since $\ell \leq d$, we know that $[0,r]^d_\ell$ is the product of $\ell$ copies of the half-open $1$-dimensional cube $[0,r)$ and $d-\ell$ copies of the $1$-dimensional cube $[0,r]$.  
   Since Ehrhart polynomials are multiplicative, 
   %(see, for example, \textcolor{red}{\cite{XXX}}) 
   
   it follows that 
\[
i([0,r]^d_\ell;t) = i([0,r);t)^\ell i([0,r];t)^{d-\ell}) = (rt)^\ell(rt+1)^{d-\ell}. 
\]
Thus,
\[
\Ehr_{[0,r]^d_\ell}(x) = \sum_{t\geq 0}  (rt)^\ell(rt+1)^{d-\ell}x^t  = \frac{h^*([0,r]^d_\ell;x)}{(1-x)^{d+1}}.\]
From the definition of $r$-colored $\ell$-Eulerian polynomials given in Equation~\eqref{eqn: colored}, we see that $h^*([0,r]^d_\ell;x) =A_{d,\ell}^{(r)}.$  
Thus, $A_{d,\ell}^{(r)} = \sum_{(\pi,\epsilon) \in \Z_r \wr S_d} x^{\des(\pi,\epsilon)}$, as desired. 
\end{proof}

\begin{corollary}
\label{cor: degree eulerian}
 For all $d\geq 1$, if $r = 1$ and $0\leq \ell < d$, then the degree of $A_{d,\ell}^{(r)}$ is $d-1$.  Otherwise, the degree of $A_{d,\ell}^{(r)}$ is $d$.  %for all $d,r\geq 1$ and $0\leq \ell\leq d$, with the exception of $r = 1$ and $\ell < d$ for $d\geq1$, in which case the degree is $d-1$?}  
\end{corollary}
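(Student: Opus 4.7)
The plan is to read off the degree from Proposition~\ref{prop: eulerian comb int}, which gives $A_{d,\ell}^{(r)} = \sum_{(\pi,\epsilon)\in\Z_r\wr\mathfrak{S}_d} x^{\des_\ell(\pi,\epsilon)}$. Since $\Des_\ell(\pi,\epsilon)$ is by construction a subset of $\{0,1,\ldots,d-1\}$, the degree is bounded above by $d$, and the problem reduces to computing the exact maximum of $\des_\ell$ over $\Z_r\wr\mathfrak{S}_d$ in each of the regimes stated.

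For $r\geq 2$, I would exhibit a single element realizing $\des_\ell = d$ regardless of $\ell$. Take $\pi=(d,d-1,\ldots,1)$ and $c_i = 1$ for every $i\in[d]$. Each internal step $j\in\{1,\ldots,d-1\}$ is a descent, because $c_{j}=c_{j+1}$ and $\pi_j>\pi_{j+1}$, so $\pi_j^{c_j}>\pi_{j+1}^{c_{j+1}}$ under the ordering of Definition~\ref{def: colored descents}. Moreover, $0\in\Des$ since $c_1=1>0=c_0$ forces $\pi_1^{c_1}<\pi_0^{c_0}=0^0$. Hence $\Des=\{0,1,\ldots,d-1\}$, so $\Des_\ell$ equals this whole set regardless of the value of $\ell$, proving the degree is $d$.

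For $r=1$, all colors vanish, so $\Z_1\wr\mathfrak{S}_d\cong\mathfrak{S}_d$ and the comparison $\pi_0^0 = 0<\pi_1=\pi_1^0$ is automatic, meaning $0\notin\Des(\pi)$ and therefore $\Des(\pi)\subseteq\{1,\ldots,d-1\}$. When $\ell=d$, one always has $\pi_1\in[\ell]$, so the decreasing permutation gives $\Des_\ell=\{0,1,\ldots,d-1\}$ and the degree is $d$. For $\ell<d$ I would split on whether $\pi_1\in[\ell]$. If $\pi_1\notin[\ell]$, then $\Des_\ell=\Des$ has at most $d-1$ elements. If $\pi_1\in[\ell]$, then $\pi_1\leq\ell<d$ means $\pi$ is not the reverse permutation, and the classical fact that only the reverse permutation achieves $d-1$ descents in $\mathfrak{S}_d$ gives $|\Des|\leq d-2$, so $|\Des_\ell|\leq d-1$ as well. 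The upper bound $d-1$ is sharp, witnessed by the decreasing permutation itself (for which $\pi_1=d\notin[\ell]$).

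The only genuinely non-routine step is the last case analysis, balancing the extra descent at position $0$ (which is gated by the $\ell$-threshold $\pi_1\in[\ell]$) against the classical maximum-descent bound on $\mathfrak{S}_d$. Everything else is a direct construction from the combinatorial model, so the main obstacle is simply to articulate this trade-off cleanly.
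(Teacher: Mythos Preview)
Your proof is correct and follows essentially the same approach as the paper's own argument: both use the combinatorial interpretation of Proposition~\ref{prop: eulerian comb int}, exhibit the element $d^1(d-1)^1\cdots 1^1$ to witness degree $d$ when $r\geq 2$, and for $r=1$ argue that achieving $\des_\ell=d$ forces the decreasing permutation together with $\pi_1\leq\ell$, which is only compatible with $\ell=d$. Your case split on whether $\pi_1\in[\ell]$ makes the upper bound $d-1$ for $r=1$, $\ell<d$ a bit more explicit than the paper's version, but the content is the same.
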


\begin{proof}
By Proposition~\ref{prop: eulerian comb int}, the degree of  $A_{d,\ell}^{(r)}$ is the maximum number of $\ell$-descents of $(\pi, \epsilon) \in \Z_r \wr S_d$.  Since $\Des_\ell(\pi,\epsilon)$ is a subset of $\{0, \dots, d-1\}$, the degree is at most $d$.  
%This maximum is achieved by, for example, the element \textcolor{red}{$d^1(d-1)^1\cdots1^1$.} % $[d^1, (d-1)^1, \dots, 1^1]$.  
 When $r = 1$, $0\in\Des_\ell(\pi,\epsilon)$ if and only if $\ell >0$ and $\pi_1\leq \ell$.  If $0\in\Des_\ell(\pi,\epsilon)$ and $r = 1$, then $i\in\Des_\ell(\pi,\epsilon)$ for all $i\in[d-1]$ if and only if $\pi_i > \pi_{i+1}$ for all $i\in[d-1]$.  However, this is only possible if $\pi = d(d-1)\cdots 1$.  Hence, $A_{d,\ell}^{(1)}$ has degree $d$ if and only if $\ell = d$.  Otherwise, $A_{d,\ell}^{(1)}$ has degree $d-1$, since $(\pi,\varepsilon) = \pi_1^0\pi_2^0\cdots\pi_d^0$ has exactly $d-1$ descents.  
If $r>1$ then $A_{d,\ell}^{(r)}$ has degree $d$ since the element $(\pi,\varepsilon) = d^1(d-1)^1\cdots1^1$ has $d$ descents.
%Note that $\des_\ell(\pi,\epsilon) = d$ if and only if each of the positions $0,1,\ldots,d-1$ is a descent, including $0$.  When $r = 1$, this is only possible if $\ell > 0$, as $c_1 = 0$ for all $(\pi,\epsilon) \in\Z_1\wr\mathfrak{S}_d$.  However, for $r =1$, $0\in\des_\ell(\pi,\epsilon)$ if and only $\pi_1 \leq \ell$.  If $\pi_1<d$, then there there must be a position in $\pi$ that is not a descent.  However, if $\ell = d$ and $\pi_1 = d$ then $\des_\ell(\pi,\epsilon) = d$. }
\end{proof}

%Observe that when $r=0$, we get the standard triangulation of the unit cube.  When $r>1$, we get the following triangulation: Subdivide $[0,r]^d$ into unit cubes of the form $[0,1]^d + (z_1, \dots, z_d)$ where $z_i \in \{0, \dots, r-1\}$, and then give each cube the standard unimodular triangulation of the unit cube.  This is triangulation mentioned in section 3 of \cite{S94}.  To get the $r^{th}$ edgewise subdivision of the unit cube, scale by $\frac{1}{r}$.  To get the $r^{th}$ edgewise subdivision of a cubical complex, we piece together the $r^{th}$ edgewise subdivison of each facet.  

Using these facts, we can apply the results on stable shellings developed in Section~\ref{sec: shellable complexes} to prove the following:
%---THEOREM: Cubical Complexes Edgewise Real-rootedness-----
\begin{theorem}
\label{thm: cubical edgewise real}
Let $\CC$ be a cubical complex with a stable shelling. 
%Let $\mathcal{C}$ be the boundary complex of a $(d+1)$-dimensional cubical polytope with a stable shelling and let $\mathcal{C}^{\langle r \rangle}$ denote its edgewise subdivision.  
Then $h(\mathcal{C}^{\langle r \rangle};x)$ is real rooted for $r\geq2$.
\end{theorem}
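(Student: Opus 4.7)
The plan is to follow the template of Theorem~\ref{thm: shellable cubical complexes}, substituting the $r$-th edgewise subdivision for the barycentric subdivision and the $r$-colored $\ell$-Eulerian polynomials $A_{d,\ell}^{(r)}$ for the Type B Eulerian polynomials $A_{d,\ell}^{(2)}$. Given a stable shelling $(F_1,\ldots,F_s)$ of the $d$-dimensional cubical complex $\CC$, Lemma~\ref{lem: opposing pairs} identifies each associated relative complex $\RR_i$ with a half-open cube $[0,1]^d_\ell$ for some $0 \leq \ell \leq 2d$. Hence the relative complex associated to $F_i$ in $\CC^{\langle r\rangle}$ is $([0,1]^d_\ell)^{\langle r\rangle}$, and by Theorem~\ref{thm: subdivisions of shellable complexes} it suffices to arrange the $h$-polynomials $(h(([0,1]^d_\ell)^{\langle r\rangle};x))_{\ell=0}^{2d}$ into an interlacing sequence.

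The second step is to identify each of these $h$-polynomials with a colored $\ell$-Eulerian polynomial. In analogy to Lemma~\ref{lem: geometric realization}, the $r$-th edgewise subdivision of $[0,1]^d$ admits a geometric realization as a unimodular triangulation of the dilated cube $[0,r]^d$ (indeed, $[0,r]^d$ is cut into unit cubes, each triangulated by the hyperplanes $x_i = x_j$). Using Lemmas~\ref{lem: whole polytope} and~\ref{lem: not whole polytope}, together with Proposition~\ref{prop: eulerian comb int}, one obtains
\[
h(([0,1]^d_\ell)^{\langle r\rangle};x) = h^\ast([0,r]^d_\ell;x) = A_{d,\ell}^{(r)}
\]
for $0 \leq \ell \leq d$; for $d < \ell \leq 2d$, applying Lemma~\ref{lem: half-open reciprocity} (using that $i([0,r]^d_\ell;t) = (rt)^\ell(rt+1)^{d-\ell}$ is a polynomial in $t$) gives
\[
h(([0,1]^d_\ell)^{\langle r\rangle};x) = x\I_d A_{d,2d-\ell}^{(r)},
\]
with the case $\ell = 2d$ (the interior) handled separately via classical Ehrhart-Macdonald reciprocity.

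Finally, I would verify that the augmented sequence
\[
\left(A_{d,0}^{(r)}, A_{d,1}^{(r)}, \ldots, A_{d,d}^{(r)}, x\I_d A_{d,d}^{(r)}, x\I_d A_{d,d-1}^{(r)}, \ldots, x\I_d A_{d,0}^{(r)}\right)
\]
is interlacing; interlacing of our actual sequence then follows since it is a subsequence. By \cite[Lemma 2.3]{B06}, this reduces to four conditions: (i) $A_{d,0}^{(r)} \prec x\I_d A_{d,0}^{(r)}$; (ii) $A_{d,\ell}^{(r)} \prec A_{d,k}^{(r)}$ for $0 \leq \ell < k \leq d$; (iii) $A_{d,d}^{(r)} \prec x\I_d A_{d,d}^{(r)}$; and (iv) $x\I_d A_{d,k}^{(r)} \prec x\I_d A_{d,\ell}^{(r)}$ for $0 \leq \ell < k \leq d$. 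Condition (ii) is immediate from Lemma~\ref{lem: colored eulerian}, and condition (iv) follows from (ii) by reversing coefficients and applying Lemma~\ref{lem: basic facts}. The main obstacle lies in conditions (i) and (iii), both of which (via Lemma~\ref{lem: basic facts}\eqref{eqn: times x}) reduce to establishing $\I_d A_{d,\ell}^{(r)} \prec A_{d,\ell}^{(r)}$ for $\ell \in \{0,d\}$. For $r = 2$, these follow from the symmetry of $A_{d,0}^{(2)}$ and from \cite[Theorem 3.1]{BS19}. For general $r \geq 2$, I would prove them as $r$-colored analogs: recognizing via Proposition~\ref{prop: eulerian comb int} that $A_{d,\ell}^{(r)}$ is the $h^\ast$-polynomial of the half-open cube $[0,r]^d_\ell$, and then applying Lemma~\ref{lem: half-open reciprocity} to relate $A_{d,\ell}^{(r)}$ to the $h^\ast$-polynomial of its complementary reciprocal domain, from which the required interlacing should follow by a symmetric decomposition argument analogous to \cite{BS19}.
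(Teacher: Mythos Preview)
Your proposal is correct and follows essentially the same route as the paper: decompose via the stable shelling, identify the $h$-polynomials of the relative pieces with $A_{d,\ell}^{(r)}$ or $x\I_d A_{d,\ell}^{(r)}$ using Proposition~\ref{prop: eulerian comb int} and reciprocity, and then verify that the concatenated sequence is interlacing. The only point of divergence is your treatment of conditions (i) and (iii): you flag them as requiring new work for general $r\geq 2$, proposing a symmetric-decomposition argument ``analogous to \cite{BS19}.'' In fact \cite[Theorem~3.1]{BS19} already establishes $\I_d A_{d,\ell}^{(r)}\prec A_{d,\ell}^{(r)}$ for the colored Eulerian polynomials with arbitrary $r$, not just $r=2$; this is why the paper can simply say the justification is identical to that of Theorem~\ref{thm: shellable cubical complexes}. (You are right, though, that the symmetry argument used for case~(1) when $r=2$ does not literally carry over, since $A_{d,0}^{(r)}$ is not symmetric for $r\geq 3$; one must appeal to \cite{BS19} there as well.) One small omission: when passing from $\I_{d+1}$ to $x\I_d$ you implicitly use that $A_{d,\ell}^{(r)}$ has degree exactly $d$ for $r\geq 2$, which is Corollary~\ref{cor: degree eulerian}.
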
 

\begin{proof}
Using the  shelling argument detailed in the proof of Theorem \ref{thm: shellable cubical complexes}, we see that we can decompose $\mathcal{C}^{\langle r \rangle}$ into relative complexes $\mathcal R_j$ in which each $\mathcal R_j$ has geometric realization the unimodular triangulation $T_{d,r}$ of $[0,r]^d_\ell$, induced by the $r^{th}$ edgewise subdivision as described above, for some $0\leq\ell\leq 2d$.   
We first consider the cases in which $0\leq \ell \leq d$.  
In this case, $[0,r]^d_\ell$ is either a closed, convex polytope and hence has Euler characteristic $1$, or it is missing a subset of facets that forms a contractible subcomplex, and hence it has Euler characteristc $0$.  
So by Lemmas \ref{lem: whole polytope} and \ref{lem: not whole polytope},
\[
h(\mathcal{R}_j;x) = h^*([0,r]^d_l;x)=A_{d,\ell}^{(r)}.
\]
where the last equality follows from Proposition~\ref{prop: eulerian comb int}.  

We now must consider the case in which more than $d$ facets are removed.  From Lemma \ref{lem: half-open reciprocity}, we know that for $0\leq\ell\leq d$:
\[
h^*([0,r]^d_{2d -\ell};x ) = \mathcal{I}_{d+1}h^*([0,r]^d_\ell;x)  = x\mathcal{I}_d A_{d,\ell}^{(r)},
\]
where the last equality follows from Corollary \ref{cor: degree eulerian} stating that the degree of $A_{d,\ell}^{(r)}$ is $d$.  
Thus, we see that $h(\mathcal{R}_j;x)$ is either $A_{d,\ell}^{(r)}$ or $x\mathcal{I}_d A_{d,\ell}^{(r)}$ for some $0\leq\ell\leq d$. 
Thus it suffices to show that for a fixed $r\geq2$ and $d\geq1$ the concatenated sequence 
\[
\left((A_{d,\ell}^{(r)})_{\ell = 0}^d, (x\mathcal{I}_d A_{d,\ell}^{(r)})_{\ell = d}^0\right)
\] 
is interlacing.  
The justification is identical to the one found in the proof of Theorem \ref{thm: shellable cubical complexes}.
%\textcolor{red}{Note that this argument fails if $r=1$.  I don't know of a way to fix this... do you?}
\end{proof}

In the proofs of Corollary~\ref{cor: cuboids}, Corollary~\ref{cor: capped real-rootedness}, and Corollary~\ref{cor: neighborly}, it was argued that the boundary complexes of cuboids, capped cubical polytopes, and neighborly cubical polytopes all admit stable shellings.  
Hence, as a corollary to Theorem~\ref{thm: cubical edgewise real}, we recover that the $r^{th}$ edgewise subdivision of the boundary complex of all well-known examples of cubical polytopes have real-rooted $h$-polynomials. 
%---COROLLARY: Cubical Examples Edgewise-----
\begin{corollary}
\label{cor: cubical examples edgewise}
If $\CC$ is the boundary complex of a cuboid, a capped cubical polytope, or a neighborly cubical polytope, then $h(\CC^{\langle r\rangle};x)$ is real-rooted for all $r\geq 2$.  
\end{corollary}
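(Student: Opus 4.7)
The proof is essentially a direct citation of work already done earlier in the paper, together with the geometric result just established. The plan is to invoke Theorem~\ref{thm: cubical edgewise real}, which says that any cubical complex admitting a stable shelling has a real-rooted $h$-polynomial under edgewise subdivision for $r \geq 2$. This reduces the statement entirely to the following claim: the boundary complex of each of the three families (cuboids, capped cubical polytopes, neighborly cubical polytopes) admits a stable shelling.

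For cuboids, the proof of Corollary~\ref{cor: cuboids} already produces an explicit stable shelling: starting from the stable shelling $(F_1,\ldots,F_d,F_{d+1},\ldots,F_{2d})$ of $\CC(\partial\square_d)$ from Example~\ref{ex: d-cube}, one concatenates the lexicographic pile-of-cubes shelling within each subdivided facet of $[0,2]^d$ and verifies via Lemma~\ref{lem: opposing pairs} that each associated relative complex is stable. For capped cubical polytopes, the proof of Corollary~\ref{cor: capped real-rootedness} inductively builds a stable shelling: if the base polytope $Q$ has stable shelling $(F_1,\ldots,F_M)$ and $P = \capped(Q,F)$ with $F = F_k$, then replacing $F_k$ by a stable shelling of the remaining facets of the Schlegel cap (using the $(G_2,\ldots,G_{2d})$ tail of the boundary shelling of the cube) yields a stable shelling of the boundary of $P$. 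For neighborly cubical polytopes, I would cite the Joswig--Ziegler construction \cite{JZ00}: the explicit realizations $C_n^d$ are capped cubical polytopes, so the previous case applies.

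Therefore, in each of the three cases, the boundary complex admits a stable shelling, and Theorem~\ref{thm: cubical edgewise real} gives the conclusion for every $r \geq 2$. No further work is needed; the main obstacle (and indeed the content of the corollary) has already been handled in the constructive stable-shelling arguments of Section~\ref{subsec: barycentric cubical}, which are independent of the choice of uniform subdivision. This independence from the subdivision is precisely the point of the stable shelling framework, and this corollary is the natural payoff: one stable shelling simultaneously establishes real-rootedness for both the barycentric subdivisions (Corollaries~\ref{cor: cuboids}, \ref{cor: capped real-rootedness}, \ref{cor: neighborly}) and the edgewise subdivisions.
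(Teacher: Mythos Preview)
Your proposal is correct and matches the paper's approach exactly: the paper simply observes that the proofs of Corollaries~\ref{cor: cuboids}, \ref{cor: capped real-rootedness}, and \ref{cor: neighborly} already constructed stable shellings for these boundary complexes, and then applies Theorem~\ref{thm: cubical edgewise real}. Your summary of how those stable shellings were built is accurate and a bit more explicit than the paper's one-sentence citation, but the logic is identical.
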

The results of Corollary~\ref{cor: cubical examples edgewise}, Corollary~\ref{cor: cuboids}, Corollary~\ref{cor: capped real-rootedness}, and Corollary~\ref{cor: neighborly} collectively show how the same stable shelling allows one to deduce the real-rootedness of $h$-polynomials for a variety of different uniform subdivisions.

%---SECTION: Stable Line Shellings----
\section{Stable Line Shellings}
\label{sec: stable line shellings}
One of our applications of stable shellings in Section~\ref{sec: apps} was that any cubical complex admitting such a shelling has a barycentric subdivision with a real-rooted $h$-polynomial (Theorem~\ref{thm: shellable cubical complexes}).
This gave a positive answer to Problem~\ref{prob: subdivisions of polytopes} for all well-known constructions of cubical polytopes. 
A classic result of Bruggesser and Mani \cite{BM72} states that, in fact, the boundary complex of any polytope admits a shelling. 
Hence, the result of Theorem~\ref{thm: shellable cubical complexes} suggests the following general question:
%---QUESTION: Stable Shellings of Polytopes---
\begin{question}
\label{quest: stable shellings of polytopes}
Does every polytope (cubical or otherwise) admit a stable shelling?
\end{question}

The result of Bruggesser and Mani \cite{BM72} is, in fact, stronger than stated since they further demonstrate that the boundary complex of any polytope admits a special type of shelling known as a line shelling. 
Suppose that $\CC$ is the boundary complex of a $d$-dimensional polytope $P$ with $s$ facets and $m$ vertices.  
The \emph{realization space} of $\CC$, denoted $R_\CC$, is the space of all geometric realizations $\Sigma\subset\R^d$ of $\CC$ as the boundary complex of a convex polytope in $\R^d$.  
Hence, each realization $\Sigma\in R_\CC$ is the boundary complex of a convex polytope in $\R^d$, which we will denote by $Q_\Sigma$.  
Note that $R_\CC$ can be thought of as a semialgebraic set living in $\R^{d\times m}$ where each realization $\Sigma$ corresponds to a $d\times m$ matrix whose columns are the realizations of the vertices of $\CC$. 
Since $\CC$ is $(d-1)$-dimensional, then for all $\Sigma\in R_\CC$, the geometric realization $\sigma_i\in\Sigma$ of a given facet $F_i$ of $\CC$ spans an affine hyperplane $H_i\subset\R^d$.  
Let $\A_\Sigma := \{H_i : i\in[s]\}$ denote the corresponding hyperplane arrangement.  
%The theory of hyperplane arrangements is well-developed in the combinatorial literature (see for instance \cite{XXX}).  
%In the following, we will see how the notion of a stable shelling, and the proof of the existence of such shellings, relates this theory. 
Fix $\Sigma\in R_\CC$, and let $\ell\subset\R^d$ be a line that intersects each hyperplane $H_i\in\A_\Sigma$ at a point $q_i:=\ell\cap H_i$.  
Assume that $q_1,\ldots,q_s$ are all distinct and that $\ell$ intersects the interior of the convex polytope $Q_\Sigma$.  
Without loss of generality, the point $q_1$ then lies in the interior of some facet $\sigma_1$ of $\Sigma$.  
Consider $\A_\Sigma$ and $\ell$ in the one-point compactification of $\R^d$, denoted $\R^d\cup\{\infty\}$.  
By fixing an orientation of $\ell$ and following this orientation outwards from the initial point $q_1$, we obtain a linear ordering of the points of intersection of $\ell$ with $\A_\Sigma$ and the point $\infty$:
\[
(q_1,\ldots,q_t,\infty,q_{t+1},\ldots,q_s).
\]
If the corresponding linear order $(F_1,\ldots,F_t,F_{t+1},\ldots,F_s)$ of the facets of $\CC$ is a shelling, we call it a \emph{line shelling} of $\CC$ (induced by $\A_\Sigma$ and $\ell$).
A well-known fact about line shellings is the following (see, for instance, \cite{Z12} or \cite{BM72}): 
If $i\leq t$, then the set of facets of $F_i$ that are not included in the relative complex $\RR_i$ associated to $F_i$ by $(F_1,\ldots,F_s)$ is the collection of facets $F_i$ realized by facets of $\sigma_i$ that are {\em visible} from the point $q_i$; that is, the set of all facets containing a point $q$ that is visible from $q_i$ in $H_i$ (as defined in Subsection~\ref{subsec: ehrhart theory}).  
On the other hand, if $i>t$, then the set of facets of $F_i$ that are not included in $\RR_i$ are those facets realized by facets of $\sigma_i$ that are not visible from $q_i$ in $H_i$.  
These are callled the {\em covisible facets} of $F_i$.  

%In the following, we let $\vis(q_i)$ denote the set of facet-defining hyperplanes of $\sigma_i$ that define facets visible from $q_i$.  
%We also let $\covis(q_i)$ denote the set of facet-defining hyperplanes of all {\em covisible} facets from $q_i$; that is, all facets of $\sigma_i$ that are not visible from $q_i$.  
%If $t>i$, then the set of facets of $\sigma_i$ that are not included in $\RR_i$ is given by the hyperplanes in $\covis(q_i)$.  

Line shellings are a well-studied tool in geometric and algebraic combinatorics (see for instance \cite{Z12}).  
The result of Bruggesser and Mani \cite{BM72} states that the boundary complex of any convex polytope admits a line shelling.
Given this result, a positive answer to Question~\ref{quest: stable shellings of polytopes} that gives a stable line shelling for the boundary complex of every polytope would yield an answer to Problem~\ref{prob: subdivisions of polytopes} in the case of all cubical polytopes (via Theorem~\ref{thm: shellable cubical complexes}).  
Hence, it would be of general interest to better understand the geometry of stable line shellings.  
Namely, given the boundary complex $\CC$ of a $d$-dimensional polytope $P$, we let $\mathcal{L}_\CC$ denote the collection of all pairs $(\Sigma,\ell)$ for which $\Sigma\in R_\CC$ and $\ell\in\R^d$ induce a line shelling of $\CC$.  
We further let $\mathcal{L}_\CC^s\subseteq \mathcal{L}_\CC$ denote the space of all pairs that induce stable line shellings of $\CC$.  
%---PROBLEM: Stable Line Shellings----
\begin{problem}
\label{prob: stable line shellings}
Let $\CC$ be the boundary complex of a $d$-dimensional polytope $P$.  
Describe the space of stable line shellings $\mathcal{L}_\CC^s$.  
%In particular, when $P$ is cubical, is $\mathcal{L}_\CC^s$ nonempty?
\end{problem}
The geometry of $\mathcal{L}_\CC^s$ is tied to the geometry of realization spaces and hyperplane arrangements, both of which have a long history in algebraic and geometric combinatorics.  
A description of the space $\mathcal{L}_\CC^s$ would likely bring the study of such ideas closer to the theory of interlacing polynomials, and in doing so, could lead to a full answer to Problem~\ref{prob: subdivisions of polytopes} in the case of cubical polytopes.  
To do so, we need to see that the space $\mathcal{L}_\CC^s$ is nonempty whenever $\CC$ is the boundary complex of a cubical polytope.  
This is true in the simplest case; i.e., when $\CC$ is the boundary of the $d$-cube.  
%---EXAMPLE: d-Cube---
\begin{example}[The boundary of the $d$-cube]
\label{ex: d-cube line}
Consider the geometric realization of the boundary of the $d$-dimensional cube as the boundary of $[0,1]^d\subset\R^d$.  
Let $\ell\subset\R^d$ be a general line passing through the interior of $[0,1]^d$.  
It follows that $\ell$ intersects each facet-defining hyperplane of $[0,1]^d$ at a distinct point, and ordering these points with respect to an orientation of $\ell$ yields a line shelling $(F_1,\ldots,F_{2d})$ of the facets of $[0,1]^d$. 
Let $H_i$ denote the facet-defining hyperplane of $[0,1]^{d}$ containing the facet $F_i$ for all $i\in[2d]$, and consider the point $q_i$ lying in the hyperplane $H_i$ defining the facet $F_i$ for some $i\in[2d]$.  
Without loss of generality, this hyperplane is of the form $x_j = 0$ for some $j\in[d]$.  
By identifying the affine subspace of $\R^d$ defined by the hyperplane $x_j = 0$ with $\R^{d-1}$, we see that the induced arrangement $\A_i := \{H_i\cap H_k : k\in[2d]\setminus\{i\}\}$ lying in the hyperplane $H_i$ is, up to reindexing, the hyperplane arrangement $\A_{[0,1]^{d-1}}$.  
Since $q_i$ does not lie in any facet-defining hyperplane for a facet $F_k$ of $[0,1]^d$ for $k\neq i$, it follows that $q_i$ lies in an open region of $\R^{d-1}\setminus\{x\in\R^{d-1}: x\in H \mbox{ for some $H\in\A_{[0,1]^{d-1}}$}\}$.% $\A_{[0,1]^{d-1}}$. 
Since the arrangement $\A_{[0,1]^{d-1}}$ consists of the hyperplanes $x_j = 0$ and $x_j = 1$ for all $j\in[d-1]$, it follows that this region is either the interior of $[0,1]^{d-1}$ or it is all $x\in\R^{d-1}$ satisfying $x_k < 0$ and $x_{k^\prime} > 1$ for the facet-defining hyperplanes $x_k = 0$ and $x_{k^\prime} =1$ of the tangent cone $T_F([0,1]^{d-1})$ of some face $F$ of $[0,1]^{d-1}$.  
In the former case the set of visible facets of $F_i$ from $q_i$ is empty and the set of covisible facets is the complete set of facets of $F_i$. 
In the latter case, the set of visible facets of $F_i$ from $q_i$ is the set of all facets containing the face $F$, and the set of covisible facets is their complement.  
In either case, the relative complex $\RR_i$ is stable, and hence, so is the line shelling $(F_1,\ldots, F_s)$. 
\end{example}

When attempting to generalize Example~\ref{ex: d-cube line}, we see that a stable line shelling of the boundary complex $\CC$ of a $d$-dimensional polytope $P$ should be a line shelling induced by a hyperplane arrangement $\A_\Sigma$, for $\Sigma\in R_\CC$, and a line $\ell\in\R^d$ for which the points $q_1,\ldots,q_s$ are all `sufficiently close' to the complex $\Sigma$ in the following sense:   
Note that the hyperplane arrangement $\A_\Sigma$ naturally subdivides $\R^d$ into a collection of disjoint connected components.  
A \emph{region} of the arrangement $\A_\Sigma$ is a connected component of the complement of the hyperplanes in $\A_\Sigma$:
\[
\R^d\setminus\bigcup_{H\in\A_\Sigma}H.
\]
We let $\mathfrak{R}(H_\Sigma)$ denote the collection of all regions of the $\HH_\Sigma$.  
Similarly, given any subset $Y\subset[s]$, we can consider the hyperplane arrangement $\A_\Sigma^Y$ living in the real-Euclidean space $\bigcap_{i\in Y}H_i \subset\R^d$ given by
\[
\A_\Sigma^Y :=
\{
H_j\cap H_i : j\in[s]\setminus Y
\}.
\]
It follows that $\A_\Sigma$ subdivides $\R^d$ into the disjoint, connected components
\[
\comp(\A_\Sigma) 
:= 
\bigcup_{Y\subset[s]}\mathfrak{R}(\A_\Sigma^Y).
\]
Since $\CC$ is the boundary complex of a convex polytope, then for each facet $F_i$ of $\CC$, there is a subset of hyperplanes $Y_i\subset \A_\Sigma$ such that for all $H\in Y_i$, the intersection $H\cap H_i$ is a facet-defining hyperplane in the affine subspace $H_i$ of the realization $\sigma_i$ of the facet $F_i$ of $\CC$.  
Let $\A_i$ denote the hyperplane arrangement $\{H\cap H_i : H\in Y_i\}\subset H$.  
Since $q_i\in H_i$, it follows that $q_i$ is contained in a unique region $C^{(i)}\in\mathfrak{R}(A_i)$.
%We will say that a line shelling $(F_1,\ldots, F_s)$ of $\CC$ induced by $\A_\Sigma$ and $\ell$ is \emph{stable} if for all $i\in[s]$ the closure of the region $C^{(i)}$ containing $q_i$ also contains a point of $F_i$.  
The following proposition notes that a line shelling of $\CC$ in which the points $q_i$ are sufficiently close to $\Sigma$ with respect to the (combinatorial) geometry of the hyperplane arrangement $\A_\Sigma$ will be stable.  
%---PROPOSITION: Stable Line Shellings Characterization---
\begin{proposition}
\label{prop: stable line shellings}
Let $\CC$ be the boundary complex of a $d$-dimensional polytope $P$ and let $(F_1,\ldots,F_s)$ be a line shelling of $\CC$ induced by $\A_\Sigma$ and a line $\ell\in\R^d$.  
Then $(F_1,\ldots, F_s)$ is stable if, for all $i\in[s]$, the closure of the region $C^{(i)}\in\mathfrak{R}(\A_i)$ containing the point $q_i$ also contains a point of the geometric realization $\sigma_i$ of $F_i$. 
\end{proposition}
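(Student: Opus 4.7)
The plan is to show that, for each $i \in [s]$, the relative complex $\RR_i$ associated to $F_i$ under the trivial subdivision coincides with one of the two reciprocal domains of the face $F$ of $F_i$ corresponding to the geometric face $\overline{C^{(i)}} \cap \sigma_i$ of $\sigma_i$. First I would verify that $\overline{C^{(i)}} \cap \sigma_i$ is indeed a face of $\sigma_i$: writing $\sigma_i = \{x \in H_i : \ell_G(x) \geq 0 \text{ for every facet $G$ of $\sigma_i$}\}$ and $\overline{C^{(i)}}$ as the closed region of $\A_i$ determined by a sign vector $(\epsilon_G)_G \in \{+,-\}^m$ on the same linear functionals $\ell_G$ (so that $\epsilon_G = -$ means $\ell_G < 0$ on $C^{(i)}$), one computes directly that
\[
\overline{C^{(i)}} \cap \sigma_i = \{x \in \sigma_i : \ell_G(x) = 0 \text{ for every $G$ with $\epsilon_G = -$}\} = \bigcap_{G : \epsilon_G = -} G,
\]
which is a face of $\sigma_i$ and is nonempty by hypothesis.

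Next I would invoke the standard description of line shellings: for facets $F_i$ encountered before the line passes through infinity (i.e., $i \leq t$), the intersection $F_i \cap (F_1 \cup \cdots \cup F_{i-1})$ is the union of the facets of $\sigma_i$ visible from $q_i$, while for $i > t$ it is the union of the covisible facets. By the definition of visibility, the facets visible from $q_i$ are precisely those $G$ with $\epsilon_G = -$. The crux is therefore the geometric claim that $\{G : \epsilon_G = -\}$ equals $A([F, F_i]^\ast)$, the set of facets of $F_i$ containing $F$. Once this is established, $\RR_i = \CC(F_i) \setminus \CC(A([F, F_i]^\ast))$ when $i \leq t$ and, by the dual argument applied to covisibility, $\RR_i = \CC(F_i) \setminus \CC(A(L(F_i)^\ast) \setminus A([F, F_i]^\ast))$ when $i > t$; in both cases $\RR_i$ is a reciprocal domain of $F$ in $F_i$, so it is stable, and hence so is the shelling.

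One inclusion of the geometric claim is immediate: if $\epsilon_G = -$, then $F = \bigcap_{G' : \epsilon_{G'} = -} G' \subseteq G$, so $G$ is a facet of $\sigma_i$ containing $F$. The main obstacle, which I expect to be the hard part, is the converse direction --- showing that every facet $G$ of $\sigma_i$ containing $F$ must satisfy $\epsilon_G = -$. I would address this via a local analysis of $\A_i$ in a small neighborhood of the relative interior of $F$: the hyperplanes of $\A_i$ containing $F$ stratify such a neighborhood into sectors, one of which is the local model of $C^{(i)}$ and another of which is the local model of the interior of $\sigma_i$. The hypothesis that $\overline{C^{(i)}} \cap \sigma_i$ is exactly $F$ (and not some larger face of $\sigma_i$) should force these two sectors to lie on opposite sides of every hyperplane of $\A_i$ passing through $F$, pinning down $\epsilon_G = -$ for every facet $G$ of $\sigma_i$ containing $F$ and completing the proof.
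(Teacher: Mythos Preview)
Your overall strategy matches the paper's: identify the face $F=\overline{C^{(i)}}\cap\sigma_i$ (the paper calls it $\sigma$), verify it is nonempty from the hypothesis, and then conclude that the set of removed facets of $\sigma_i$ coincides with $A([F,F_i]^\ast)$ (for $i\le t$) or its complement (for $i>t$). The paper simply asserts this last identification; you correctly flag the reverse inclusion---that every facet of $\sigma_i$ containing $F$ is visible from $q_i$---as the nontrivial step.

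The difficulty is that this reverse inclusion, and hence your proposed local-sector argument, is false in general. Take $\sigma_i$ to be the square pyramid with apex $v=(0,0,1)$ over the base $[-1,1]^2\times\{0\}$, so the facet inequalities are $z\ge 0$ and $\pm x+z\le 1$, $\pm y+z\le 1$. For $q_i=(3/2,0,2)$ the visible facets are the three triangles $\{x+z=1\}$, $\{y+z=1\}$, $\{-y+z=1\}$; the triangle $\{-x+z=1\}$ and the base are covisible. One checks directly that $\overline{C^{(i)}}\cap\sigma_i=\{v\}$, so the hypothesis of the proposition holds with $F=\{v\}$. Yet all four triangles pass through $v$, so $A([F,F_i]^\ast)$ has four elements while $\vis(q_i)$ has only three. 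Near $v$, the region $C^{(i)}$ and the interior of $\sigma_i$ lie on the \emph{same} side of the hyperplane $\{-x+z=1\}$; the intersection $\overline{C^{(i)}}\cap\sigma_i$ is still only $\{v\}$ because the other three triangle-hyperplanes already force equality at $v$. So the premise of your local analysis---that $\overline{C^{(i)}}\cap\sigma_i=F$ forces opposite signs at every hyperplane of $\A_i$ through $F$---does not hold.

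In this example the relative complex is nevertheless stable: the covisible facets are exactly the facets through the base edge $\tau=\{x=-1,\,z=0\}\cap\sigma_i$, so $\RR_i$ is the \emph{other} reciprocal domain, associated to $\tau$ rather than to $F$. The upshot is that choosing the correct face is subtler than taking $\overline{C^{(i)}}\cap\sigma_i$; neither your sketch nor the paper's one-line assertion addresses this.
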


\begin{proof}
We work directly with the shelling $(\sigma_1,\ldots,\sigma_s)$ of the geometric realization $\Sigma$ of $\CC$.  
Suppose that $(\sigma_1,\ldots,\sigma_s)$ is a line shelling induced by $\ell$ such that, for all $i\in[s]$, the closure of the region $C^{(i)}\in\mathfrak{R}(\A_i)$ containing the point $q_i$ also contains a point of $\sigma_i$. 
Notice first that, since $\Sigma$ is the boundary complex of a convex polytope $Q_\Sigma$, the facet-defining hyperplanes of $\sigma_i$ are given by a  subset $\{H_1,\ldots, H_M\}$ of the hyperplanes in $\A_\Sigma\setminus \{H_i\}$ in the sense that
\[
\A_i = \{G_1 := H_i\cap H_1,\ldots, G_M := H_i\cap H_M\}.
\]
Moreover, since $H_i \simeq \R^{d-1}$, each hyperplane $G\subset \A_i$ consists of the set of solutions $x\in\R^{d-1}$ to a linear equation $\langle a_{G},x\rangle = b_{G}$ for some $a_{G}\in\R^{d-1}$ and $b_{G}\in\R$.  

Since $(\sigma_1,\ldots,\sigma_s)$ is a line shelling it is induced by the ordering 
\[
(q_1,\ldots,q_t,\infty,q_{t+1},\ldots,q_s)
\]
 of points in $\R^d\cup\{\infty\}$.
%A well-known fact about line shellings is the following (see, for instance, \cite{Z12} or \cite{BM72}): 
%If $i\leq t$, then the set of facets of $\sigma_i$ that are not included in the relative complex $\RR_i$ associated to $\sigma_i$ by $(\sigma_1,\ldots,\sigma_s)$ is the collection of facets that are {\em visible} from the point $q_i$; that is, the set of all facets containing a point $q$ that is visible from $q_i$ in $H_i$ (as defined in Subsection~\ref{subsec: ehrhart theory}).  
Let $\vis(q_i)$ denote the set of facet-defining hyperplanes of $\sigma_i$ that define facets visible from $q_i$, and let $\covis(q_i)$ denote the set of facet-defining hyperplanes of all covisible facets from $q_i$. % that is, all facets of $\sigma_i$ that are not visible from $q_i$.  
%If $t>i$, then the set of facets of $\sigma_i$ that are not included in $\RR_i$ is given by the hyperplanes in $\covis(q_i)$.  

As noted in Subsection~\ref{subsec: ehrhart theory}, a point $q$ lying in a facet $\sigma$ of $\sigma_i$ defined by the hyerplane $G\in\A_i$ is visible from $q_i$ if and only if $q_i\notin T_{\sigma}(\sigma_i)$, the tangent cone of $\sigma_i$ at $\sigma$.  
Hence, $q$ is visible from $q_i$ if and only if $\langle a_G,q_i\rangle > b_G$.  
It follows that the point $q_i$ lies in the region $C^{(i)}$ of $\A_i$ consisting of all points $x\in H_i$ satisfying $\langle a_G,x\rangle > b_G$ for all $G\in \vis(q_i)$ and $\langle a_G,x\rangle < b_G$ for all $G\in\covis(q_i)$.   
Therefore, the closure of this region is all $x\in H_i$ satisfying $\langle a_G,x\rangle \geq b_G$ for all $G\in \vis(q_i)$ and $\langle a_G,x\rangle \leq b_G$ for all $G\in\covis(q_i)$.
From the inequality description of $C^{(i)}$ we know that the closure of $C^{(i)}$ contains the face $\sigma$ of $\sigma_i$ defined by $\langle a_G,x\rangle \leq b_G$ for all $G\in \vis(q_i)$. 
We claim that for all $1< i < s$, the face $\sigma$ is nonempty.%since the closure of $C^{(i)}$ is assumed to contain a point $q$ of $\sigma_i$.  

To see this, recall that we are assuming that the closure of $C^{(i)}$ contains a point $q$ of $\sigma_i$.  
Since $q$ is in the closure of $C^{(i)}$ then $\langle a_G,q\rangle \geq b_G$ for all $G\in\vis(q_i)$.  
On the other hand, since $q$ is a point in the convex polytope $\sigma_i$, which is defined as the set of solutions $x\in\R^d$ to the system of inequalities $\langle a_G, x\rangle \leq b_G$ for $G\in \A_i$, then $\langle a_G, q\rangle \leq b_G$ for all $G\in\vis(q_i)$.  
Hence, $\langle a_G, q\rangle = b_G$ for all $G\in\vis(q_i)$, and so the face $\sigma$ is nonempty. 

Now, if $1<i\leq t$, since the facets of $\sigma_i$ not included in the relative complex $\RR_i$ is precisely the set of visible facets from $q_i$, it follows that 
\[
\RR_i = \CC(\sigma_i)\setminus\CC(L([\sigma,\sigma_i]^\ast)).
\]
%Hence, $\RR_i$ is stable for all $1< i\leq t$.  
On the other hand, if $t< i< s$, then the set of facets of $\sigma_i$ not included in the relative complex $\RR_i$ is precisely the set of covisible facets $\covis(q_i)$, and so it follows that 
\[
\RR_i = \CC(\sigma_i)\setminus\CC(A(L(\sigma_i)^\ast)\setminus A([\sigma,\sigma_i]^\ast)).
\]
Hence, $\RR_i$ is stable for all $1< i <s$.  
Finally, when $i = 1$ or $i = s$, the point $q_i$, by definition of a line shelling, lies in the interior of $\sigma_i$.  
Hence, the set of visible facets from $q_i$ is empty.  
Therefore, the relative complex $\RR_1$ is simply the entire complex $\CC(\sigma_1)$, and the relative complex $\RR_s$ is $\CC(\sigma_s)\setminus\CC(\partial\sigma_s)$.  
Each of these arises as a reciprocal domain for the face $\emptyset$ of $\sigma_1$ and $\sigma_s$, respectively.  
Therefore, for all $i\in[s]$, the relative complex $\RR_i$ is stable, and we conclude that $(F_1,\ldots,F_s)$ is a stable line shelling of $\CC$. 
\end{proof}

%---FIGURE: Stable Line Shellings----
\begin{figure}
\label{fig: stable line shellings}
\centering

\subfigure[]{
\begin{tikzpicture}[scale=0.22]

	\draw[fill=blue!25] (-4,0) -- (4,0) -- (2,2) -- (-1,2) -- cycle;

	%---NODES---
	
	\node [circle, draw, fill=black!100, inner sep=1pt, minimum width=1pt] (1) at (0,2) {};
	\node [circle, draw, fill=black!100, inner sep=1pt, minimum width=1pt] (2) at (3,1) {};
	\node [circle, draw, fill=black!100, inner sep=1pt, minimum width=1pt] (3) at (-2/3,20/9) {};
	\node [circle, draw, fill=black!100, inner sep=1pt, minimum width=1pt] (4) at (6,0) {};

	%---EDGES---
 	 \draw   	 (-6,0) -- (8,0) ;
 	 \draw   	 (-7,-2) -- (5,6) ;
 	 \draw   	 (-4,2) -- (7,2) ;
	 \draw   	 (6,-2) -- (-2,6) ;
 	 \draw[<-]   	 (-6,4) -- (9,-1) ;
%	 

	%---LABELS----
	\node (L1) at (-6.5,5) {\scriptsize$\ell_1$} ;
	\node (q1) at (3.5,1.5) {\tiny$q_1$} ;
	\node (q2) at (.2,1.4) {\tiny$q_2$} ;
	\node (q3) at (-2/3-1/2,27/9) {\tiny$q_3$} ;
	\node (q4) at (6.5,-0.75) {\tiny$q_4$} ;

\end{tikzpicture}
}
\subfigure[]{	
\begin{tikzpicture}[scale=0.22]

	\draw[fill=blue!25] (-4,0) -- (4,0) -- (2,2) -- (-1,2) -- cycle;

	%---NODES---
	
	\node [circle, draw, fill=black!100, inner sep=1pt, minimum width=1pt] (1) at (0,0) {};
	\node [circle, draw, fill=black!100, inner sep=1pt, minimum width=1pt] (1) at (1,2) {};
	\node [circle, draw, fill=black!100, inner sep=1pt, minimum width=1pt] (3) at (4/3,8/3) {};
	\node [circle, draw, fill=black!100, inner sep=1pt, minimum width=1pt] (4) at (2,4) {};

	%---EDGES---
 	 \draw   	 (-6,0) -- (8,0) ;
 	 \draw   	 (-7,-2) -- (5,6) ;
 	 \draw   	 (-4,2) -- (7,2) ;
	 \draw   	 (6,-2) -- (-2,6) ;
 	 \draw[->]   (-2,-4) -- (3,6) ;
%	 

	%---LABELS----
	\node (L2) at (3.5,6.5) {\scriptsize$\ell_2$} ;
	\node (q1) at (0.5,-0.75) {\tiny$q_1$} ;
	\node (q2) at (1.5,1.5) {\tiny$q_2$} ;
	\node (q3) at (4/3+1,8/3) {\tiny$q_3$} ;
	\node (q4) at (3,3.9) {\tiny$q_4$} ;

\end{tikzpicture}
}
\subfigure[]{
\begin{tikzpicture}[scale=0.22]

	\draw[fill=blue!25] (-4,0) -- (4,0) -- (2,2) -- (-1,2) -- cycle;

	%---NODES---
	
	\node [circle, draw, fill=black!100, inner sep=1pt, minimum width=1pt] (1) at (0,0) {};
	\node [circle, draw, fill=black!100, inner sep=1pt, minimum width=1pt] (1) at (3,1) {};
	\node [circle, draw, fill=black!100, inner sep=1pt, minimum width=1pt] (3) at (6,2) {};
	\node [circle, draw, fill=black!100, inner sep=1pt, minimum width=1pt] (4) at (-8,-8/3) {};

	%---EDGES---
 	 \draw   	 (-6,0) -- (8,0) ;
 	 \draw   	 (-10,-4) -- (5,6) ;
 	 \draw   	 (-4,2) -- (7,2) ;
	 \draw   	 (6,-2) -- (-2,6) ;
 	 \draw[->]   	 (-11,-8/3-1) -- (8,8/3) ;
%	 

	%---LABELS----
	\node (L2) at (7.9,8/3+1) {\scriptsize$\ell_3$} ;
	\node (q1) at (0.5,-0.75) {\tiny$q_1$} ;
	\node (q2) at (4,.75) {\tiny$q_2$} ;
	\node (q3) at (6,2.75) {\tiny$q_3$} ;
	\node (q4) at (-7.25,-8/3-.5) {\tiny$q_4$} ;

\end{tikzpicture}
}

\vspace{-0.2cm}
\caption{Some examples of (stable) line shellings.}
\end{figure}
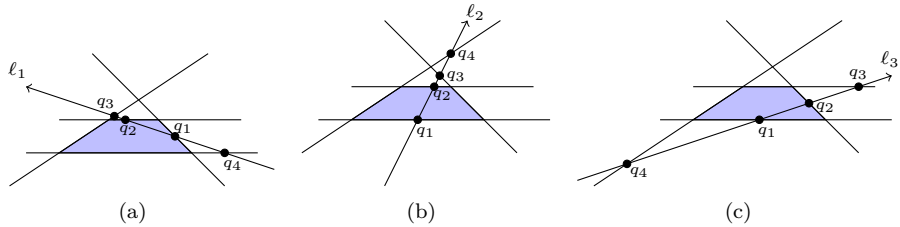
%---EXAMPLE:  Stable Line Shellings----
\begin{example}
\label{ex: stable line shellings}
In Figure~\ref{fig: stable line shellings}, we see three different lines $\ell_1,\ell_2,$ and $\ell_3$ in $\R^2$ that induce line shellings of the quadrilateral defined by the given arrangement of facet-defining hyperplanes. 
The blue quadrilateral is the polytope $Q_\Sigma$ with respect to the embedding $\Sigma$ of the boundary complex of the quadrilateral.  
The four unlabeled lines in each figure constitute the hyperplane arrangement $\A_\Sigma$.  
Each of these line shellings is stable because it satisfies the conditions of Proposition~\ref{prop: stable line shellings}.  
This can also be seen by Proposition~\ref{prop: stable shellings of simplicial complexes} and the fact that all $2$-dimensional polytopes are simplicial.
%As we will see in the following, this is because the boundary complex of the polytope in question (the blue quadrilateral) is a simplicial complex.  

These small examples help us to see some of the intricacies of stable line shellings.  
The line shellings given by Figure~\ref{fig: stable line shellings}~(a) and~(c) are seen to be stable since each point $q_i$ for $i\in\{1,2,3,4\}$ lies in the closure of a region of the hyperplane arrangement $\A_\Sigma$ containing a face of $\sigma_i$ (the facet of $Q_\Sigma$ on the line in $\A_\Sigma$ containing the point $q_i$).  
Hence, each point $q_i$ must also lie in a region of $\A_i$ whose closure contains a point of $\sigma_i$, and so stability follows by Proposition~\ref{prop: stable line shellings}.  
On the other hand, this same reasoning does not apply when deducing that the line shelling given by Figure~\ref{fig: stable line shellings}~(b) is stable.  
Here, the point $q_4$ lies in the closure of a region of $\A_\Sigma$ that does not contain a point of $\sigma_4$.  
This is because the point $q_4$ is separated from $\sigma_4$ by the hyperplane that defines the facet $\sigma_3$.  
However, this line shelling is still stable by Proposition~\ref{prop: stable line shellings}, since we do not include the hyperplane $H_3\cap H_4$ in the arrangement $\A_4$.  

Figure~\ref{fig: stable line shellings}~(a) and~(c) show that there are line shellings that can be deduced to be stable by using the fact that each point $q_i$ is contained in the closure of a region of $\A_\Sigma$ that also contains a point of $F_i$.  
We can call such line shellings \emph{strongly stable}, since the shelling can be deduced to be stable via the global geometry of the arrangement $\A_\Sigma$, without restricting to the induced arrangements $\A_i$.  
In fact, reflecting on Example~\ref{ex: d-cube line}, we see that all line shellings of the boundary complex of the $d$-dimensional cube are strongly stable. 
\end{example}

In general, the authors hope that the boundary complex $\CC$ of every cubical polytope admits a stable line shelling induced by some realization $\Sigma\in R_\CC$ and line $\ell\in \R^d$ that is sufficiently close to $\Sigma$, in the sense of Proposition~\ref{prop: stable line shellings}.
%---CONJECTURE: Stable Line Shelling----
\begin{conjecture}
\label{conj: stable line shelling}
The boundary complex of any cubical polytope admits a stable line shelling.
\end{conjecture}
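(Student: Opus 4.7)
The plan is to establish Conjecture~\ref{conj: stable line shelling} by producing, for any cubical polytope $P$ of dimension $d$, a realization $\Sigma \in R_{\CC(\partial P)}$ together with a line $\ell \subset \R^d$ whose Bruggesser-Mani line shelling $(F_1,\ldots,F_s)$ satisfies the sufficient condition of Proposition~\ref{prop: stable line shellings}: for each $i$, the closure of the region $C^{(i)} \in \mathfrak{R}(\A_i)$ containing $q_i$ meets the geometric facet $\sigma_i$. Existence of \emph{some} line shelling is guaranteed by \cite{BM72}; the task is to arrange the pair $(\Sigma,\ell)$ so that each intersection point $q_i = \ell \cap H_i$ avoids the regions of the facet-hyperplane arrangement $\A_i$ whose closures miss $\sigma_i$.

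The first step I would carry out is to show that the closure condition is automatic whenever each cubical facet $\sigma_i$ is realized with all $d-1$ abstract opposing pairs of its sub-facets lying in parallel hyperplanes. In that case $\sigma_i$ is an intersection of $d-1$ parallel slabs $\{a_j \leq \langle v_j, x\rangle \leq b_j\}$ with $v_1,\ldots,v_{d-1}$ a basis of $H_i^\ast$, and every region of $\A_i$ is specified by an independent per-index choice of ``below,'' ``between,'' or ``above'' slab $j$. Setting $\langle v_j, x\rangle$ to $a_j$ (resp.\ $b_j$) in each ``below'' (resp.\ ``above'') coordinate and to any value in $[a_j,b_j]$ in the ``between'' coordinates produces a point of $\sigma_i$ in the closure of the given region. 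Consequently, if $P$ admits a realization in which every cubical facet has parallel opposing sub-facets, then \emph{every} line shelling of $\CC(\partial P)$ is stable; this would recover the stable shellings of the cuboids, capped cubical polytopes, and neighborly cubical polytopes from Section~\ref{sec: apps} as immediate corollaries.

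The hard part is the general case, in which a cubical facet may be a warped combinatorial cube whose opposing sub-facets lie in non-parallel hyperplanes (for instance, a frustum-shaped $3$-cube). The arrangement $\A_i$ then admits wedge regions of the form $\{\langle v,x\rangle < a,\ \langle w,x\rangle < b\}$ with $v$ and $w$ not parallel, whose closures meet $\sigma_i$ only in the empty intersection of two opposing sub-facets; any $q_i$ landing in such a wedge violates the closure condition, and the set of visible sub-facets of $\sigma_i$ from $q_i$ then contains an opposing pair, obstructing stability via Lemma~\ref{lem: opposing pairs}. My proposed strategy here is two-pronged: first, to attempt to show that every combinatorial cubical polytope admits a projective realization in which all opposing sub-facets of every cubical facet are parallel, which would immediately reduce to the previous paragraph; second, if such realizations do not always exist, to select $\ell$ via a rocket shelling that hugs $\partial P$ so that each $q_i$ lies in a region of $\A_i$ adjacent to $\sigma_i$, avoiding all bad wedges by proximity. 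The main technical obstacle will be to guarantee this second approach simultaneously for all $s$ facets, and particularly for the half of the shelling after $\ell$ has passed through infinity, where the points $q_i$ lie on the far side of $P$; a compactness or genericity argument inside $R_{\CC(\partial P)}$, possibly combined with an induction on the dimension or face lattice of $P$, seems likely to be required.
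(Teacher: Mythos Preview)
The statement you are addressing is a \emph{conjecture} in the paper, not a theorem; the paper offers no proof of it. Conjecture~\ref{conj: stable line shelling} is posed at the end of Section~\ref{sec: stable line shellings} as an open problem, immediately following Proposition~\ref{prop: stable line shellings} and Example~\ref{ex: stable line shellings}, and the paper closes by remarking that even the question of strongly stable line shellings remains open. So there is nothing to compare your attempt against.

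Your proposal is, appropriately, a strategy outline rather than a proof, and you are candid about this. Your first reduction is correct and worth recording: if every cubical facet $\sigma_i$ is realized as an affine cube (all opposing pairs of sub-facets parallel), then every region of $\A_i$ has closure meeting $\sigma_i$, so Proposition~\ref{prop: stable line shellings} applies to \emph{any} line shelling. This is exactly the mechanism behind Example~\ref{ex: d-cube line}, and it does recover the special cases of Section~\ref{sec: apps}. The genuine gap is precisely where you locate it: for a warped combinatorial cube, the arrangement $\A_i$ contains regions whose closures miss $\sigma_i$, and a point $q_i$ in such a region yields a visible set containing an opposing pair, which by Lemma~\ref{lem: opposing pairs} is fatal to stability. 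Neither of your two proposed remedies is carried out. The first (a projective realization with all opposing sub-facets parallel in every facet) is almost certainly false in general---combinatorial cubical polytopes need not admit realizations with affine-cube facets, and this is related to the scarcity of explicit constructions noted in \cite{JZ00,S04}. The second (choosing $\ell$ to hug $\partial P$ so that each $q_i$ lands in a region adjacent to $\sigma_i$) is the more promising direction, but as you note, controlling all $s$ intersection points simultaneously, including those occurring after $\ell$ passes through $\infty$, is the crux, and no argument is supplied. As written, this is a reasonable research plan for attacking an open conjecture, not a proof.
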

It would also be of interest to know whether or not the boundary complex of every cubical or simplicial (or otherwise) polytope admits a strongly stable line shelling.

\bigskip

\noindent
{\bf Acknowledgements}. 
Max Hlavacek was supported by The Chancellor's Fellowship of the University of California, Berkeley.  
Liam Solus was supported by an NSF Mathematical Sciences Postdoctoral Research Fellowship (DMS - 1606407), a Starting Grant (No. 2019-05195) from Vetenskapsr\aa{}det, and the Wallenberg AI, Autonomous Systems and Software Program (WASP) funded by the Knut and Alice Wallenberg Foundation. 
The authors would like to thank Matthias Beck, Katharina Jochemko, and Joseph Doolittle for helpful discussions.  
This work began while the authors were attending the Workshop on \emph{Hyperbolic Polynomials and Hyperbolic Programming} at the Simons Institute during the semester program on \emph{Geometry of Polynomials}, it continued while authors were at the {\em 2019 Workshop on Commutative Algebra and Lattice Polytopes} at RIMS in Kyoto, Japan, and when both authors were visiting Freie Universit\"at Berlin. 
It was completed while the second author was participating in the Semester Program on \emph{Enumerative and Algebraic Combinatorics} at Institut Mittag-Leffler in Stockholm, Sweden, and while the first author was participating in the {\em Combinatorial Coworkspace} at Kleinwalsertal, Austria.  
The authors would like to thank the organizers of all of these events.

%\begin{appendix}
%\section{Background on Lehmer Codes}
%\label{appsec: background on lehmer codes}
%
%
%\end{appendix}
\end{document}